\theoremstyle{plain}
\newtheorem{theorem}{Theorem}[section]
\newtheorem{question}[theorem]{Question}
\newtheorem{lemma}[theorem]{Lemma}
\newtheorem{corollary}[theorem]{Corollary}
\newtheorem{proposition}[theorem]{Proposition}
\theoremstyle{definition}
\newtheorem{definition}[theorem]{Definition}
\theoremstyle{remark}
\newtheorem{remark}[theorem]{Remark}
\DeclareMathOperator{\Iso}{Iso}
\DeclareMathOperator{\Neigh}{\mathcal{U}}
\DeclareMathOperator{\Samuel}{S}
\DeclareMathOperator{\Mean}{M}
\DeclareMathOperator{\Meanu}{M_{\it u}}
\DeclareMathOperator{\R}{\mathbb{R}}
\DeclareMathOperator{\Pfin}{\mathcal{P}_{fin}}
\DeclareMathOperator{\LUEB}{LUEB}
\DeclareMathOperator{\LUCB}{LUCB}
\DeclareMathOperator{\RUEB}{RUEB}
\DeclareMathOperator{\RUCB}{RUCB}
\DeclareMathOperator{\UEB}{UEB}
\DeclareMathOperator{\UCB}{UCB}
\DeclareMathOperator{\UMF}{UMF}
\DeclareMathOperator{\UU}{U}
\DeclareMathOperator{\spt}{spt}
\DeclareMathOperator{\diam}{diam}
\DeclareMathOperator{\Lip}{Lip}
\DeclareMathOperator{\Prob}{P}
\newcommand{\norm}[1]{\left\Vert #1 \right\Vert}
\newcommand{\defeq}{\mathrel{\mathop:}=}                         
\newcommand{\mc}{\mathcal}
\newcommand{\lacts}{\curvearrowright}
\def\dim{{\rm dim}}
\def\R{\mathbb{R}}
\def\N{\mathbb{N}}
\newcommand{\eps}{\varepsilon}
\newcommand{\eqa}[1]{
\begin{align*}
#1
\end{align*}
}
\newcommand{\ip}[2]{\langle #1,#2\rangle}
\author[V. Alekseev]{Vadim Alekseev}
\address{V.A., TU Dresden, 01062 Dresden, Germany}
\email{vadim.alekseev@tu-dresden.de}
\author[H. Ando]{Hiroshi Ando}
\address{H.A., Chiba University, 1-33 Yayoi-cho, Inage, Chiba, 263-8522,
Japan}
\email{hiroando@math.s.chiba-u.ac.jp}
\author[F.M. Schneider]{Friedrich Martin Schneider}
\address{F.M.S., TU Bergakademie Freiberg, Prüferstraße 1, 09596 Freiberg, Germany}
\email{martin.schneider@math.tu-freiberg.de}
\author[A. Thom]{Andreas Thom}
\address{A.T., TU Dresden, 01062 Dresden, Germany}
\email{andreas.thom@tu-dresden.de}
\title[Amenability and skew-amenability of actions]{Amenability and skew-amenability of actions of topological groups}
\subjclass[2020]{22A10, 43A07, 22D99}
\keywords{Amenability, exactness, topological groups}
\begin{document}

\begin{abstract} 
We define and study notions of amenability and skew-amenability of continuous actions of topological groups on compact topological spaces. Our main motivation is the question under what conditions amenability of a topological group passes to a closed subgroup. Other applications include the understanding of the universal minimal flow of various non-amenable groups.
\end{abstract}

\dedicatory{Dedicated to the memory of Anatoly Vershik.}

\maketitle

\tableofcontents

\section{Introduction}

The notion of amenability of topological groups beyond the locally compact setting has some surprises in store (see~\cite{PestovBook,GrigorchukDeLaHarpe} for comprehensive accounts). A topological group $G$ is called \emph{amenable} if there exists a left-invariant mean on the algebra $\RUCB(G)$ of bounded right-uniformly continuous functions on $G$. The Gel'fand spectrum of the C$^*$-algebra $\RUCB(G)$ is denoted by $\Samuel(G)$ and called the Samuel compactification of $G$. It has been observed from the start that an equally interesting definition would ask for a left-invariant mean on the algebra $\LUCB(G)$ of bounded left-uniformly continuous functions. This notion is called \emph{skew-amenability} of $G$ and has been studied by various authors \cite{Greenleaf, JuschenkoSchneider, MR4206535}. Both conditions are known to be equivalent if the group $G$ is locally compact by work of Greenleaf \cite{Greenleaf} or SIN by definition. If $G$ is not locally compact, these conditions are different and it is now known that neither implies the other, see \cite{ozawa} for the first example of a skew-amenable group that is not amenable. 

Many properties, familiar from the locally compact setting, of the class of (skew-)amenable groups fail to hold in general: for example, those classes are no longer closed under taking closed subgroups. This fails even for SIN groups, see \cite{alekseev2023amenability, MR3795479}. In this context, it is a longstanding open problem to decide whether closed subgroups of $\UU(R)$, $R$ the hyperfinite II$_1$-factor, are amenable -- or more specifically, whether $\UU(R)$ admits a discrete free subgroup.

In order to identify natural conditions on a subgroup $H \leq G$ that imply that amenability passes from $G$ to $H$, we are naturally led to a new definition of \emph{amenability} and \emph{skew-amenability of actions} (see Section~\ref{sec:amenable}) of general topological groups on compact spaces. The aim of the present work is to provide a systematic exploration of these notions. These concepts extend the notions of amenability of actions in the discrete case, but the locally compact case needs some careful comparison with the existing definition.  We show (see Section~\ref{sec:comparison}) that an action of a second-countable locally compact group is amenable in the sense of Anantharaman-Delaroche \cite{MR1926869Delaroche} if and only if it is amenable or skew-amenable in our sense, thus all reasonable notions agree is the familiar setting of second-countable locally groups and their actions. This also covers the classical result of Greenleaf \cite{Greenleaf} on equivalence of amenability and skew-amenability for the group itself.

A related notion is the notion of exactness of a locally compact group as introduced in the work of Kirchberg--Wassermann~\cite{KW95,KW99-2, KW99-1} for locally compact groups, see also the work of Brodzki--Cave--Li~\cite{MR3635811}. We define a topological group to be (skew-)exact if it admits an (skew-)amenable continuous action on a compact space.
Following Monod, we say that $H\leq G$ is \emph{amenably embedded} if the action of $H$ on $\Samuel(G)$ is amenable. We call $H \leq G$ \emph{well-placed} if there exists an $H$-left-equivariant, right-uniformly continuous map $\mu \colon G \to \Meanu(H)$, extending the inclusion of $H$, where $\Meanu(H)$ is the space of \emph{uniform measures} on $H$ (see Section~\ref{section:preliminaries} for details). 
One of our main results (see Section \ref{sec:emb}) is that if $H\leq G$ is either amenably embedded or well-placed, then amenability of $G$ implies amenability of $H$. Some of these notions and results have skew versions of equal importance.

Our second main application (see Section \ref{sec:UMF}) of the notion of amenability of actions is to the study of the universal minimal flow of particular topological groups. We show that if the universal minimal flow $\UMF(G)$ of a Polish group $G$ is metrizable, then the action $G \curvearrowright \UMF(G)$ is amenable. This is particularly interesting in cases where $G$ is non-amenable.

We conclude the paper in the last section with further examples and applications.

\section{Uniform spaces and the UEB topology}\label{section:preliminaries}

In this preliminary section we compile some relevant background concerning function spaces associated with uniform spaces. Throughout the entire paper, compact will always include being Hausdorff.

For a start, we briefly clarify some general notation. Let $X$ be a set. We let $\mathcal{P}(X)$ denote the power set of $X$ and we let $\Pfin(X)$ denote the set of all finite subsets of $X$. For any map $\mu \in \R^{X}$, we let $\spt(\mu) \defeq \mu^{-1}(\R\setminus \{ 0 \})$. We consider \begin{displaymath}
	\R[X] \, \defeq \, \left. \! \left\{ \mu \in \R^{X} \right\vert \, \spt(\mu) \text{ finite} \right\} \, \subseteq \, \ell^{1}(X)
\end{displaymath} and \begin{displaymath}
	\Delta (X) \, \defeq \, \{ \mu \in \R[X] \mid \mu \geq 0, \, \Vert \mu \Vert_{1} = 1 \} . 
\end{displaymath} For $\mu \in \ell^{1}(X)$ and $f \in \ell^{\infty}(X)$, let \begin{displaymath}
	\langle \mu,f \rangle \, \defeq \, \sum\nolimits_{x \in X} \mu(x)f(x) .
\end{displaymath} If $G$ is a group, then we define \begin{align*}
	\lambda_{g} &\colon \, G \, \longrightarrow \, G, \quad x \, \longmapsto \, gx , \\
    \rho_{g} &\colon \, G \, \longrightarrow \, G, \quad x \, \longmapsto \, xg 
\end{align*} for every $g \in G$, and we recall that $G$ acts on $\Delta(G)$ by \begin{displaymath}
	g \mu \, \defeq \, \mu \circ {\lambda_{g^{-1}}} \qquad (g \in G, \, \mu \in \Delta(G)) .
\end{displaymath} Furthermore, if $T$ is a Hausdorff topological space, then we let $\Prob(T)$ denote the set of all inner regular Borel probability measures on $T$. Of course, if $T$ is a discrete topological space, then there exists a natural bijection between $\Prob(T)$ and the set $\{ \mu \in \ell^{1}(T) \mid \mu \geq 0, \, \Vert \mu \Vert_{1} = 1 \}$.

We now turn to functional-analytic aspects of uniform spaces, as developed by Pachl~\cite{PachlBook}. Let $X$ be a uniform space. Then $\UCB(X)$ denotes the commutative unital Banach algebra of all uniformly continuous bounded real-valued functions on $X$. The \emph{Samuel compactification} of $X$, denoted by $\Samuel(X)$, is defined as the compact space of all unital ring homomorphisms from $\UCB(X)$ to $\R$, equipped with the weak-$\ast$ topology, i.e., the Gel'fand spectrum of $\UCB(X)$. Moreover, we define $\Mean(X)$ as the compact space of all means (i.e., positive unital linear forms) on $\UCB(X)$, again equipped with the weak-$\ast$ topology. Note that $\Mean(X)$ is naturally homeomorphic to the space of inner regular Borel probability measures on $\Samuel(X)$. Following~\cite[Definition~1.19]{PachlBook}, a subset $B \subseteq \UCB(X)$ will be called \emph{UEB} (short for \emph{uniformly equicontinuous, bounded}) if \begin{itemize}
	\item[---\,] $B$ is bounded in the supremum norm, and
	\item[---\,] $B$ is uniformly equicontinuous, i.e., for each $\varepsilon \in \R_{>0}$ there is an entourage $U$ of $X$ such that \begin{displaymath}
		\qquad \forall f \in B \ \forall (x,y) \in U \colon \qquad \vert f(x) - f(y) \vert \, \leq \, \varepsilon .
	\end{displaymath}
\end{itemize} The set of all UEB subsets of $\UCB(X)$, denoted by $\UEB(X)$, constitutes a convex vector bornology on $\UCB(X)$. The induced locally convex topology on $\UCB(X)^{\ast}$, i.e., the one generated by the set of pseudo-norms of the form \begin{displaymath}
	p_{B} \colon \, \UCB(X)^* \, \longrightarrow \, \R_{\geq 0}, \quad \mu \, \longmapsto \, \sup\nolimits_{f \in B} \vert \langle \mu,f \rangle \vert
\end{displaymath}
where $B \in \UEB(X)$, is called the \emph{UEB uniformity}. We will not distinguish in notation between the set $\Delta(G)$ and its image under the map \begin{displaymath}
    \Delta (X) \, \longrightarrow \, \Mean(X), \quad \mu \, \longmapsto \, [f \mapsto \langle \mu,f\rangle]
\end{displaymath} (although this map is injective if and only if $X$ is Hausdorff). We define $\Meanu(X)$ to be the closure of $\Delta(X)$ in $\UCB(X)^{\ast}$ with respect to the UEB topology. A mean $\mu \in \Mean(X)$ belongs to $\Meanu(X)$ if and only if, for every $B \in \UEB(X)$, the restriction $\mu\vert_{B} \colon B \to \R$ is continuous with respect to the topology of pointwise convergence on $B$ (see~\cite[Theorem~6.6(2)]{PachlBook}). In~\cite{PachlBook}, the elements of $\Meanu(X)$ are called \emph{uniform measures}.

\begin{remark}\label{remark:weak.star.vs.ueb.topology} Let $X$ be any uniform space. On the set $\Meanu(X)$, the UEB topology coincides with the weak-$\ast$ topology inherited from $\UCB(X)^{\ast}$. This follows by~\cite[Corollary 6.13]{PachlBook}, or alternatively by~\cite[Corollary 6.15]{PachlBook}. \end{remark}

If $X$ is a Hausdorff uniform space, then \begin{displaymath}
    \Prob(X) \, \longrightarrow \, \Meanu(X), \quad \mu \, \longmapsto \, \left[f \mapsto \int f \, \mathrm{d} \mu\right]
\end{displaymath} is a well-defined injection~\cite[Lemma~5.2, Theorem~5.3]{PachlBook}, and we will view $\Prob(X)$ as a subset of $\Meanu(X)$ by means of this map.

\begin{remark}\label{remark:discrete.uniformity} If $X$ is a discrete uniform space, then $\Meanu(X) = \Prob(X)$ and the UEB uniformity on $\Prob(X)$ coincides with the one generated by $\Vert \cdot \Vert_{1}$. For instance, this follows by~\cite[Lemma~5.2, Theorem 5.28]{PachlBook}. \end{remark}

Finally, let $G$ be a topological group -- throughout this manuscript, all topological groups are assumed to be Hausdorff. Then $\Neigh(G)$ denotes the neighborhood filter at the neutral element $e \in G$. The uniformity \begin{displaymath}
	\left\{ E \subseteq G \times G \left\vert \, \exists U \in \Neigh(G) \, \forall x,y \in G \colon \, xy^{-1}\! \in U \Longrightarrow \, (x,y) \in E \right\} \right.\! 
\end{displaymath} is called the \emph{right uniformity} of $G$, and we let $G_{\Rsh}$ denote the uniform space consisting of the set $G$ equipped with that uniformity. We observe that a function $f \in \ell^{\infty}(G)$ belongs to $\RUCB(G) \defeq \UCB(G_{\Rsh})$ if and only if \begin{displaymath}
	\forall \varepsilon \in \R_{>0} \, \exists U \in \Neigh(G) \, \forall u \in U\colon \quad \Vert f-(f\circ \lambda_{u}) \Vert_{\infty} \leq \varepsilon .
\end{displaymath} In general, a norm-bounded set $B \subseteq \ell^{\infty}(G)$ belongs to $\RUEB (G) \defeq \UEB (G_{\Rsh})$ if and only if \begin{displaymath}
	\forall \varepsilon \in \R_{>0} \, \exists U \in \Neigh(G) \ \forall u \in U \ \forall f \in B \colon \quad \Vert f-(f\circ \lambda_{u}) \Vert_{\infty} \leq \varepsilon .
\end{displaymath} Analogously, \begin{displaymath}
	\left\{ E \subseteq G \times G \left\vert \, \exists U \in \Neigh(G) \, \forall x,y \in G \colon \, x^{-1}y\! \in U \Longrightarrow \, (x,y) \in E \right\} \right.\! 
\end{displaymath} is called the \emph{left uniformity} of $G$, and $G_{\Lsh}$ denotes the uniform space consisting of the set $G$ furnished with the left uniformity. Similarly as above, a function $f \in \ell^{\infty}(G)$ belongs to $\LUCB(G) \defeq \UCB(G_{\Lsh})$ if and only if \begin{displaymath}
	\forall \varepsilon \in \R_{>0} \, \exists U \in \Neigh(G) \, \forall u \in U\colon \quad \Vert f-(f\circ \rho_{u}) \Vert_{\infty} \leq \varepsilon ,
\end{displaymath} while a norm-bounded subset $B \subseteq \ell^{\infty}(G)$ belongs to $\LUEB (G) \defeq \UEB (G_{\Lsh})$ if and only if \begin{displaymath}
	\forall \varepsilon \in \R_{>0} \, \exists U \in \Neigh(G) \ \forall u \in U \ \forall f \in B \colon \quad \Vert f-(f\circ \rho_{u}) \Vert_{\infty} \leq \varepsilon .
\end{displaymath}

We conclude this section with a clarifying remark about UEB topologies for metrizable topological groups. By a \emph{compatible} metric on a metrizable topological space we mean one that generates the topology of the latter.

\begin{remark}\label{remark:mass.transportation} Let $G$ be a metrizable topological group. If $d$ is any right-invariant (resp., left-invariant) compatible bounded metric on $G$, then the UEB uniformity and the uniformity generated by the metric \begin{displaymath}
    d_{\mathrm{MT}} (\mu,\nu) \, \defeq \, \sup \{ \vert \mu(f) - \nu(f) \vert \mid f \in \Lip_{1}(G,d) \}
\end{displaymath} coincide on $\Mean(G_{\Rsh})$ (resp., $\Mean(G_{\Lsh})$). A proof can be found in~\cite[Lemma~2.6]{MR4152622}. \end{remark}

\section{Amenability of actions and exactness of topological groups} \label{sec:amenable}

A notion of amenability of actions of locally compact groups on compact topological spaces has been introduced by Claire Anantharaman-Delaroche; deviating from the existing literature we call this notion AD-amenability. The definition (in one of many equivalent forms) goes as follows:

\begin{definition}[Anantharaman-Delaroche]\label{definition:ad.amenability}
Let $G$ be a locally compact group equipped with a left Haar measure $\nu$ and let \begin{displaymath}
    \Prob(G,\nu) \, \defeq \, \left\{ \varphi \in L^{1}(G,\nu) \, \right\vert \varphi \geq 0, \, \Vert \varphi \Vert_{\nu,1} = 1 \} .
\end{displaymath} A continuous action of $G$ on a compact space $X$ is called \emph{AD-amenable} if there exists a net of $\sigma(C_0(G)^*,C_0(G))$-continuous maps $\mu_{\iota} \colon X \to \Prob(G,\nu)$ $(\iota \in I)$ such that \begin{displaymath}
    \sup\nolimits_{x \in X}\lVert g \mu_{\iota}(x) - \mu_{\iota}(gx)\rVert_{\nu,1}  \, \longrightarrow \, 0 \quad (\iota \to I)
\end{displaymath} uniformly on compact subsets of $G$.
\end{definition}

Deviating again from the existing literature, we will now define a new notion of amenability of actions for general topological groups. This definition has a skew version which turns out to be equivalent to AD-amenability for actions of locally compact groups (see Theorem \ref{theorem:ad.vs.skew}), and if one additionally assumes the locally compact group to be second-countable, then all three notions become equivalent (see Theorem \ref{theorem:amenable.implies.AD-amenable}).

\begin{definition}\label{def:amenableaction} A continuous action of a topological group $G$ on a compact space space $X$ is called \emph{amenable} if there is a net of continuous maps $\mu_{\iota} \colon X \to \Meanu(G_{\Rsh})$ $(\iota \in I)$ such that, for every $g \in G$ and every $B \in \RUEB(G)$, \begin{displaymath}
    \sup\nolimits_{x \in X} \sup\nolimits_{f \in B} \vert (g\mu_{\iota}(x))(f) - \mu_{\iota}(gx)(f) \vert \, \longrightarrow \, 0 \quad (\iota \to I) .
\end{displaymath} Such a net will then be called a \emph{witness of amenability} for the action. A topological group is called \emph{exact} (or \emph{amenable at infinity}) if it admits an amenable continuous action on some non-empty compact space. \end{definition}

\begin{remark} Equivalently, a topological group $G$ is exact if and only if it acts amenably on its Samuel compactification $\Samuel(G_{\Rsh})$, resp.~its universal minimal flow $\UMF(G)$.
\end{remark}

To avoid repetition in later arguments, let us record the following abstract observation. Recall that, for any topological space $X$ and any topological vector space $E$, the algebraic tensor product $C(X) \otimes E$ naturally embeds into $C(X,E)$, where \begin{displaymath}
    f \otimes z \colon \, X \, \longrightarrow \, E, \quad x \, \longmapsto \, f(x)z
\end{displaymath} for all $f \in C(X)$ and $z \in E$.

\begin{lemma}\label{lemma:abstract.approximation} Let $X$ be a compact space an let $E$ be a locally convex space. Let $D \subseteq E$ and $K \defeq \overline{D}$. Then \begin{displaymath}
    \left\{ \sum\nolimits_{i=1}^{n} f_{i} \otimes z_{i} \left\vert \, n \in \N_{>0}, z_{1},\ldots,z_{n} \in D, \, f_{1},\ldots,f_{n} \in C(X,[0,1]), \, \sum\nolimits_{i=1}^{n} f_{i} = 1 \right\} \right.
\end{displaymath} is dense in $C(X,K)$ with respect to the topology of uniform convergence. \end{lemma}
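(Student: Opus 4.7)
The plan is to reduce to approximating, for a fixed $\varphi \in C(X,K)$, a single continuous seminorm $p$ on $E$ and a tolerance $\varepsilon > 0$. Since a net of continuous $E$-valued maps converges uniformly precisely when for every continuous seminorm $p$ on $E$ the supremum $\sup_{x \in X} p(\varphi(x) - \psi(x))$ can be made arbitrarily small, it suffices to produce, for each such $p$ and $\varepsilon$, an element $\psi$ of the prescribed form with $\sup_{x \in X} p(\varphi(x) - \psi(x)) < \varepsilon$.

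The first ingredient is compactness of $\varphi(X) \subseteq K$, combined with density of $D$ in $K$. Using these, I would cover $\varphi(X)$ by finitely many open $p$-balls $B_i \defeq \{ y \in E \mid p(y - z_i) < \varepsilon/2 \}$ with centers $z_1, \ldots, z_n \in D$ (pick such $z_i$ near each point of $\varphi(X)$ using density, then extract a finite subcover by compactness). Pulling back along $\varphi$ yields an open cover $W_i \defeq \varphi^{-1}(B_i)$ of $X$.

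The second ingredient is the existence of a partition of unity subordinate to a finite open cover of the compact Hausdorff space $X$; this gives $f_1, \ldots, f_n \in C(X,[0,1])$ with $\supp f_i \subseteq W_i$ and $\sum_{i=1}^{n} f_i = 1$. Setting $\psi \defeq \sum_{i=1}^{n} f_i \otimes z_i$, the candidate has exactly the form required by the statement of the lemma.

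Finally I would estimate: for each $x \in X$, using $\sum_i f_i(x) = 1$ and subadditivity plus positive homogeneity of $p$,
\begin{displaymath}
    p(\varphi(x) - \psi(x)) \, = \, p\!\left( \sum\nolimits_{i=1}^{n} f_{i}(x) (\varphi(x) - z_{i}) \right) \, \leq \, \sum\nolimits_{i=1}^{n} f_{i}(x) \, p(\varphi(x) - z_{i}) .
\end{displaymath}
Whenever $f_i(x) \neq 0$ we have $x \in W_i$, hence $\varphi(x) \in B_i$ and $p(\varphi(x) - z_i) < \varepsilon/2$; thus the right-hand side is bounded by $\varepsilon/2 < \varepsilon$, uniformly in $x$. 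There is no genuine obstacle here beyond keeping the setup clean: the proof is essentially the standard partition-of-unity argument for uniform approximation of continuous vector-valued functions by $C(X) \otimes E$-elements, adapted to ensure that the coefficient vectors stay inside $D$ and that the scalar coefficients form a genuine convex partition of unity.
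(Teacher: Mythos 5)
Your proof is correct and follows essentially the same route as the paper: a finite cover obtained from compactness and density of $D$, a subordinate partition of unity, and a convexity estimate on the resulting convex combination. The only (cosmetic) difference is that you phrase the approximation in terms of continuous seminorms, whereas the paper works directly with a convex zero neighborhood $U$ of $E$; in a locally convex space these formulations are equivalent.
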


\begin{proof} Let $\psi \in C(X,K)$ and let $U$ be a convex zero neighborhood in $E$. Since $X$ is compact, there exist a finite open cover $\mathcal{V}$ of $X$ and a map $h \colon \mathcal{V} \to D$ such that $\psi (x) \in U + h(V)$ for all $x \in V$ and $V \in \mathcal{V}$. Choose any continuous partition of the unity subordinate to $\mathcal{V}$, i.e., a family of continuous functions $f_{V} \colon X \to [0,1]$ ($V \in \mathcal{V}$) with $\sum_{V \in \mathcal{V}} f_{V} = 1$ and $\overline{\spt (f_{V})} \subseteq V$ for each $V \in \mathcal{V}$. Consider $\varphi \defeq \sum\nolimits_{V \in \mathcal{V}} f_{V} \otimes h(V)$. Then, for each $x \in X$, \begin{align*}
	\psi (x) - \varphi (x) \, &= \, \sum\nolimits_{V \in \mathcal{V}} f_{V}(x)\psi (x) - \sum\nolimits_{V \in \mathcal{V}} f_{V}(x)h (V) \\
    &= \, \sum\nolimits_{V \in \mathcal{V}} f_{V}(x) (\psi (x) - h(V)) \, \in \, U 
\end{align*} due to $U$ being convex. This proves our claim. \end{proof}

\begin{lemma}\label{lemma:cut.off} A continuous action of a topological group $G$ on a compact space $X$ is amenable if and only if there exists a net of continuous maps $\mu_{\iota} \colon X \to \Delta(G)$ $(\iota \in I)$ such that \begin{itemize}
    \item[$(1)$] $\bigcup\nolimits_{x \in X} \spt (\mu_{\iota}(x))$ is finite for every $\iota \in I$, and
    \item[$(2)$] for every $g \in G$ and every $B \in \RUEB(G)$, \begin{displaymath}
    \qquad \sup\nolimits_{x \in X} \sup\nolimits_{f \in B} \vert (g\mu_{\iota}(x))(f) - \mu_{\iota}(gx)(f) \vert \, \longrightarrow \, 0 \quad (\iota \to I) .
\end{displaymath} \end{itemize} \end{lemma}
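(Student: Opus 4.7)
The reverse implication is essentially formal: any net satisfying $(1)$ and $(2)$ takes values in $\Delta(G) \subseteq \Meanu(G_{\Rsh})$, and condition $(1)$ forces $\mu_\iota$ to factor through a fixed finite set of Dirac masses, so continuity in the UEB topology reduces to continuity of the finitely many coefficient functions, using that UEB pseudo-norms are dominated by $\Vert \cdot \Vert_{1}$ on finitely supported measures. All the work lies in the forward direction.

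For the forward implication, I would base the argument on Lemma \ref{lemma:abstract.approximation} applied with $D \defeq \Delta(G)$, whose UEB-closure in $E \defeq \UCB(G_{\Rsh})^{\ast}$ is precisely $K \defeq \Meanu(G_{\Rsh})$ by the very definition of the latter. Given a witness of amenability $(\nu_\iota)_{\iota \in I}$ and a basic UEB zero-neighborhood $U \subseteq E$, the lemma supplies $\varphi = \sum_{i} f_{i} \otimes z_{i}$ with $z_{i} \in \Delta(G)$ and $(f_{i})$ a continuous partition of unity on $X$ satisfying $\nu_\iota(x) - \varphi(x) \in U$ for every $x \in X$. Crucially $\varphi(x) \in \Delta(G)$ with $\bigcup_{x \in X} \spt(\varphi(x)) \subseteq \bigcup_{i} \spt(z_{i})$, a finite set, so $\varphi$ automatically satisfies $(1)$.

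A diagonal construction then produces the required net. Index $J \defeq I \times \Pfin(G) \times \Pfin(\RUEB(G)) \times \R_{>0}$ coordinatewise, and for $j = (\iota, F, \mathcal{B}, \varepsilon) \in J$ enlarge $\mathcal{B}$ to $\mathcal{B}' \defeq \mathcal{B} \cup \{ \{ f \circ \lambda_{g} \mid f \in B \} \mid B \in \mathcal{B}, \, g \in F \}$. This enlargement stays inside $\RUEB(G)$ because left translation $\lambda_{g}$ induces a uniform isomorphism of $G_{\Rsh}$ (concretely, $g^{-1}Vg \in \Neigh(G)$ for each $V \in \Neigh(G)$). Applying Lemma \ref{lemma:abstract.approximation} with $U \defeq \{ \mu \in E \mid p_{B'}(\mu) \leq \varepsilon \text{ for all } B' \in \mathcal{B}' \}$ then yields a continuous $\mu_{j} \colon X \to \Delta(G)$ satisfying $(1)$ and $\sup_{x \in X} p_{B'}(\mu_{j}(x) - \nu_\iota(x)) \leq \varepsilon$ for every $B' \in \mathcal{B}'$.

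Verifying $(2)$ for $(\mu_{j})_{j \in J}$ then reduces to the standard three-term triangle decomposition
\begin{align*}
(g\mu_{j}(x))(f) - \mu_{j}(gx)(f) \,&=\, (\mu_{j}(x) - \nu_\iota(x))(f \circ \lambda_{g}) \\
&\quad +\, (g\nu_\iota(x) - \nu_\iota(gx))(f) + (\nu_\iota(gx) - \mu_{j}(gx))(f);
\end{align*}
upon taking $\sup_{x \in X, \, f \in B}$, the outer terms become $p_{\{f \circ \lambda_{g} \mid f \in B\}}$- and $p_{B}$-evaluations of $\mu_{j} - \nu_\iota$, each bounded by $\varepsilon$ once $(g, B)$ have been absorbed into $j$, while the middle term tends to zero along $I$ by hypothesis on $(\nu_\iota)$. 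The only subtlety, which I expect to be the main (if mild) obstacle in the write-up, is precisely the stability of the UEB-error under left translation; this is why $\mathcal{B}$ must be preemptively enlarged to include the translates $\{f \circ \lambda_{g} \mid f \in B\}$.
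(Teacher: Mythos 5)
Your proposal is correct and follows essentially the same route as the paper: the paper's proof is exactly a (one-line) application of Lemma~\ref{lemma:abstract.approximation} with $D=\Delta(G)$ and $K=\Meanu(G_{\Rsh})$, and your re-indexing argument together with the observation that $\{f\circ\lambda_{g}\mid f\in B\}$ remains in $\RUEB(G)$ simply spells out the ``straightforward application'' the authors leave to the reader.
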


\begin{proof} Since $\overline{\Delta(G)} = \Meanu(G_{\Rsh})$ in the locally convex space $E \defeq \UCB(G_{\Rsh})^{\ast}$ endowed with the UEB topology, the claim follows by a straightforward application of Lemma~\ref{lemma:abstract.approximation}. \end{proof}

The following theorem provides the familiar characterization of amenability in the setting of amenable actions.

\begin{theorem}\label{thm:amenablegroup} Let $G$ be a topological group. The following are equivalent. \begin{itemize}
    \item[$(1)$] $G$ is amenable.
    \item[$(2)$] The action of $G$ on a singleton space is amenable.
    \item[$(3)$] Every continuous action of $G$ on a compact space is amenable.
\end{itemize} \end{theorem}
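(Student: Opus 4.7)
The plan is to prove the chain (3) $\Rightarrow$ (2) $\Rightarrow$ (1) $\Rightarrow$ (3). Implication (3) $\Rightarrow$ (2) is immediate, since a singleton carries a unique continuous $G$-action. For (2) $\Rightarrow$ (1): given a witness $(\mu_{\iota})_{\iota \in I} \subseteq \Meanu(G_{\Rsh})$ for the singleton action, weak-$\ast$ compactness of $\Mean(G_{\Rsh})$ yields a cluster point $m$; for each $f \in \RUCB(G)$ the singleton $\{f\}$ is in $\RUEB(G)$, so the UEB-approximate invariance gives $(g\mu_{\iota})(f) - \mu_{\iota}(f) \to 0$ for every $g \in G$, whence $(gm)(f) = m(f)$. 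Hence $m$ is left-invariant and $G$ is amenable.

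For the substantive implication (1) $\Rightarrow$ (3), fix an invariant mean $m$ on $\RUCB(G)$. The strategy is to first establish the \emph{UEB-Reiter condition}: for every $F \in \Pfin(G)$, every $B \in \RUEB(G)$, and every $\varepsilon > 0$, there is $\nu \in \Delta(G)$ with
\[
\sup\nolimits_{g \in F, \, f \in B} |\nu(f \circ \lambda_{g} - f)| < \varepsilon.
\]
Once this is in hand, indexing tuples $\iota = (F,B,\varepsilon)$ by the obvious directed set and choosing corresponding $\nu_{\iota} \in \Delta(G)$, the constant maps $X \ni x \mapsto \nu_{\iota}$, viewed as maps into $\Delta(G) \subseteq \Meanu(G_{\Rsh})$, are trivially continuous, and the defining amenability condition for any continuous action $G \curvearrowright X$ reduces precisely to the UEB-Reiter bound for fixed $g, B$.

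The UEB-Reiter condition itself I would establish by a Day-style Hahn--Banach argument. Suppose it fails for some $(F, B, \varepsilon)$. Then the convex image $K \defeq \Phi(\Delta(G)) \subseteq \ell^{\infty}(F \times B)$ of the affine map $\Phi(\nu)(g, f) \defeq \nu(f \circ \lambda_{g} - f)$ misses the open $\varepsilon$-ball around $0$, so Hahn--Banach yields a bounded linear functional $L$ on $\ell^{\infty}(F \times B)$ with $\|L\| = 1$ and $L(\phi) \geq \varepsilon$ for every $\phi \in K$. Decomposing $L = \sum_{g \in F} L_{g}$ with $L_{g}$ supported on the $g$-slice, I would define $h_{g}(x) \defeq L_{g}(f \mapsto f(x))$; using the $\RUEB$ property of $B$, the function $h_g$ is readily checked to be bounded and right-uniformly continuous. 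A direct manipulation yields $\nu(h_{g} \circ \lambda_{g} - h_{g}) = L_{g}(f \mapsto (g\nu - \nu)(f))$, and summation gives $\nu\bigl(\sum_{g \in F}(h_{g} \circ \lambda_{g} - h_{g})\bigr) = L(\Phi(\nu)) \geq \varepsilon$ for every $\nu \in \Delta(G)$. By weak-$\ast$ density of $\Delta(G)$ in $\Mean(G_{\Rsh})$, this bound persists at $m$; but left-invariance of $m$ forces the left-hand side to vanish, a contradiction.

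The main obstacle in the plan is verifying that the functions $h_{g}$ produced from the Hahn--Banach functional really lie in $\RUCB(G)$. This step relies crucially on $B$ being uniformly equicontinuous in the right uniformity (not merely norm-bounded), via the estimate $|h_{g}(x) - h_{g}(y)| \leq \|L_{g}\| \sup_{f \in B} |f(x) - f(y)|$; together with the accompanying identity tying $h_g \circ \lambda_g - h_g$ back to $L_g$, this is routine but notationally delicate, and is the only place where the $\RUEB$-structure (rather than mere boundedness) of $B$ is used.
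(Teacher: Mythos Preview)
Your argument is correct. The paper's own proof is a one-line citation to \cite[Theorem~3.2]{MR3809992}, which is precisely the UEB-Reiter (F{\o}lner-type) characterization of amenability you establish and then apply via constant witnesses; so the underlying route is the same, only you have unpacked the cited result with a self-contained Day-style Hahn--Banach separation. The key technical point you single out---that the functions $h_{g}(x) = L_{g}((f(x))_{f\in B})$ land in $\RUCB(G)$ because $B$ is right-uniformly equicontinuous, and that linearity of $L_{g}$ yields the identity $\nu(h_{g}\circ\lambda_{g}-h_{g}) = L_{g}((\Phi(\nu)(g,f))_{f\in B})$---is exactly right and is indeed the only place the UEB hypothesis (rather than mere norm-boundedness) enters. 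Two minor remarks: in forming the directed index set you are implicitly using that $\RUEB(G)$ is directed under inclusion, which holds since finite unions of UEB sets are UEB; and in the separation step you should perhaps make explicit that $\Phi$ is affine (so $K$ is convex) before invoking Hahn--Banach.
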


\begin{proof} This is a direct consequence of~\cite[Theorem~3.2]{MR3809992}. \end{proof}

We now turn to a skew version of the concepts introduced above.

\begin{definition}\label{def:skewamenableaction} A continuous action of a topological group $G$ on a compact space space $X$ is said to be \emph{skew-amenable} if there exists a net of continuous maps $\mu_{\iota} \colon X \to \Meanu(G_{\Lsh})$ $(\iota \in I)$ such that, for every $g \in G$ and for every $B \in \LUEB(G)$, \begin{displaymath}
    \sup\nolimits_{x \in X} \sup\nolimits_{f \in B} \vert (g\mu_{\iota}(x))(f) - \mu_{\iota}(gx)(f) \vert \, \longrightarrow \, 0 \quad (\iota \to I) .
\end{displaymath} Such a net will be called a \emph{witness of skew-amenability} for the action. A topological group is called \emph{skew-exact} (or \emph{skew-amenable at infinity}) if it admits a skew-amenable continuous action on some non-empty compact space. \end{definition}

\begin{remark} Equivalently, a topological group $G$ is skew-exact if and only if it acts skew-amenably on its own Samuel compactification resp.\ its universal minimal flow $\UMF(G)$.
\end{remark}

\begin{lemma}\label{lemma:skew.cut.off} A continuous action of a topological group $G$ on a compact space space $X$ is skew-amenable if and only if there exists a net of continuous maps $\mu_{\iota} \colon X \to \Delta(G)$ $(\iota \in I)$ such that \begin{itemize}
    \item[$(1)$] $\bigcup\nolimits_{x \in X} \spt (\mu_{\iota}(x))$ is finite for every $\iota \in I$, and
    \item[$(2)$] for every $g \in G$ and every $B \in \LUEB(G)$, \begin{displaymath}
    \qquad \sup\nolimits_{x \in X} \sup\nolimits_{f \in B} \vert (g\mu_{\iota}(x))(f) - \mu_{\iota}(gx)(f) \vert \, \longrightarrow \, 0 \quad (\iota \to I) .
\end{displaymath} \end{itemize} \end{lemma}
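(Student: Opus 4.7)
The plan is to mirror the proof of Lemma~\ref{lemma:cut.off}, replacing the right uniformity by the left uniformity throughout. The backward direction is immediate because $\Delta(G) \subseteq \Meanu(G_\Lsh)$ and the approximation condition in (2) is exactly the one required by Definition~\ref{def:skewamenableaction}.

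For the forward direction, I would start from a witness $(\mu_\iota)_{\iota \in I}$ of skew-amenability and apply Lemma~\ref{lemma:abstract.approximation} in the locally convex space $E \defeq \UCB(G_\Lsh)^\ast$ equipped with the UEB topology, taking $D = \Delta(G)$ and $K = \overline{D} = \Meanu(G_\Lsh)$ (the latter holding by definition of $\Meanu$). This provides, for each $\iota$ and each UEB zero-neighborhood $U$, a continuous map $\varphi \colon X \to \Delta(G)$ of the form $\sum_V f_V \otimes h(V)$ with $h(V) \in \Delta(G)$ and $\{f_V\}$ a continuous partition of unity on $X$, with $\mu_\iota(x) - \varphi(x) \in U$ for all $x \in X$. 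Such $\varphi$ automatically satisfies (1) since $\bigcup_{x \in X} \spt(\varphi(x)) \subseteq \bigcup_V \spt(h(V))$ is a finite union of finite sets.

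To enforce (2), I would refine the indexing to $I \times \Pfin(G) \times \LUEB(G) \times \R_{>0}$, directed in the obvious way. The only non-formal ingredient is the left-translation invariance of $\LUEB(G)$: for every $g \in G$ and $B \in \LUEB(G)$, the set $B \circ \lambda_g \defeq \{f \circ \lambda_g \mid f \in B\}$ again lies in $\LUEB(G)$, since $(f \circ \lambda_g) - (f \circ \lambda_g) \circ \rho_u = (f - f \circ \rho_u) \circ \lambda_g$ has the same sup-norm as $f - f \circ \rho_u$. Given $(\iota, F, B, \varepsilon)$, setting $\tilde B \defeq B \cup \bigcup_{g \in F}(B \circ \lambda_g) \in \LUEB(G)$ and choosing $U = \{z \in E \mid p_{\tilde B}(z) < \varepsilon/2\}$ in the construction above, the identity $p_B(g\mu - g\nu) = p_{B \circ \lambda_g}(\mu - \nu)$ (which follows from $(g\mu)(f) = \mu(f \circ \lambda_g)$), together with the triangle inequality and the witness property of $(\mu_\iota)$, yields (2) along the refined net. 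The translation-invariance check is the only substantive step; everything else is formal bookkeeping.
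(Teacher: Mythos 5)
Your proposal is correct and matches the paper's argument: the paper's proof is exactly the observation that $\overline{\Delta(G)} = \Meanu(G_{\Lsh})$ in $\UCB(G_{\Lsh})^{\ast}$ with the UEB topology, followed by an application of Lemma~\ref{lemma:abstract.approximation}. The additional bookkeeping you supply (left-translation invariance of $\LUEB(G)$ and the refined net) is a correct filling-in of the details the paper leaves as ``straightforward.''
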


\begin{proof} Since $\overline{\Delta(G)} = \Meanu(G_{\Lsh})$ in the locally convex space $E \defeq \UCB(G_{\Lsh})^{\ast}$ endowed with the UEB topology, the desired conclusion follows by an application of Lemma~\ref{lemma:abstract.approximation}. \end{proof}

We now record a useful description of (skew-)amenability of actions which follows from the above discussion.

\begin{lemma}\label{lemma:witnesses.in.function.spaces}
A continuous action of a topological group $G$ on a compact space $X$ is amenable resp. skew-amenable if and only if there exists a net $\mu_\iota\in C(X,\Delta(G))$ such that $\bigcup\nolimits_{x \in X} \spt (\mu_{\iota}(x))$ is finite for every $\iota \in I$ and for every $g\in G$ we have
\[
(g\otimes g^{-1})(\mu_\iota)-\mu_\iota \to 0,\quad \iota\to I
\]
in $C(X,\UCB(G_{\Rsh})^{\ast})$ resp. $C(X,\UCB(G_{\Lsh})^{\ast})$ with the topology of uniform convergence, the spaces $\UCB(G_{\Rsh})^{\ast}$ resp. $\UCB(G_{\Lsh})^{\ast}$ being endowed with the UEB topologies.
\end{lemma}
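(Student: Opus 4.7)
The plan is to deduce this by combining the cut-off reductions (Lemma \ref{lemma:cut.off} and Lemma \ref{lemma:skew.cut.off}) with an unpacking of the topology of uniform convergence on $C(X, E)$, where $E$ is one of $\UCB(G_{\Rsh})^{\ast}$ or $\UCB(G_{\Lsh})^{\ast}$ endowed with its UEB topology. In particular, the lemma is essentially definitional once the operator $g \otimes g^{-1}$ is correctly interpreted.

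First, I would invoke Lemma \ref{lemma:cut.off} (for the amenable version) or Lemma \ref{lemma:skew.cut.off} (for the skew version) to reduce the problem to finding nets of continuous maps $\mu_{\iota} \colon X \to \Delta(G)$ satisfying the finite-total-support condition~(1) and the convergence condition~(2) of those lemmas. This gets rid of the passage to $\Meanu$-valued maps and lets me work directly with nets in $C(X, \Delta(G))$.

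Next, I would interpret the operator $g \otimes g^{-1}$ on $C(X, E)$ as
\[
    ((g \otimes g^{-1})\psi)(x) \, \defeq \, g \cdot \psi(g^{-1} x) \qquad (x \in X),
\]
where the action of $g$ on $E$ is the dual of right-translation (resp.\ left-translation) of test functions, which restricts to the action $\mu \mapsto \mu \circ \lambda_{g^{-1}}$ on $\Delta(G)$ recalled in Section~\ref{section:preliminaries}. The key computation is then, for any UEB set $B$,
\[
    \sup_{x \in X} p_{B}\bigl(((g\otimes g^{-1})\mu_{\iota})(x) - \mu_{\iota}(x)\bigr) \, = \, \sup_{x \in X} \sup_{f \in B} \bigl\vert (g\mu_{\iota}(g^{-1}x))(f) - \mu_{\iota}(x)(f) \bigr\vert ,
\]
and since $x \mapsto g^{-1} x$ is a bijection of $X$, the substitution $y = g^{-1}x$ transforms this into the expression $\sup_{y \in X} \sup_{f \in B} \vert (g\mu_{\iota}(y))(f) - \mu_{\iota}(gy)(f) \vert$ appearing in condition~(2) of the cut-off lemmas.

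Finally, I would observe that the topology of uniform convergence on $C(X, E)$ is, by definition, generated by the seminorms $\psi \mapsto \sup_{x \in X} p_{B}(\psi(x))$ as $B$ ranges over $\RUEB(G)$ (resp.\ $\LUEB(G)$). Therefore $(g \otimes g^{-1})\mu_{\iota} - \mu_{\iota} \to 0$ in $C(X, E)$ with respect to uniform convergence amounts precisely to condition~(2) of the relevant cut-off lemma holding for every $g \in G$ and every such~$B$, which closes the equivalence. The only real care needed is to fix the action conventions so that the identification $(g\otimes g^{-1})\psi \colon x \mapsto g\cdot \psi(g^{-1}x)$ matches the conventions of Definition~\ref{def:amenableaction} and Definition~\ref{def:skewamenableaction}; once this is set up, no further technical obstacle arises.
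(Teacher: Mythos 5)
Your proposal is correct and coincides with the paper's (implicit) argument: the paper records this lemma as an immediate consequence of Lemma~\ref{lemma:cut.off} and Lemma~\ref{lemma:skew.cut.off}, and your unpacking of the uniform-convergence topology via the seminorms $\psi \mapsto \sup_{x \in X} p_{B}(\psi(x))$, together with the substitution $y = g^{-1}x$ identifying $(g\otimes g^{-1})\mu_\iota - \mu_\iota$ with condition~(2) of those lemmas, is exactly the intended reasoning.
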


In analogy to Theorem \ref{thm:amenablegroup}, we obtain a characterization of skew-amenability of a topological group.

\begin{theorem} Let $G$ be a topological group. The following are equivalent. \begin{itemize}
    \item[$(1)$] $G$ is skew-amenable.
    \item[$(2)$] The action of $G$ on a singleton space is skew-amenable.
    \item[$(3)$] Every continuous action of $G$ on a compact space is skew-amenable.
\end{itemize} \end{theorem}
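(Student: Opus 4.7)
My plan is to follow the structure of the proof of Theorem~\ref{thm:amenablegroup}, exploiting the fact that the framework developed so far (UEB topologies, uniform measures, the cut-off lemma) is symmetric under swapping the right and left uniformities of $G$. The implication $(3)\Rightarrow(2)$ I would get for free by specializing to a one-point space. For $(2)\Rightarrow(1)$, a witness of skew-amenability of the action $G\curvearrowright\{\ast\}$ is simply a net $(\mu_\iota)_{\iota\in I}$ in $\Meanu(G_\Lsh)$ such that $(g\mu_\iota - \mu_\iota)(f) \to 0$ for every $g\in G$ and every $f\in \LUCB(G)$ — the latter because the singleton $\{f\}$ itself belongs to $\LUEB(G)$. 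Passing to a weak-$\ast$ cluster point $m\in \Mean(G_\Lsh)\supseteq \Meanu(G_\Lsh)$, one obtains a left-invariant mean on $\LUCB(G)$, which is precisely skew-amenability of $G$.

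For the nontrivial direction $(1)\Rightarrow(3)$, I would invoke the skew analog of~\cite[Theorem~3.2]{MR3809992}: $G$ is skew-amenable if and only if there exists a net $(\nu_\iota)_{\iota\in I}$ in $\Delta(G)$ such that for every $g\in G$ and every $B\in\LUEB(G)$,
\[
    \sup\nolimits_{f\in B} \lvert (g\nu_\iota)(f) - \nu_\iota(f) \rvert \, \longrightarrow \, 0.
\]
Given such a net, for any continuous action $G\curvearrowright X$ on a compact space $X$ the constant maps $\mu_\iota \colon X \to \Delta(G)$, $\mu_\iota(x)\equiv \nu_\iota$, are continuous with finite total support, and for every $g\in G$ and $B\in \LUEB(G)$,
\[
    \sup\nolimits_{x\in X} \sup\nolimits_{f\in B} \lvert (g\mu_\iota(x))(f) - \mu_\iota(gx)(f) \rvert \, = \, \sup\nolimits_{f\in B} \lvert (g\nu_\iota - \nu_\iota)(f) \rvert \, \longrightarrow \, 0,
\]
so Lemma~\ref{lemma:skew.cut.off} delivers skew-amenability of the action.

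The only real point of attention is that \cite[Theorem~3.2]{MR3809992} is formulated on the amenability (right-uniformity) side. The hard part, if any, is to confirm that its proof transfers verbatim to the left uniformity. In fact, the argument there relies exclusively on general features of $\UEB$-topologies and on the UEB-density of $\Delta(G)$ in $\Meanu(G_\Rsh)$, both of which are developed in Pachl's framework for arbitrary uniform spaces; replacing $G_\Rsh$ by $G_\Lsh$ throughout therefore yields the stated skew Reiter characterization without modification. Once that observation is in hand, the remainder of the proof is essentially bookkeeping.
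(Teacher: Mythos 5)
Your overall route coincides with the paper's: the paper proves this theorem by quoting the F\o lner/Reiter-type characterization of skew-amenability, namely \cite[Theorem~5.1]{JuschenkoSchneider} (itself a special case of \cite[Theorem~3.2]{pachl}), and your steps --- $(3)\Rightarrow(2)$ by specialization, $(2)\Rightarrow(1)$ by passing to a weak-$\ast$ cluster point, $(1)\Rightarrow(3)$ by feeding an almost-invariant net in $\Delta(G)$ into constant maps and Lemma~\ref{lemma:skew.cut.off} --- are exactly the intended unpacking of that citation. These steps are correct; in particular you rightly recognize that one cannot simply use an invariant mean on $\LUCB(G)$ as a constant witness, since such a mean need not be a uniform measure, so the approximation by elements of $\Delta(G)$ uniformly over $\LUEB$ sets is genuinely needed.

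The weak point is how you justify that key input. Claiming that the proof of \cite[Theorem~3.2]{MR3809992} transfers ``verbatim'' upon replacing $G_{\Rsh}$ by $G_{\Lsh}$ is stronger than what you have verified: the interaction of left translations with the two uniformities is not symmetric (orbit maps $g \mapsto f \circ \lambda_{g}$ are norm-continuous for $f \in \RUCB(G)$ but in general not for $f \in \LUCB(G)$; the left-translation action on $\Samuel(G_{\Lsh})$ is not even continuous, as noted in Section~\ref{sec:emb}; and amenability and skew-amenability are inequivalent notions), and parts of \cite[Theorem~3.2]{MR3809992} do exploit the right-uniformity setting. What saves your argument is that the statement you need is precisely a theorem in the literature, \cite[Theorem~5.1]{JuschenkoSchneider}, resp.\ Pachl's general approximate-fixed-point theorem \cite[Theorem~3.2]{pachl}, whose functional-analytic core (upgrading weak-$\ast$ almost invariance of a net in $\Delta(G)$ to almost invariance uniform on UEB sets, via the duality theory of uniform measures) is indeed insensitive to the choice of uniformity. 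So either cite that result directly --- which is what the paper does --- or carry out the verification of the one implication you use instead of asserting a verbatim transfer.
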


\begin{proof} This is a direct consequence of~\cite[Theorem~5.1]{JuschenkoSchneider}, which is a special case of~\cite[Theorem~3.2]{pachl}. \end{proof}

In order to compare the notion of AD-amenability to (skew-)amenability in the sense of Definition~\ref{def:amenableaction} (resp., \ref{def:skewamenableaction}), we turn to the natural maps constructed in the following remark.

\begin{remark}\label{remark:haar.measure} Let $G$ be a locally compact group and let $\nu$ be a left Haar measure on $G$. Then the map $\Xi_{\nu}^{\Rsh} \colon \Prob(G,\nu) \to \Meanu(G_{\Rsh})$ given by \begin{displaymath}
    \Xi_{\nu}^{\Rsh}(\varphi)(f) \, \defeq \, \int f(x)\varphi(x) \, \mathrm{d}\nu(x) \qquad (\varphi \in \Prob(G,\nu), \, f \in \RUCB(G))
\end{displaymath} is well defined, $G$-left-equivariant, and uniformly continuous with respect to the UEB uniformity on $\Meanu(G_{\Rsh})$ and the uniformity induced by $\Vert \cdot \Vert_{\nu,1}$ on $\Prob(G,\nu)$. Likewise, the map $\Xi_{\nu}^{\Lsh} \colon \Prob(G,\nu) \to \Meanu(G_{\Lsh})$ given by \begin{displaymath}
    \Xi_{\nu}^{\Lsh}(\varphi)(f) \, \defeq \, \int f(x)\varphi(x) \, \mathrm{d}\nu(x) \qquad (\varphi \in \Prob(G,\nu), \, f \in \LUCB(G))
\end{displaymath} is well defined, $G$-left-equivariant, and uniformly continuous with respect to the UEB uniformity on $\Meanu(G_{\Lsh})$ and the uniformity induced by $\Vert \cdot \Vert_{\nu,1}$ on $\Prob(G,\nu)$. \end{remark}

We do not know if, for a locally compact group $G$ equipped with a left Haar measure $\nu$, the $\sigma(C_0(G)^*,C_0(G))$-topology on $\Prob(G,\nu)$ agrees with the $\UEB$ topology inherited from $\Mean_u(G_{\Rsh})$ (or $\Mean_u(G_{\Lsh})$, resp.). In turn, it is unclear whether $\Xi_{\nu}^{\Rsh}$ (resp., $\Xi_{\nu}^{\Lsh}$) is $\sigma(C_0(G)^*,C_0(G))$-to-$\UEB$ continuous. However, we will see that for the definition of the AD-amenability of an action, the difference of these topologies is inessential. The following proposition is a corollary to \cite[Proposition 2.2]{AD23amenabilityexactness} and will be used later in both Theorem~\ref{theorem:ad.vs.skew} and Corollary~\ref{corollary:skew.implies.straight}. 

\begin{proposition}\label{proposition:ad.definition} Let $G$ be a locally compact group acting continuously on a compact space $X$. If $G \curvearrowright X$ is AD-amenable, then there exists a witness $(\mu_{\iota})_{\iota \in I}$ of AD-amenability for $G \curvearrowright X$ such that, for each $\iota \in I$, the map ${\Xi_{\nu}^{\Lsh}} \circ {\mu_{\iota}} \colon X \to \Mean_u(G_{\Rsh})$ (resp., ${\Xi_{\nu}^{\Lsh}} \circ {\mu_{\iota}} \colon X \to \Mean_u(G_{\Lsh})$) is $\UEB$-continuous. \end{proposition}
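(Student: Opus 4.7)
The plan is to derive this as a corollary of \cite[Proposition 2.2]{AD23amenabilityexactness}. That result upgrades Definition~\ref{definition:ad.amenability} by producing, for every AD-amenable action $G \curvearrowright X$, a witness $(\mu_\iota)_{\iota \in I}$ whose components $\mu_\iota \colon X \to \Prob(G,\nu)$ are continuous with respect to the $L^{1}$-norm $\Vert\cdot\Vert_{\nu,1}$ on $\Prob(G,\nu)$, rather than merely the weak-$\ast$ topology $\sigma(C_0(G)^{\ast}, C_0(G))$ that appears in the original definition. My first step is simply to invoke this proposition in order to fix such a norm-continuous witness of AD-amenability.

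Given that, UEB-continuity of the compositions $\Xi_{\nu}^{\Rsh} \circ \mu_\iota$ and $\Xi_{\nu}^{\Lsh} \circ \mu_\iota$ is a purely formal matter. Remark~\ref{remark:haar.measure} asserts that both maps $\Xi_{\nu}^{\Rsh} \colon \Prob(G,\nu) \to \Meanu(G_{\Rsh})$ and $\Xi_{\nu}^{\Lsh} \colon \Prob(G,\nu) \to \Meanu(G_{\Lsh})$ are uniformly continuous with respect to the uniformity induced by $\Vert\cdot\Vert_{\nu,1}$ on their common domain and the UEB uniformity on their respective codomains. Composing a pointwise continuous map with a uniformly continuous map yields a continuous map, so composing the norm-continuous $\mu_\iota$ with either $\Xi_{\nu}^{\Rsh}$ or $\Xi_{\nu}^{\Lsh}$ will give the desired UEB-continuous map. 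Note also that the AD-amenability approximation condition (a supremum over $X$) is automatically preserved by this composition, so $(\mu_\iota)_{\iota \in I}$ remains a witness of AD-amenability in the sense of Definition~\ref{definition:ad.amenability}.

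The entire substantive content sits in the appeal to the AD proposition; everything that follows is a one-line composition argument. The main obstacle, such as it is, is therefore to verify that the cited result indeed delivers witnesses in the strong norm-continuous form needed here — for instance, witnesses arising from a net of nonnegative compactly supported continuous functions $\xi_\iota \in C_{c}(G \times X)$ satisfying $\int_{G} \xi_\iota(g,x)\,\mathrm{d}\nu(g) = 1$ for every $x \in X$, from which $\Vert\cdot\Vert_{\nu,1}$-continuity in $x$ is automatic by uniform continuity of $\xi_\iota$ on compact sets. No further ingredients beyond this upgrade and Remark~\ref{remark:haar.measure} are required.
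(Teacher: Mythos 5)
Your argument is correct in substance and starts from the same place as the paper's appendix -- the appeal to \cite[Proposition~2.2]{AD23amenabilityexactness} -- but it handles the continuity step by a genuinely different (and more direct) mechanism. Be aware that what the cited proposition actually delivers, as used in the paper, is a net $h_{\iota}\in C_c(X\times G)_{\geq 0}$ whose fibrewise integrals tend to $1$ only \emph{approximately} (uniformly in $x$) together with asymptotic equivariance in $\Vert\cdot\Vert_{\nu,1}$; it does not literally hand you exactly normalized, $L^{1}$-norm-continuous witnesses. So the obstacle you flag is real but mild: you must normalize $\mu_{\iota}(x)=h_{\iota}^{x}/\int_G h_{\iota}^{x}\,\mathrm{d}\nu$, observe that continuity and positivity of the denominator (this is exactly the paper's Lemma~\ref{lem Ch is conti}) together with the compact-support/uniform-continuity argument you sketch give $\Vert\cdot\Vert_{\nu,1}$-continuity of $x\mapsto\mu_{\iota}(x)$, and check (a routine estimate from the two asymptotic conditions, which the paper also leaves implicit) that the normalized net is still a witness in the sense of Definition~\ref{definition:ad.amenability}. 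Granting this, your composition with $\Xi_{\nu}^{\Rsh}$ and $\Xi_{\nu}^{\Lsh}$ via Remark~\ref{remark:haar.measure} immediately yields UEB-continuity, since norm continuity also implies the $\sigma(C_0(G)^{\ast},C_0(G))$-continuity required of a witness. The paper argues differently at this point: it uses only weak-$\ast$ continuity of the normalized densities and proves a separate equivalence (Lemma~\ref{lem weakstarandUEB}) saying that, when $\bigcup_{x\in X}\spt(\mu(x))$ is relatively compact, $\sigma(C_0(G)^{\ast},C_0(G))$-continuity of $\mu$ is equivalent to UEB-continuity of $\Xi_{\nu}^{\Lsh}\circ\mu$, the nontrivial direction resting on a cutoff function $\chi\in C_c(G)$ equal to $1$ on the supports so that $\chi f\in C_0(G)$. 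Your route buys a shorter proof by strengthening the continuity of the densities and reusing Remark~\ref{remark:haar.measure}; the paper's route buys a reusable two-sided comparison of the weak-$\ast$ and UEB formulations under the compact-support hypothesis. Both are valid.
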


The proof of Proposition~\ref{proposition:ad.definition} will be given in the appendix.

If the group $G$ is second-countable, we can produce an amenable action on a second-countable compact space out of an arbitrary amenable action.

\begin{proposition}\label{prop:separability-reduction}
Let $G\curvearrowright X$ be an amenable action of a second-countable topological group $G$ on a compact space $X$. Then there exists a second-countable compact space $Y$ with an amenable action $G\curvearrowright Y$ together with a $G$-equivariant continuous surjection $q\colon X\to Y$.
\end{proposition}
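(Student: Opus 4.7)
The plan is to realize $Y$ as the Gelfand spectrum of a $G$-invariant separable unital $C^{\ast}$-subalgebra $\mathcal{A} \subseteq C(X)$: then $Y := \mathrm{Spec}(\mathcal{A})$ is compact metrizable (hence second-countable) and the inclusion $\mathcal{A} \hookrightarrow C(X)$ dualizes to a continuous $G$-equivariant surjection $q : X \to Y$. The delicate point is to choose $\mathcal{A}$ so that an amenability witness on $X$ descends to $Y$. Since $G$ is second-countable, Birkhoff--Kakutani supplies a bounded right-invariant compatible metric $d$ on $G$; fix also a countable dense subgroup $G_{0} \leq G$. By Remark~\ref{remark:mass.transportation}, the UEB uniformity on $\Meanu(G_{\Rsh})$ agrees with the one induced by the mass-transportation metric $d_{\mathrm{MT}}$, so amenability reduces to $\sup_{x \in X} d_{\mathrm{MT}}(g\mu_{\iota}(x), \mu_{\iota}(gx)) \to 0$ for every $g \in G$.

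The first main step is to extract, from the given witness $(\mu_{\iota})_{\iota \in I}$ which by Lemma~\ref{lemma:cut.off} we take in $C(X, \Delta(G))$ with finite total support, a sequential subwitness $(\mu_{n})_{n \in \N}$. Enumerate $G_{0} = \{g_{k} : k \in \N\}$ and diagonalize the original net so that
\[
\sup_{x \in X} d_{\mathrm{MT}}(g_{k} \mu_{n}(x), \mu_{n}(g_{k} x)) \,<\, 1/n \qquad (k \leq n) .
\]
Convergence for every $g \in G$ then needs to be established. The useful uniform estimate here is that right-invariance of $d$ yields $d_{\mathrm{MT}}(g\nu, g'\nu) \leq d(g, g')$ for \emph{every} $\nu \in \Meanu(G_{\Rsh})$, immediate from $|f(gy) - f(g'y)| \leq d(g, g')$ for $f \in \Lip_{1}(G, d)$; this controls one of the two perturbation terms uniformly in $\nu$. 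The remaining term $\sup_{x} d_{\mathrm{MT}}(\mu_{n}(g_{k} x), \mu_{n}(gx))$ measures equicontinuity of $\mu_{n}$ along the $G$-direction on $X$ and, using compactness of $X$ together with the tube lemma applied to the continuous action, must be shown to vanish as $g_{k} \to g$ uniformly in $n$.

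With $(\mu_{n})_{n \in \N}$ in hand, put $F := \bigcup_{n, x} \spt(\mu_{n}(x)) \subseteq G$, a countable set, and form $\mathcal{F}_{0} := \{e_{g,n} : g \in F,\ n \in \N\} \cup \{\mathbf{1}\}$, where $e_{g,n}(x) := \mu_{n}(x)(g) \in C(X)$. Build $\mathcal{A}$ by iteratively closing $\mathcal{F}_{0}$ under (a) the action of the countable dense $G_{0}$ and (b) unital $C^{\ast}$-algebra operations; separability is preserved at every step, and $G$-invariance of the sup-norm closure follows from the sup-norm continuity of each orbit map $G \to C(X)$, $g \mapsto g \cdot f$, which holds because the action on compact $X$ is continuous. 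Then $q : X \to Y := \mathrm{Spec}(\mathcal{A})$ is a continuous $G$-equivariant surjection and $Y$ is second-countable. Since $e_{g,n} \in \mathcal{A}$ for every $g \in F$ and $\spt(\mu_{n}(x)) \subseteq F$, each $\mu_{n}$ is constant on the fibers of $q$ and descends to a continuous $\tilde\mu_{n} : Y \to \Delta(G)$ with $\mu_{n} = \tilde\mu_{n} \circ q$. Surjectivity and equivariance of $q$ then imply
\[
\sup_{y \in Y} \sup_{f \in B} \vert (g\tilde\mu_{n}(y))(f) - \tilde\mu_{n}(gy)(f) \vert \,=\, \sup_{x \in X} \sup_{f \in B} \vert (g\mu_{n}(x))(f) - \mu_{n}(gx)(f) \vert
\]
for every $g \in G$ and $B \in \RUEB(G)$, so $(\tilde\mu_{n})$ witnesses amenability of $G \curvearrowright Y$.

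The hard part will be the sequential-subwitness extraction above: while the first perturbation term is controlled uniformly by right-invariance of $d$, the equicontinuity-type term $\sup_{x} d_{\mathrm{MT}}(\mu_{n}(g_{k} x), \mu_{n}(gx))$ has moduli of continuity that a priori depend on $n$, so upgrading convergence from $g \in G_{0}$ to all $g \in G$ requires a careful choice of witness -- possibly arranging additional equicontinuity by averaging, by convolving with small ``bump'' measures supported near $e$, or by imposing further constraints on the $\mu_{n}$ at the diagonalization stage.
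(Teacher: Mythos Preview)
Your strategy coincides with the paper's: pass to a \emph{sequence} $(\mu_n)$ of finite-sum witnesses (using metrizability of the UEB topology on $\Meanu(G_{\Rsh})$) that works for $g$ in a countable dense set, generate a separable $G$-invariant unital $C^\ast$-subalgebra $\mathcal{A}\subseteq C(X)$ from the coefficient functions, and take $Y=\mathrm{Spec}(\mathcal{A})$; the descent of $\mu_n$ to $\tilde\mu_n$ and the identification of the discrepancies on $X$ and on $Y$ via surjectivity and equivariance of $q$ are exactly as in the paper.

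The step you flag as hard---upgrading the convergence from the dense $G_0$ to all of $G$---is precisely where the paper's argument departs from your outline. You correctly split the perturbation into a term bounded by $d(g,g_k)$ (uniform in everything, by right-invariance of $d$) and the modulus-of-continuity term $\sup_x d_{\mathrm{MT}}(\mu_n(g_kx),\mu_n(gx))$, which is $n$-dependent. The paper does \emph{not} arrange any equicontinuity of the particular sequence $(\mu_n)$---so your suggestions about averaging or convolution are not its route. Instead it asserts, as a general fact, that the homomorphism from $G$ to the automorphism group of the uniform space $C(X,\Meanu(G_{\Rsh}))$ is continuous for the topology of uniform convergence on that automorphism group. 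Granting this, one chooses $s\in G_0$ so close to $g$ that $\pi(s)\mu$ is close to $\pi(g)\mu$ \emph{uniformly over all} $\mu\in C(X,\Meanu(G_{\Rsh}))$, hence in particular uniformly in $n$, and the extension from $G_0$ to $G$ is then a one-line triangle inequality. That continuity claim is exactly the uniform-in-$\mu$ control of your second perturbation term; your instinct that this is the crux is correct, and if you want to close the argument along the paper's lines, this is the single statement you must prove. It deserves care: for maps of the form $\mu(x)=\delta_{\psi(x)}$ with $\psi$ highly oscillatory, the naive estimate on $\sup_x d_{\mathrm{MT}}(\mu(g^{-1}x),\mu(x))$ is not small, so the claim is not as innocent as the paper's phrasing suggests.
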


\begin{proof} Since $G$ is second-countable, $G$ is separable and metrizable. While the former asserts the existence of a countable dense subset $S \subseteq G$, the latter implies metrizability of the UEB topology on $\Meanu(G_{\Rsh})$ by Remark~\ref{remark:mass.transportation}, which entails metrizability of $C(X,\Meanu(G_{\Rsh}))$ with respect to the topology of uniform convergence. Since $G \curvearrowright X$ is amenable, using Lemma~\ref{lemma:cut.off} and Lemma~\ref{lemma:witnesses.in.function.spaces} we thus find a sequence $(\mu_{n})_{n \in \N}$ in $C(X,\Delta(G))$ such that each $\mu_{n}$ is of the form
\begin{equation}\label{eq:finite_sums_mu_i}
    \mu_{n} \, = \, \sum\nolimits_{i=1}^{m_{n}} f_{n,i}\otimes z_{n,i}, \quad f_{n,i} \in C(X), \quad z_{n,i} \in \Delta(G) ,
\end{equation}
 and
\begin{equation}\label{eq:almost.equivariance}
    \forall g \in S \colon \qquad \left(g\otimes g^{-1}\right)\!\mu_{n} - \mu_{n} \, \longrightarrow \, 0 \quad (n \to \infty) .
\end{equation} 
It is not difficult to see that the homomorphism from $G$ to the automorphism group of the uniform space $C(X,\Meanu(G_{\Rsh}))$ induced by the action is continuous with respect to the topology of uniform convergence on the latter group. As $S$ is dense in $G$, we thus infer from~\eqref{eq:almost.equivariance} that \begin{equation}\label{eq:amenability}
    \forall g \in G \colon \qquad \left(g\otimes g^{-1}\right)\!\mu_{n} - \mu_{n} \, \longrightarrow \, 0 \quad (n \to \infty) .
\end{equation} Let us denote by $A$ the $G$-invariant unital $C^{\ast}$-subalgebra of $C(X)$ generated by the countable subset $\{ f_{n,i} \mid n \in \N, \, i \in \{ 1,\ldots,m_{n} \} \}$. As $G$ is separable, so is $A$. Applying Gel'fand duality to the inclusion $A\subseteq C(X)$, we obtain a second-countable compact space $Y$ together with a continuous action $G\lacts Y$ and a $G$-equivariant surjection $X\to Y$. From~\eqref{eq:finite_sums_mu_i} and~\eqref{eq:amenability} we deduce that $G\lacts Y$ is amenable. \end{proof}

\section{Comparison in the locally compact case}
\label{sec:comparison}

We prove that for an action $G\lacts X$ of a locally compact group $G$ on a compact space $X$, AD-amenability  is equivalent to skew-amenability (Theorem~\ref{theorem:ad.vs.skew}), and if $G$ is assumed second-countable, it is also equivalent to amenability \ref{theorem:amenable.implies.AD-amenable}. 

\subsection{AD-amenability vs.\ skew-amenability}

We start off with some relevant prerequisites. The following lemma is well known.

\begin{lemma}[cf.~\cite{HewittRoss}, (20.4) Theorem, p.~285]\label{lemma:representation} Let $G$ be a locally compact group with a left Haar measure $\nu$. Then the representation $\pi \colon G \to \Iso(L^{1}(G,\nu))$ defined by \begin{displaymath}
    \pi(g)(f) \, \defeq \, f \circ \lambda_{g^{-1}} \qquad \left(g \in G, \, f \in L^{1}(G,\nu)\right)
\end{displaymath} is strongly continuous. \end{lemma}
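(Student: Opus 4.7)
The plan is to verify strong continuity by reducing to continuity at the identity of $G$ and then handling that case by density of $C_c(G)$ in $L^1(G,\nu)$.

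First I would check the preliminary algebraic facts: that $\pi(g)$ is a well-defined linear isometry of $L^1(G,\nu)$, which is exactly the content of left-invariance of $\nu$ (for any $f\in L^1(G,\nu)$ the change of variables $x \mapsto g^{-1}x$ gives $\int |f(g^{-1}x)|\, \mathrm{d}\nu(x) = \int |f(x)|\, \mathrm{d}\nu(x)$), and that $\pi$ is a group homomorphism, which is immediate from $\lambda_{g^{-1}} \circ \lambda_{h^{-1}} = \lambda_{(hg)^{-1}}$. In particular $\pi(g)\in \Iso(L^1(G,\nu))$ is justified.

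Next I would reduce strong continuity of $\pi$ at an arbitrary point $g_0\in G$ to continuity at $e$: for any $f\in L^1(G,\nu)$,
\[
\|\pi(g)f - \pi(g_0)f\|_1 \;=\; \|\pi(g_0)(\pi(g_0^{-1}g)f - f)\|_1 \;=\; \|\pi(g_0^{-1}g)f - f\|_1,
\]
using that $\pi(g_0)$ is an isometry. So it suffices to prove that $\|\pi(g)f - f\|_1 \to 0$ as $g\to e$ for every fixed $f\in L^1(G,\nu)$.

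The heart of the argument is the case $f\in C_c(G)$. Here I would fix a compact symmetric neighborhood $V$ of $e$; then for $g\in V$ the function $\pi(g)f$ is supported in the compact set $K \defeq V\cdot \supp(f)$, which has finite $\nu$-measure. Since $f$ is continuous with compact support on a locally compact group, $f$ is left-uniformly continuous, so for any $\varepsilon>0$ there is a neighborhood $U\subseteq V$ of $e$ with $\sup_{x\in G}|f(g^{-1}x) - f(x)| \leq \varepsilon/(1+\nu(K))$ whenever $g\in U$. Integrating over $K$ yields $\|\pi(g)f - f\|_1 \leq \varepsilon$ for all $g\in U$.

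Finally, I would extend from $C_c(G)$ to all of $L^1(G,\nu)$ by a standard $\varepsilon/3$ argument: given $f\in L^1(G,\nu)$ and $\varepsilon>0$, pick $h\in C_c(G)$ with $\|f-h\|_1 \leq \varepsilon/3$, use the above to choose a neighborhood $U$ of $e$ such that $\|\pi(g)h - h\|_1 \leq \varepsilon/3$ for $g\in U$, and then estimate
\[
\|\pi(g)f - f\|_1 \;\leq\; \|\pi(g)(f-h)\|_1 + \|\pi(g)h - h\|_1 + \|h - f\|_1 \;\leq\; \varepsilon
\]
using that $\pi(g)$ is an isometry. The main (and only) technical point is ensuring that one has an appropriate uniform bound on $|f(g^{-1}x) - f(x)|$ over the shifting supports, which is handled by the compact-neighborhood trick above; everything else is bookkeeping.
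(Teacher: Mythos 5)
Your proof is correct and is exactly the standard argument behind the reference the paper cites for this lemma (the paper itself gives no proof, only pointing to Hewitt--Ross (20.4)): isometry of $\pi(g)$ via left-invariance, reduction to continuity at $e$, the uniform-continuity estimate for $f\in C_c(G)$ with supports controlled inside a compact set $V\cdot\supp(f)$, and an $\varepsilon/3$ density argument. The only cosmetic point is the labelling of the uniform continuity used (the estimate $\sup_x|f(g^{-1}x)-f(x)|\to 0$ as $g\to e$); since a continuous compactly supported function is uniformly continuous in both the left and right uniformities, nothing is affected.
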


Recall that, if $G$ is any group, then the real group algebra of $G$ is the $\R$-vector space $\R[G]$ equipped with the bilinear multiplication given by \begin{displaymath}
    \alpha \beta \, \defeq \, \sum\nolimits_{g \in \spt \alpha} \alpha(g)\! \left(\beta \circ {\lambda_{g^{-1}}}\right) \qquad (\alpha,\beta \in \R[G]) . 
\end{displaymath}

Now, consider a locally compact group $G$ and let $\nu$ be a left Haar measure on $G$. Then the representation from Lemma~\ref{lemma:representation} gives rise to an $\R[G]$-module structure on $L^{1}(G,\nu)$, namely defined via \begin{displaymath}
    \alpha \varphi \, \defeq \, \sum\nolimits_{g \in \spt \alpha} \alpha(g) \!\left(\varphi \circ {\lambda_{g^{-1}}}\right) \qquad \left(\alpha \in \R[G], \, \varphi \in L^{1}(G,\nu)\right) .
\end{displaymath} Moreover, for all $\varphi \in L^{1}(G,\nu)$ and $f \in L^{\infty}(G,\nu)$, let us define \begin{displaymath}
    \langle \varphi,f \rangle \, \defeq \, \int f(x)\varphi(x) \, \mathrm{d}\nu(x)
\end{displaymath} and \begin{displaymath}
    {}_{\varphi}f \colon \, G \, \longrightarrow \, \R , \quad g \, \longmapsto \, \langle \varphi,f \circ {\lambda_{g}} \rangle = \int f(gx)\varphi(x) \, \mathrm{d}\nu(x) .
\end{displaymath} We record two elementary, but useful properties of these objects. The first lemma uses some notation introduced in Section~\ref{section:preliminaries}.

\begin{lemma}\label{lemma:shift} Let $G$ be a locally compact group equipped with a left Haar measure $\nu$. For all $\alpha \in \R[G]$, $\varphi \in L^{1}(G,\nu)$ and $f \in L^{\infty}(G,\nu)$, \begin{displaymath}
    \langle \alpha \varphi, f \rangle \, = \, \langle \alpha ,{}_{\varphi}f \rangle .
\end{displaymath} \end{lemma}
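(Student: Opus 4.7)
The plan is to reduce to the case of a Dirac mass $\alpha = \delta_g$ at a single group element, and then to invoke left invariance of the Haar measure $\nu$. Both sides of the claimed identity are $\R$-linear in $\alpha$ by construction of the $\R[G]$-action on $L^{1}(G,\nu)$ and of the pairing $\langle \,\cdot\,,\,\cdot\,\rangle$, and $\R[G]$ is spanned by the Dirac functions $\{\delta_g \mid g \in G\}$. Therefore it suffices to establish the equality
\[
    \langle \delta_g \varphi, f\rangle \,=\, \langle \delta_g, {}_{\varphi}f\rangle
\]
for every $g \in G$, $\varphi \in L^{1}(G,\nu)$, and $f \in L^{\infty}(G,\nu)$; then summing over $g \in \spt(\alpha)$ with weights $\alpha(g)$ recovers the general case.

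Unfolding the definitions recalled immediately before the lemma, one computes $\delta_g \varphi = \varphi \circ \lambda_{g^{-1}}$, so the left-hand side is
\[
    \langle \delta_g \varphi, f\rangle \,=\, \int f(x)\,\varphi(g^{-1}x) \,\mathrm{d}\nu(x).
\]
On the other hand, since $\spt(\delta_g) = \{g\}$ with $\delta_g(g) = 1$,
\[
    \langle \delta_g, {}_{\varphi}f\rangle \,=\, {}_{\varphi}f(g) \,=\, \int f(gx)\,\varphi(x)\,\mathrm{d}\nu(x).
\]
The remaining step is to identify these two integrals by the substitution $y = gx$, which preserves the integral thanks to left invariance of $\nu$ and yields $\int f(y)\,\varphi(g^{-1}y)\,\mathrm{d}\nu(y)$.

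I do not anticipate any genuine obstacle here: the integrability of both integrands is automatic from $\varphi \in L^{1}(G,\nu)$ and $f \in L^{\infty}(G,\nu)$, and the equality is exactly what left invariance of the Haar measure provides. The only minor bookkeeping is ensuring that the reduction to Dirac masses is carried out consistently with the finitary definitions of both $\alpha\varphi$ and $\langle \alpha, \,\cdot\, \rangle$, but these are encoded in the same way on both sides of the identity, so bilinearity in $\alpha$ is immediate.
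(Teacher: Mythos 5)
Your proposal is correct and follows essentially the same route as the paper: both expand the action of $\alpha$ via linearity over $\spt(\alpha)$ (you phrase this as a reduction to Dirac masses $\delta_g$) and then identify $\int f(x)\varphi(g^{-1}x)\,\mathrm{d}\nu(x)$ with $\int f(gx)\varphi(x)\,\mathrm{d}\nu(x) = ({}_{\varphi}f)(g)$ by left invariance of the Haar measure. No gaps; the integrability remarks you make are exactly the needed bookkeeping.
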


\begin{proof} If $\alpha \in \R[G]$, $\varphi \in L^{1}(G,\nu)$ and $f \in L^{\infty}(G,\nu)$, then \begin{align*}
    &\langle \alpha \varphi, f \rangle = \int f(x)(\alpha \varphi)(x) \, \mathrm{d}\nu(x) = \sum\nolimits_{g \in \spt \alpha} \alpha(g) \int f(x)\varphi\!\left(g^{-1}x\right) \! \, \mathrm{d}\nu(x) \\
    &\ \, = \sum\nolimits_{g \in \spt \alpha} \alpha(g)\int f(gx)\varphi(x) \, \mathrm{d}\nu(x) = \sum\nolimits_{g \in \spt \alpha} \alpha(g)({}_{\varphi}f)(g) = \langle \alpha,{}_{\varphi}f \rangle . \qedhere
\end{align*} \end{proof}

\begin{lemma}\label{lemma:lueb} Let $G$ be a locally compact group equipped with a left Haar measure $\nu$. Let $\varphi \in L^{1}(G,\nu)$. Then \begin{displaymath}
    \{ {}_{\varphi}f \mid f \in L^{\infty}(G,\nu), \, \Vert f \Vert_{\nu,\infty} \leq 1 \} \, \in \, \LUEB(G) .
\end{displaymath} \end{lemma}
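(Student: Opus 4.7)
The plan is to verify directly the two defining conditions of $\LUEB(G)$: boundedness in supremum norm and uniform equicontinuity with respect to the left uniformity of $G$. The key idea is that, by the left-invariance of $\nu$, right-translation in $g$ inside ${}_{\varphi}f(g)=\int f(gx)\varphi(x)\,d\nu(x)$ can be converted into a translation of $\varphi$, and then Lemma~\ref{lemma:representation} takes over.

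For boundedness, the estimate
\[
    |{}_{\varphi}f(g)| \,\le\, \int |f(gx)||\varphi(x)|\,d\nu(x) \,\le\, \|\varphi\|_{\nu,1}
\]
holds for every $g \in G$ and every $f \in L^{\infty}(G,\nu)$ with $\|f\|_{\nu,\infty}\le 1$, so the family is uniformly bounded by $\|\varphi\|_{\nu,1}$ in $\ell^{\infty}(G)$.

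For uniform equicontinuity in the left uniformity, fix $u \in G$. Using the substitution $x \mapsto u^{-1}x$ and left-invariance of $\nu$, one obtains
\[
    {}_{\varphi}f(gu) \,=\, \int f(gux)\varphi(x)\,d\nu(x) \,=\, \int f(gx)\varphi(u^{-1}x)\,d\nu(x),
\]
hence
\[
    {}_{\varphi}f(g)-{}_{\varphi}f(gu) \,=\, \int f(gx)\bigl(\varphi(x)-\varphi(u^{-1}x)\bigr)\,d\nu(x).
\]
Estimating the integrand and taking the supremum over $g \in G$, this yields
\[
    \|{}_{\varphi}f - {}_{\varphi}f \circ \rho_{u}\|_{\infty} \,\le\, \|f\|_{\nu,\infty}\cdot\|\varphi - \pi(u)\varphi\|_{\nu,1},
\]
where $\pi$ is the representation from Lemma~\ref{lemma:representation}.

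To finish, given $\varepsilon \in \R_{>0}$, Lemma~\ref{lemma:representation} furnishes a $U \in \Neigh(G)$ such that $\|\varphi-\pi(u)\varphi\|_{\nu,1}\le\varepsilon$ for every $u \in U$. The previous estimate then gives $\|{}_{\varphi}f - {}_{\varphi}f\circ\rho_{u}\|_{\infty}\le\varepsilon$ uniformly over all $f$ with $\|f\|_{\nu,\infty}\le 1$ and all $u \in U$, which is precisely the $\LUEB(G)$ condition. There is no substantial obstacle here; the only point requiring care is performing the change of variables so that the translation by $u$ lands on $\varphi$ instead of $f$, so that strong continuity of $\pi$ applies.
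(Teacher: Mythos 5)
Your proof is correct and follows essentially the same route as the paper's: the same norm bound $\Vert {}_{\varphi}f \Vert_{\infty} \leq \Vert \varphi \Vert_{\nu,1}$, the same left-invariant change of variables moving the right translation by $u$ onto $\varphi$, and the same appeal to strong continuity of the representation from Lemma~\ref{lemma:representation} to get the uniform estimate over the unit ball of $L^{\infty}(G,\nu)$. No issues to report.
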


\begin{proof} Let us abbreviate $B \defeq \{ {}_{\varphi}f \mid f \in L^{\infty}(G,\nu), \, \Vert f \Vert_{\nu,\infty} \leq 1 \}$. First of all, if $f \in L^{\infty}(G,\nu)$ and $\Vert f \Vert_{\nu,\infty} \leq 1$, then \begin{displaymath}
    \Vert {}_{\varphi}f \Vert_{\infty} \, = \, \sup\nolimits_{g \in G} \left\lvert \int f(gx) \varphi(x) \, \mathrm{d}\nu(x) \right\rvert \, \leq \, \Vert f \Vert_{\nu,\infty} \cdot \Vert \varphi \Vert_{\nu,1} \, \leq \, \Vert \varphi \Vert_{\nu,1} .
\end{displaymath} Hence, $B$ is a bounded subset of $(\ell^{\infty}(G),\Vert \cdot \Vert_{\infty})$. To prove left uniform equicontinuity, let $\varepsilon \in \R_{>0}$. By Lemma~\ref{lemma:representation}, we find $U \in \Neigh(G)$ such that \begin{displaymath}
    \forall u \in U \colon \qquad \left\lVert \left(\varphi \circ \lambda_{u^{-1}}\right) - \varphi \right\rVert_{\nu,1} \, \leq \, \varepsilon .
\end{displaymath} Now, if $u \in U$ and $f \in L^{\infty}(G,\nu)$ with $\Vert f \Vert_{\nu,\infty} \leq 1$, then \begin{align*}
    \left\lvert ({}_{\varphi}f)(gu) - ({}_{\varphi}f)(g) \right\rvert \, & = \, \left\lvert \int f(gux)\varphi(x) \, \mathrm{d}\nu(x) - \int f(gx)\varphi(x) \, \mathrm{d}\nu(x) \right\rvert \\
    &= \, \left\lvert \int f(gx)\varphi\!\left(u^{-1}x\right)\! \, \mathrm{d}\nu(x) - \int f(gx)\varphi(x) \, \mathrm{d}\nu(x) \right\rvert \\
    &\leq \, \int \lvert f(gx)\rvert\! \left\lvert\varphi\!\left(u^{-1}x\right)\! - \varphi(x)\right\rvert \! \, \mathrm{d}\nu(x) \\
    &\leq \, \left\lVert f \circ {\lambda_{g}} \right\rVert_{\nu,\infty} \cdot \left\lVert \left( \varphi \circ {\lambda_{u^{-1}}} \right) - \varphi \right\rVert_{\nu,1} \, \leq  \, \varepsilon
\end{align*} for all $g \in G$, i.e., \begin{displaymath}
    \left\lVert ( ({}_{\varphi}f) \circ \rho_{u})- ({}_{\varphi}f) \right\rVert_{\infty} \, \leq \, \varepsilon .
\end{displaymath} This shows that $B \in \LUEB(G)$. \end{proof}

Given a locally compact group $G$ and a left Haar measure $\nu$ on $G$, we let \begin{displaymath}
    \left. \Prob(G,\nu) \, \defeq \, \left\{ \varphi \in L^{1}(G,\nu) \, \right\vert \varphi \geq 0, \, \Vert \varphi \Vert_{\nu,1} = 1 \right\} .
\end{displaymath}

\begin{proposition}\label{proposition:ad.vs.skew} Let $G$ be a locally compact group equipped with a left Haar measure $\nu$. Let $\varphi \in \Prob(G,\nu)$ and consider \begin{displaymath}
    B \, \defeq \, \{ {}_{\varphi}f \mid f \in L^{\infty}(G,\nu), \, \Vert f \Vert_{\nu,\infty} \leq 1 \} \, \stackrel{\ref{lemma:lueb}}{\in} \, \LUEB(G) .
\end{displaymath} Then \begin{displaymath}
    \Delta(G) \, \longrightarrow \, \Prob(G,\nu), \quad \mu \, \longmapsto \, \mu\varphi
\end{displaymath} is $G$-left-equivariant and \begin{displaymath}
    \forall \mu,\mu' \in \Delta(G) \colon \quad \Vert \mu\varphi - \mu'\varphi \Vert_{\nu,1} \, = \, p_{B}(\mu-\mu') .
\end{displaymath} \end{proposition}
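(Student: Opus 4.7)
The proof is a bookkeeping exercise combining the module-structure formula $\mu\varphi \defeq \sum_{g\in\spt\mu}\mu(g)(\varphi\circ\lambda_{g^{-1}})$ with Lemma~\ref{lemma:shift} and the standard $L^{1}$--$L^{\infty}$ duality. I would proceed in three short steps.

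\textbf{Step 1: well-definedness and $G$-left-equivariance of $\mu\mapsto\mu\varphi$.} Since $\nu$ is a left Haar measure, $\lVert\varphi\circ\lambda_{g^{-1}}\rVert_{\nu,1}=\lVert\varphi\rVert_{\nu,1}=1$ and $\varphi\circ\lambda_{g^{-1}}\geq 0$ for every $g\in G$, so any convex combination $\mu\varphi$ with $\mu\in\Delta(G)$ lies in $\Prob(G,\nu)$. For equivariance, I would compute directly: for $h\in G$ and $\mu\in\Delta(G)$,
\begin{align*}
(h\mu)\varphi \, &= \, \sum\nolimits_{g\in G}\mu(h^{-1}g)\!\left(\varphi\circ\lambda_{g^{-1}}\right) \\
&= \, \sum\nolimits_{g'\in G}\mu(g')\!\left(\varphi\circ\lambda_{g'^{-1}}\circ\lambda_{h^{-1}}\right) \, = \, (\mu\varphi)\circ\lambda_{h^{-1}} \, = \, h(\mu\varphi),
\end{align*}
where the middle equality uses the substitution $g'=h^{-1}g$, and the last one recalls the action of $G$ on $L^{1}(G,\nu)$ from Lemma~\ref{lemma:representation}.

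\textbf{Step 2: the norm identity via duality and Lemma~\ref{lemma:shift}.} The core of the proof is to express $\lVert\mu\varphi-\mu'\varphi\rVert_{\nu,1}$ as a supremum of pairings against ${}_{\varphi}f$'s. By the standard isometric identification of $L^{\infty}(G,\nu)$ with the dual of $L^{1}(G,\nu)$,
\begin{displaymath}
\lVert\mu\varphi-\mu'\varphi\rVert_{\nu,1} \, = \, \sup\!\left\{\lvert\langle\mu\varphi-\mu'\varphi,f\rangle\rvert \,\middle\vert\, f\in L^{\infty}(G,\nu), \, \lVert f\rVert_{\nu,\infty}\leq 1\right\}\!.
\end{displaymath}
Since $\mu-\mu'\in\R[G]$, Lemma~\ref{lemma:shift} gives $\langle(\mu-\mu')\varphi,f\rangle=\langle\mu-\mu',{}_{\varphi}f\rangle$ for every such $f$. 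Therefore, by definition of $B$ and of the seminorm $p_{B}$,
\begin{displaymath}
\lVert\mu\varphi-\mu'\varphi\rVert_{\nu,1} \, = \, \sup\nolimits_{f:\lVert f\rVert_{\nu,\infty}\leq 1}\lvert\langle\mu-\mu',{}_{\varphi}f\rangle\rvert \, = \, \sup\nolimits_{h\in B}\lvert\langle\mu-\mu',h\rangle\rvert \, = \, p_{B}(\mu-\mu'),
\end{displaymath}
which is exactly the claimed equality.

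\textbf{Expected obstacles.} There are essentially none: once the module-structure formula is unwound and Lemma~\ref{lemma:shift} is in hand, everything reduces to $L^{1}$--$L^{\infty}$ duality. The only mild subtlety is making sure the substitution $g'=h^{-1}g$ in Step~1 respects the finite-support interpretation of $(h\mu)\varphi$, and that the pairing $\langle\mu-\mu',{}_{\varphi}f\rangle$ in Step~2 is the same integral/sum pairing used both in Section~\ref{section:preliminaries} (for elements of $\R[G]$ against bounded functions on $G$) and in Lemma~\ref{lemma:shift} — but both are literally $\sum_{g\in\spt(\mu-\mu')}(\mu-\mu')(g)({}_{\varphi}f)(g)$, so no clash arises.
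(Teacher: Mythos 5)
Your proposal is correct and follows essentially the same route as the paper: the norm identity via the $L^{1}$--$L^{\infty}$ duality combined with Lemma~\ref{lemma:shift}, and equivariance from the module structure (the paper cites associativity of the $\R[G]$-action abstractly, while you unwind the same computation explicitly).
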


\begin{proof} First of all, it is easy to see that the given map is indeed well defined. Moreover, since $L^{1}(G,\nu)$ is an $\R[G]$-module, \begin{displaymath}
    (g\mu)\varphi \, = \, (\delta_{g}\mu)\varphi \, = \, \delta_{g}(\mu\varphi) \, = \, (\mu\varphi) \circ \lambda_{g^{-1}}
\end{displaymath} for all $g \in G$ and $\mu \in \Delta(G)$, i.e., the mapping is $G$-left-equivariant. Finally, if $\mu,\mu' \in \Delta(G)$, then \begin{align*}
    \Vert \mu\varphi - \mu'\varphi \Vert_{\nu,1} \, &= \, \sup \{ \langle \mu\varphi - \mu'\varphi, f \rangle \mid f \in L^{\infty}(G,\nu), \, \Vert f \Vert_{\nu,\infty} \leq 1 \} \\
    &\stackrel{\ref{lemma:shift}}{=} \, \sup \{ \langle \mu - \mu', {}_{\varphi}f \rangle \mid f \in L^{\infty}(G,\nu), \, \Vert f \Vert_{\nu,\infty} \leq 1 \} \\
    &= \, p_{B}(\mu-\mu') .\qedhere
\end{align*} \end{proof}

\begin{theorem}\label{theorem:ad.vs.skew} A continuous action of a locally compact group on a compact space is AD-amenable if and only if it is skew-amenable. \end{theorem}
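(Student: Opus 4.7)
\textit{AD-amenable implies skew-amenable.} By Proposition~\ref{proposition:ad.definition}, pick a witness $(\mu_\iota)_{\iota\in I}$ of AD-amenability such that each $\tilde\mu_\iota\defeq\Xi_\nu^{\Lsh}\circ\mu_\iota\colon X\to\Meanu(G_{\Lsh})$ is UEB-continuous. Since $\Xi_\nu^{\Lsh}$ is $G$-left-equivariant (Remark~\ref{remark:haar.measure}) and trivially satisfies $|\Xi_\nu^{\Lsh}(\varphi)(f)|\leq\|f\|_\infty\|\varphi\|_{\nu,1}$, for every $g\in G$ and every $B\in\LUEB(G)$ one finds
\[
\sup_{x\in X}\sup_{f\in B}\bigl|(g\tilde\mu_\iota(x))(f)-\tilde\mu_\iota(gx)(f)\bigr|\,\leq\,\bigl(\sup\nolimits_{f\in B}\|f\|_\infty\bigr)\sup_{x\in X}\|g\mu_\iota(x)-\mu_\iota(gx)\|_{\nu,1},
\]
which tends to $0$ by AD-amenability applied to the compact set $\{g\}$. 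Hence $(\tilde\mu_\iota)_{\iota\in I}$ witnesses skew-amenability.

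\textit{Skew-amenable implies AD-amenable.} Pick a skew-amenability witness $(\mu_\iota)_{\iota\in I}$ with values in $\Delta(G)$ (Lemma~\ref{lemma:skew.cut.off}), fix any $\varphi\in\Prob(G,\nu)$, and set $\tilde\mu_\iota(x)\defeq\mu_\iota(x)\varphi\in\Prob(G,\nu)$. Since $\mu\mapsto\mu\varphi$ is bounded linear from $\R[G]$ into $L^1(G,\nu)$ and each $\mu_\iota$ takes values in a finite-dimensional simplex, each $\tilde\mu_\iota\colon X\to L^1(G,\nu)$ is continuous, hence $\sigma(C_0(G)^\ast,C_0(G))$-continuous. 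Proposition~\ref{proposition:ad.vs.skew} and Lemma~\ref{lemma:lueb} then yield $B_\varphi\in\LUEB(G)$ with
\[
\|g\tilde\mu_\iota(x)-\tilde\mu_\iota(gx)\|_{\nu,1}\,=\,p_{B_\varphi}\bigl(g\mu_\iota(x)-\mu_\iota(gx)\bigr)\qquad(g\in G,\,x\in X),
\]
so skew-amenability delivers, for each fixed $g\in G$, the pointwise convergence
\[
h_\iota(g)\defeq\sup_{x\in X}\|g\tilde\mu_\iota(x)-\tilde\mu_\iota(gx)\|_{\nu,1}\,\longrightarrow\,0\quad(\iota\to I).
\]

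The remaining task is to upgrade this pointwise-in-$g$ approximation to uniform convergence on compact subsets of $G$, as demanded by Definition~\ref{definition:ad.amenability}. Each $h_\iota$ is bounded by $2$ and continuous on $G$: for fixed $\iota$, the total support $\bigcup_{x\in X}\supp\mu_\iota(x)$ is finite, so Lemma~\ref{lemma:representation} makes $g\mapsto g\tilde\mu_\iota(x)$ continuous into $L^1(G,\nu)$ uniformly in $x\in X$, while the continuity of the action on the compact space $X$ combined with uniform continuity of $\tilde\mu_\iota\colon X\to L^1(G,\nu)$ makes $g\mapsto\tilde\mu_\iota(gx)$ continuous uniformly in $x$. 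Fix an arbitrary compact $K\subseteq G$. Pointwise convergence of the uniformly bounded continuous functions $h_\iota|_K$ implies, via the dominated convergence theorem, weak convergence $h_\iota\to 0$ in the Banach space $C(K)$. By Mazur's theorem the weak and norm closures of the convex hull of $\{h_\iota\}_\iota$ in $C(K)$ coincide, so one obtains convex combinations $\sum_j\lambda_j^{(k)}h_{\iota_j^{(k)}}\to 0$ uniformly on $K$. Passing to the corresponding witnesses $\tilde\mu^{(k)}\defeq\sum_j\lambda_j^{(k)}\tilde\mu_{\iota_j^{(k)}}\in C(X,\Prob(G,\nu))$, the triangle inequality yields $\sup_{g\in K,\,x\in X}\|g\tilde\mu^{(k)}(x)-\tilde\mu^{(k)}(gx)\|_{\nu,1}\to 0$, and a standard diagonalization over pairs $(K,\varepsilon)$ assembles these into a single net witnessing AD-amenability.

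The main obstacle is precisely this last step: skew-amenability guarantees only pointwise-in-$g$ approximation, and the moduli of continuity of the individual $h_\iota$ are not uniform in $\iota$, so a direct compactness/triangle-inequality argument fails. Day's averaging trick---convex combinations together with Mazur's theorem---is the standard device to circumvent this issue; the forward direction, by contrast, is essentially a one-line consequence of the contractive nature of $\Xi_\nu^{\Lsh}$ once Proposition~\ref{proposition:ad.definition} is in hand.
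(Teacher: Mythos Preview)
Your forward direction and the basic construction $\tilde\mu_\iota(x)=\mu_\iota(x)\varphi$ in the backward direction match the paper exactly. You go further in one respect: you observe that skew-amenability only yields $h_\iota(g)\to 0$ for each fixed $g$, whereas Definition~\ref{definition:ad.amenability} demands uniform convergence on compact subsets of $G$. The paper glosses over this, implicitly relying on the equivalence of the pointwise and compact-uniform formulations of AD-amenability (part of the ``many equivalent forms'' alluded to before the definition).

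Your explicit attempt to bridge this gap, however, contains a genuine error. The inference ``pointwise convergence of the uniformly bounded continuous $h_\iota|_K$ implies, via the dominated convergence theorem, weak convergence $h_\iota\to 0$ in $C(K)$'' is false for nets. For a counterexample, take $K=[0,1]$, let $I$ be the directed set of finite subsets $F\subseteq[0,1]$ ordered by inclusion, and choose $h_F\in C[0,1]$ with $0\le h_F\le 1$, $h_F\vert_F=0$, but $\int_0^1 h_F\ge\tfrac12$; then $h_F\to 0$ pointwise while $\int h_F\not\to 0$, so $h_F\not\to 0$ weakly in $C[0,1]$. Since your Mazur step rests on this weak convergence, the subsequent convex-combination and diagonalisation argument---though internally sound---never gets off the ground. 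The clean repair is to invoke the known pointwise/compact-uniform equivalence for AD-amenability (e.g.\ via the expectation or positive-type-function characterisations in \cite{AD23amenabilityexactness,MR1926869Delaroche}), which is what the paper is tacitly doing.
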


\begin{proof} Let $G$ be a locally compact group and let $\nu$ be a left Haar measure on $G$. Consider a continuous action of $G$ on a compact space $X$.

($\Longrightarrow$) Suppose that $G \curvearrowright X$ is AD-amenable. By Proposition~\ref{proposition:ad.definition}, there exists a witness $(\mu_{\iota})_{\iota \in I}$ of AD-amenability for $G \curvearrowright X$ such that, for each $\iota \in I$, the map ${\Xi_{\nu}^{\Lsh}} \circ {\mu_{\iota}} \colon X \to \Mean_u(G_{\Lsh})$ is $\UEB$-continuous. It now follows by Remark~\ref{remark:haar.measure} that the net $({\Xi_{\nu}^{\Lsh}} \circ {\mu_{\iota}})_{\iota \in I}$ is a witness of skew-amenability for $G \curvearrowright X$.


($\Longleftarrow$) Fix any $\varphi \in \Prob(G,\nu)$. According to Proposition~\ref{proposition:ad.vs.skew},  \begin{displaymath}
    \Delta(G) \, \longrightarrow \, \Prob(G,\nu), \quad \mu \, \longmapsto \, \mu\varphi
\end{displaymath} is well defined, $G$-left-equivariant, and uniformly continuous (with respect to the UEB uniformity on $\Delta(G) \subseteq \Meanu(G_{\Lsh})$ and the uniformity induced by $\Vert \cdot \Vert_{\nu,1}$ on $\Prob(G,\nu)$). Thus, if $\mu_{\iota} \colon X \to \Delta(G)$ $(\iota \in I)$ is a witness of skew-amenability for the action $G \curvearrowright X$ (in the sense of Lemma~\ref{lemma:skew.cut.off}), then the net \begin{displaymath}
    X \, \longrightarrow \, \Prob(G,\nu), \quad x \, \longmapsto \, \mu_{\iota}(x)\varphi \qquad (\iota \in I)
\end{displaymath} is a witness of AD-amenability for $G \curvearrowright X$. \end{proof}

\begin{corollary}\label{corollary:skew.implies.straight}
Every skew-amenable (or, equivalently, AD-amenable) continuous action of a locally compact group on a compact space is amenable.
\end{corollary}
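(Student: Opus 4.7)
The plan is to run exactly the same argument as for the ($\Longrightarrow$) direction of Theorem~\ref{theorem:ad.vs.skew}, but applied to the \emph{right} uniformity on $G$ instead of the left one. Since skew-amenability of the action $G \curvearrowright X$ is equivalent to AD-amenability by Theorem~\ref{theorem:ad.vs.skew}, I may start from an AD-amenable action.

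First, invoke Proposition~\ref{proposition:ad.definition} in its right-uniformity version to produce a witness $(\mu_{\iota})_{\iota \in I}$ of AD-amenability for $G \curvearrowright X$ such that, for every $\iota \in I$, the composition $\tilde\mu_\iota := \Xi_{\nu}^{\Rsh} \circ \mu_{\iota} \colon X \to \Meanu(G_{\Rsh})$ is $\UEB$-continuous, where $\Xi_\nu^\Rsh \colon \Prob(G,\nu) \to \Meanu(G_\Rsh)$ is the integration map of Remark~\ref{remark:haar.measure}. The candidate witness of amenability in the sense of Definition~\ref{def:amenableaction} will be $(\tilde\mu_\iota)_{\iota \in I}$.

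Next, I have to verify the almost-equivariance condition. Fix $g \in G$ and $B \in \RUEB(G)$, and denote by $p_B$ the associated UEB pseudo-norm on $\UCB(G_\Rsh)^\ast$. Using $G$-left-equivariance of $\Xi_\nu^\Rsh$ (Remark~\ref{remark:haar.measure}), for every $x \in X$,
\[
g\tilde\mu_\iota(x) - \tilde\mu_\iota(gx) \;=\; \Xi_\nu^\Rsh\bigl(g\mu_\iota(x)\bigr) - \Xi_\nu^\Rsh\bigl(\mu_\iota(gx)\bigr).
\]
The map $\Xi_\nu^\Rsh$ is uniformly continuous from $(\Prob(G,\nu),\|\cdot\|_{\nu,1})$ into $(\Meanu(G_\Rsh),\UEB)$, so for any $\varepsilon > 0$ there exists $\delta > 0$ such that $\|\varphi-\psi\|_{\nu,1} < \delta$ implies $p_B(\Xi_\nu^\Rsh(\varphi) - \Xi_\nu^\Rsh(\psi)) < \varepsilon$. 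AD-amenability gives $\sup_{x \in X}\|g\mu_\iota(x) - \mu_\iota(gx)\|_{\nu,1} \to 0$ (uniformly on compact sets of $G$, which is more than enough for a fixed $g$), hence for large enough $\iota$, $\sup_{x \in X} p_B(g\tilde\mu_\iota(x) - \tilde\mu_\iota(gx)) < \varepsilon$. This is exactly the defining estimate for amenability.

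There is essentially no hard step here: the entire content has already been packaged into Proposition~\ref{proposition:ad.definition} and Remark~\ref{remark:haar.measure}. The only subtle point worth flagging is the reason we cannot simply quote Remark~\ref{remark:haar.measure} with an arbitrary AD-amenability witness $(\mu_\iota)$: a priori, the maps $\mu_\iota$ are only $\sigma(C_0(G)^\ast,C_0(G))$-continuous into $\Prob(G,\nu)$, and we do not know this topology agrees with the one pulled back from the UEB topology. Proposition~\ref{proposition:ad.definition} is precisely what removes this obstacle, guaranteeing that $\tilde\mu_\iota$ is genuinely a continuous map into $\Meanu(G_\Rsh)$ as required by Definition~\ref{def:amenableaction}.
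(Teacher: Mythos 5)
Your proof is correct and follows the same route as the paper: convert skew-amenability to AD-amenability via Theorem~\ref{theorem:ad.vs.skew}, then use the right-uniformity version of Proposition~\ref{proposition:ad.definition} to get a UEB-continuous witness $\Xi_\nu^{\Rsh}\circ\mu_\iota$ and conclude via the equivariance and uniform continuity of $\Xi_\nu^{\Rsh}$ from Remark~\ref{remark:haar.measure}. You have simply written out explicitly the estimate that the paper leaves as ``analogous to the proof of Theorem~\ref{theorem:ad.vs.skew}($\Longrightarrow$)'', and your flag about why Proposition~\ref{proposition:ad.definition} is needed (the mismatch between the $\sigma(C_0(G)^*,C_0(G))$-topology and the UEB topology) is precisely the paper's own point.
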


\begin{proof} Thanks to Theorem~\ref{theorem:ad.vs.skew}, skew-amenability implies AD-amenability, which in turn implies amenability by an application of Proposition~\ref{proposition:ad.definition} and Remark~\ref{remark:haar.measure} (analogous to the application of Proposition~\ref{proposition:ad.definition} and Remark~\ref{remark:haar.measure} in the proof of Theorem~\ref{theorem:ad.vs.skew}($\Longrightarrow$)). \end{proof}

\subsection{AD-amenability vs.\ amenability for second-countable locally compact groups}

Our next goal is to prove that amenability implies AD-amenability for actions of locally compact second-countable groups. 

Let $G\curvearrowright X$ be a continuous action of a locally compact group on a compact space $X$. 
Many of the notions below can be reformulated using the language of locally compact groupoids, but for our purposes it will suffice to restrict ourselves to group actions. We will, however, use the suggestive notation $G\ltimes X$ to denote the product $G\times X$ equipped with the groupoid product $(h,gx)\circ (g,x) = (h,x)$. Notice that this groupoid structure corresponds to the direct product action of $G$ on $G\times X$ which we will use throughout.

\begin{definition}
A function $f\in C_b(G\ltimes X)$ is called right uniformly continuous if for every $\eps > 0$ there exists a neighborhood $U$ of the identity in $G$ such that for all $x\in X$, $g\in G$ and $u\in U$ we have $|f(g,x) - f(ug,x)| < \eps$. The $C^\ast$-algebra of right uniformly continuous functions on $G\ltimes X$ is denoted by $\RUCB(G\ltimes X)$.
\end{definition}

Given $f\in C_b(G\ltimes X)$, we denote by $f_x\in C_b(G)$ the function $g\mapsto f(g,x)$. It can be thought of as the restriction of $f$ to the source fiber of the groupoid. We observe that $f\in C_b(G\ltimes X)$ is in $\RUCB(G\ltimes X)$ if and only if for every $x\in X$ the function $f_x$ belongs to $\RUCB(G)$. Since $X$ is compact, the set $\{f_x\mid x\in X\}$ then belongs to $\RUEB(G)$.

\begin{definition}
The action $G\curvearrowright X$ is called \emph{measurewise amenable} if for every quasi-invariant measure $\mu$ on $X$ the $G$-von Neumann algebra $L^\infty(X)$ is amenable, i.e., there is a $G$-equivariant projection $L^\infty(G) \mathbin{\overline\otimes} L^\infty(X)\to L^\infty(X)$.
\end{definition}

The algebra of continuous functions with compact support $C_c(G\ltimes X)$ is naturally equipped with the convolution product
\[
(f_1\ast f_2)(g,x) \, = \, \int_{G} f_1(h,h^{-1}gx)f_2(h^{-1}g,x) d\nu(h),
\]
where $\nu$ is a left Haar measure on $G$ which we fix for the rest of the section. Given a quasi-invariant measure $\mu$ on $X$, we can complete it to the convolution algebra $L^\infty(X,\mu,L^1(G,\nu))$. We will denote by $D(G\ltimes X)$ the convex subset of $L^\infty(X,\mu,L^1(G,\nu))$ consisting of continuous non-negative functions $\phi$ with compact support such that $\norm{\phi(x)}_1 = 1$ for all $x\in X$. An approximate identity $e_n\in C_c(G,\nu)_+\subset L^1(G,\nu)_+$ with $\norm{e_n}_1 = 1$ for $L^1(G,\nu)$ naturally embeds into $D(G\ltimes X)$ (as functions constant in $x$) and gives an approximate identity for the convolution algebra $L^\infty(X,\mu,L^1(G,\nu))$.

\begin{lemma}\label{lem:amenable-impl-expectation}
Let $G\curvearrowright X$ be amenable and let $\mu$ be a quasi-invariant measure on $X$. Then there exists a unital positive $G$-equivariant contraction \begin{displaymath}
    E\colon \, \RUCB(G\ltimes X) \, \longrightarrow \, L^\infty(X,\mu) .
\end{displaymath} \end{lemma}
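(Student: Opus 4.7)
The plan is to construct $E$ as a pointwise weak-$\ast$ cluster point of operators $E_\iota$ built from a witness of amenability. First, by Lemma~\ref{lemma:cut.off} I fix a net $(\mu_\iota)_{\iota \in I}$ in $C(X,\Delta(G))$ witnessing amenability and having finite joint support $\bigcup_{x \in X}\spt(\mu_\iota(x))$ for each $\iota$. Writing $f_x(h) \defeq f(h,x)$ for $f \in \RUCB(G\ltimes X)$, I define
\[
E_\iota(f)(x) \, \defeq \, \langle \mu_\iota(x),\, f_x \rangle \, = \, \sum_{h \in G} \mu_\iota(x)(h)\, f(h,x),
\]
a finite sum varying continuously in $x$ because the finitely many coefficients $x \mapsto \mu_\iota(x)(h)$ are continuous and $f(h,\cdot) \in C_b(X)$. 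Hence $E_\iota(f) \in C(X) \subseteq L^\infty(X,\mu)$, and each $E_\iota$ is manifestly linear, positive, unital, and contractive.

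Next, I pass to a limit. Equipping $L^\infty(X,\mu)$ with the weak-$\ast$ topology induced by $L^1(X,\mu)$, the product space $\prod_{f \in \RUCB(G\ltimes X)} \{\phi \in L^\infty(X,\mu) \mid \|\phi\|_\infty \leq \|f\|_\infty\}$ is compact by Alaoglu and Tychonoff. A cluster point $E$ of $(E_\iota)$ in this pointwise weak-$\ast$ topology inherits linearity, positivity, unitality, and contractivity, yielding a unital positive contraction $E \colon \RUCB(G\ltimes X) \to L^\infty(X,\mu)$.

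It remains to establish $G$-equivariance. The pivotal observation is that for every $f \in \RUCB(G\ltimes X)$, the family $\{f_x \mid x \in X\}$ lies in $\RUEB(G)$: uniform boundedness by $\|f\|_\infty$ is automatic, and right-uniform equicontinuity of the family is precisely the definition of $\RUCB(G\ltimes X)$. With the action $(g \cdot f)(h,x) = f(g^{-1}h, g^{-1}x)$ on $\RUCB(G\ltimes X)$ coming from the direct product action and the convention $(g\mu)(h) = \mu(g^{-1}h)$ on $\Delta(G)$ from Section~\ref{section:preliminaries}, the substitution $h' = g^{-1}h$ yields
\[
E_\iota(g\cdot f)(x) - (g\cdot E_\iota(f))(x) \, = \, \bigl\langle g^{-1}\mu_\iota(x) - \mu_\iota(g^{-1}x),\, f_{g^{-1}x}\bigr\rangle.
\]
The right-hand side is bounded uniformly in $x$ by $\sup_{y\in X}|\langle g^{-1}\mu_\iota(x) - \mu_\iota(g^{-1}x), f_y\rangle|$, which tends to $0$ by Definition~\ref{def:amenableaction} applied with $g^{-1}$ in place of $g$ and $B \defeq \{f_y \mid y \in X\} \in \RUEB(G)$. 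Hence $E_\iota(g\cdot f) - g\cdot E_\iota(f) \to 0$ uniformly on $X$; since quasi-invariance of $\mu$ makes the $G$-action on $L^\infty(X,\mu)$ weak-$\ast$ continuous, the cluster point $E$ satisfies $E(g\cdot f) = g\cdot E(f)$.

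The hardest step is the last one: one must carefully identify $\{f_x \mid x \in X\} \in \RUEB(G)$ directly from the definition of $\RUCB(G\ltimes X)$, and then align the approximate $G$-equivariance of $\mu_\iota$ (stated against $\RUEB(G)$ families) with the desired approximate equivariance of the operators $E_\iota$. The weak-$\ast$ continuity of the $G$-action on $L^\infty(X,\mu)$ under quasi-invariance is standard but used crucially to pass from the uniform convergence $E_\iota(g\cdot f) - g\cdot E_\iota(f) \to 0$ to the equivariance of the limit.
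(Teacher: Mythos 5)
Your proposal is correct and follows essentially the same route as the paper: define $E_\iota(f)(x)=\mu_\iota(x)(f_x)$ from a witness of amenability, take a point-ultraweak (pointwise weak-$\ast$) cluster point, and deduce equivariance from the fact that $\{f_x\mid x\in X\}\in\RUEB(G)$ together with the witness property. Your extra details (cut-off witnesses via Lemma~\ref{lemma:cut.off} to ensure $E_\iota(f)\in C(X)$, and the weak-$\ast$ continuity of the $G$-action on $L^\infty(X,\mu)$ from quasi-invariance) only make explicit steps the paper leaves implicit.
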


\begin{proof}
Let $\mu_\iota$ be a witness of amenability of the action. We now define unital positive contractions
\[
E_\iota\colon \RUCB(G \ltimes X)\to L^\infty(X,\mu),\quad (E_\iota(f))(x)\coloneqq (\mu_\iota(x))(f_x).
\]
Taking a point-ultraweak limit point of $E_\iota$, we get a unital positive contraction $E\colon \RUCB(G\ltimes X)\to L^\infty(X,\mu)$. It is routine to check that it is $G$-equivariant: by definition we have \begin{align*}
    (E_\iota(g\cdot f))(x) \, &= \, (\mu_\iota(x))(g\cdot f_{g^{-1}x}), \\
    (g\cdot E_\iota(f))(x) \, &= \, E_\iota(f)(g^{-1}x) \, = \, (\mu_\iota(g^{-1}x))(f_{g^{-1}x}) .
\end{align*}
Since $\mu_\iota$ is a witness of amenability and the family $(f_x)_{x\in X}$ is in $\RUEB(G)$, we obtain that $E$ is $G$-equivariant, as desired.
\end{proof}

\begin{lemma}\label{lem:convolution-becomes-RUCB}
For all $f\in L^\infty(G\ltimes X,\nu\times \mu) $ and $\phi\in D(G\ltimes X)$, we have $\phi\ast f\in \RUCB(G\ltimes X)$.
\end{lemma}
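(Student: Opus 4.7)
The plan is to exploit left-invariance of $\nu$ to rewrite the increment $(\phi*f)(g,x)-(\phi*f)(ug,x)$ so that $\phi$ becomes the only shifted factor. The substitution $h\mapsto uh$ in the defining integral of $(\phi*f)(ug,x)$, combined with the identities $(uh)^{-1}(ug)=h^{-1}g$ and $(uh)^{-1}(ug)x=h^{-1}gx$, yields
\[
(\phi*f)(g,x)-(\phi*f)(ug,x) \, = \, \int_G \bigl[\phi(h,h^{-1}gx)-\phi(uh,h^{-1}gx)\bigr] f(h^{-1}g,x)\, d\nu(h).
\]
Taking absolute values and bounding by $\|f\|_\infty$ gives
\[
\bigl|(\phi*f)(g,x)-(\phi*f)(ug,x)\bigr| \, \leq \, \|f\|_\infty \int_G \sup_{y\in X}\!\bigl|\phi(uh,y)-\phi(h,y)\bigr|\, d\nu(h).
\]

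Next I fix a compact symmetric neighborhood $L$ of $e$ in $G$ and set $M \defeq L\cdot K_G$, where $K_G$ is the projection of $\supp(\phi)$ to $G$. For $u\in L$, the integrand in the last display vanishes outside the compact set $M$. Since $\phi$ is continuous with compact support on the locally compact Hausdorff space $G\times X$ and $X$ is compact, $\phi$ is uniformly continuous with respect to the product of the right uniformity on $G$ and the unique uniformity on $X$. Hence for every $\varepsilon>0$ there exists a neighborhood $U\subseteq L$ of $e$ with $\sup_{h\in G,\,y\in X}|\phi(uh,y)-\phi(h,y)|<\varepsilon/(\nu(M)+1)$ for all $u\in U$, so the increment is uniformly bounded in $(g,x)$ by $\|f\|_\infty\varepsilon$, which is precisely the right-uniform-continuity condition.

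Boundedness of $\phi*f$ is immediate from the crude estimate $|(\phi*f)(g,x)|\leq \|\phi\|_\infty\|f\|_\infty\nu(K_G)$, so all that remains is to exhibit $\phi*f$ as a genuine element of $C_b(G\ltimes X)$ rather than merely an $(\nu\times\mu)$-a.e.\ defined function. The right-uniform-continuity estimate above already forces equicontinuity in the $g$-variable; combined with continuity in the $x$-variable for each fixed $g$, this yields joint continuity. I expect the $x$-continuity to be the main subtlety, since $f$ is only an $L^\infty$-class and hence has no pointwise meaning in $x$; but it is handled, modulo $(\nu\times\mu)$-nullsets as is standard in this setup, by the smoothing effect of the continuous compactly supported kernel $\phi$, after which $\phi*f$ lies in $\RUCB(G\ltimes X)$.
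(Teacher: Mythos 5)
Your core argument is the same as the paper's: after the left-invariance substitution $h\mapsto uh$ only $\phi$ gets shifted, and one concludes from the uniform continuity of the continuous, compactly supported kernel $\phi$ (uniformly in the $X$-variable) together with a support bound; if anything, you carry out the support bookkeeping (the compact set $M$ and the factor $\nu(M)$) more explicitly than the paper, whose proof records the norm bound, performs the same substitution, invokes uniform continuity of $\phi$, and ends with ``the claim follows.'' The only place you diverge is your last paragraph: the appeal to a ``smoothing effect'' of $\phi$ to obtain continuity in the $x$-variable is not correct, since the convolution integrates only over the group variable and leaves the $x$-dependence of $f$ untouched, so for a general class $f\in L^\infty(G\ltimes X,\nu\times\mu)$ the function $\phi\ast f$ need not be continuous in $x$ in any literal sense. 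The paper's own proof, however, does not address continuity in $x$ at all -- it establishes precisely the uniform estimate on $G$-increments that you prove, which is also what is actually used afterwards via the slices $f_x$ -- so your argument matches the paper's to the same standard and is not missing any step that the paper supplies.
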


\begin{proof}
By definition, \begin{displaymath}
    (\phi\ast f)(g,x) \, = \, \int_{G} \phi(h,h^{-1}gx)f(h^{-1}g,x)d\nu(h).
\end{displaymath} Since $\norm{\phi\ast f}_\infty \leq \norm{\phi}_1\norm{f}_\infty$, and elements of $D(G\ltimes X)$ represented by continuous functions with compact support are dense in the latter, we can assume that $\phi$ is continuous and has compact support. Then for every $u\in G$ we have \begin{align*}
    (\phi\ast f)(ug,x) \, &= \, \int_{G} \phi(h,h^{-1}ugx)f(h^{-1}ug,x)d\nu(h) \\
    &= \, \int_{G} \phi(u^{-1}h,h^{-1}gx)f(h^{-1}g,x)d\nu(h).
\end{align*} Since $\phi$ is continuous and has compact support, for every $\eps > 0$ there is a neighbourhood $U$ of the identity in $G$ such that for all $g\in G$, all $u\in U$ and all $x\in X$ we have $|\phi(u^{-1}g,x) - \phi(g,x)| < \eps$. The claim follows. 
\end{proof}

To prove that amenability implies AD-amenability for locally compact second-countable groups, we will rely on the following theorem.
\begin{theorem}[Buss--Echterhoff--Willett, {\cite[Theorem 3.27]{BussEchterhoffWillett}}]\label{thm:buss-echterhoff-willett}
    The action $G\curvearrowright X$ of a second-countable locally compact group $G$ on a compact space $X$
    is AD-amenable if and only if it is measurewise amenable.
\end{theorem}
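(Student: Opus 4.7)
Given a witness $(\mu_{\iota})_{\iota}$ of AD-amenability and a quasi-invariant probability measure $\mu$ on $X$, the plan is to mimic Lemma~\ref{lem:amenable-impl-expectation} but using the $L^{1}$-almost-invariance available here. Define unital positive contractions
\[
E_{\iota}\colon L^{\infty}(G)\mathbin{\overline\otimes} L^{\infty}(X,\mu)\to L^{\infty}(X,\mu),\qquad (E_{\iota}F)(x)\,=\,\int_{G} F(g,x)\,\mu_{\iota}(x)(g)\,d\nu(g),
\]
which fix $L^{\infty}(X,\mu)$, and pass to a weak-$\ast$ cluster point $E$. Unit, positivity, and the bimodule property survive the limit. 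Equivariance follows because
$\|k\cdot E_{\iota}(F)-E_{\iota}(k\cdot F)\|_{\infty}$ is controlled by $\|F\|_{\infty}\cdot\sup_{x}\|k\mu_{\iota}(x)-\mu_{\iota}(kx)\|_{\nu,1}$, which vanishes along the net uniformly for $k$ ranging over any compact subset of $G$. Thus $E$ is the $G$-equivariant projection demanded by measurewise amenability.

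\textbf{Reverse direction, first stage.} For the converse, fix a quasi-invariant probability measure $\mu$ on $X$ with full support and let $E\colon L^{\infty}(G)\mathbin{\overline\otimes} L^{\infty}(X,\mu)\to L^{\infty}(X,\mu)$ be the equivariant projection provided by measurewise amenability. Disintegrating $E$ over $\mu$ yields a Borel family $(\zeta_{x})_{x\in X}$ of means on $L^{\infty}(G,\nu)$ satisfying $k\cdot\zeta_{x}=\zeta_{kx}$ for $\mu$-a.e.\ $x$ and every $k\in G$. I would then apply Day's classical trick: weak-$\ast$ approximation of each $\zeta_{x}$ by elements of $\Prob(G,\nu)$ followed by Mazur-type convex combinations, which converts weak almost-invariance into $L^{1}$-norm almost-invariance. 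The outcome is a sequence of $\mu$-Borel maps $\xi_{n}\colon X\to\Prob(G,\nu)$ such that, for each $k$ in a fixed countable dense subset of $G$, $\|k\xi_{n}(x)-\xi_{n}(kx)\|_{\nu,1}\to 0$ in $\mu$-measure. Second-countability of $G$ (hence separability of $L^{1}(G,\nu)$) is precisely what makes this a sequence rather than a net.

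\textbf{Reverse direction, second stage, and the main obstacle.} The last step is the continuity upgrade: produce out of the $\xi_{n}$ a net of \emph{continuous} maps $X\to\Prob(G,\nu)$ witnessing AD-amenability. Since $\Prob(G,\nu)$ is a Polish convex set (again using second-countability of $G$), Lusin's theorem yields, for each $\xi_{n}$ and each $\varepsilon>0$, a compact $K\subseteq X$ with $\mu(X\setminus K)<\varepsilon$ on which $\xi_{n}$ is continuous; a Tietze-style convex extension and a diagonal argument indexed by $n$, the countable dense set of $k$'s, and the parameter $\varepsilon$ would provide the required net. This patching is the main obstacle and the true technical content of the Buss--Echterhoff--Willett argument: convergence in $\mu$-measure does not, on its face, yield the uniform convergence in $x\in X$ demanded by Definition~\ref{definition:ad.amenability}, and one has to reconcile the measure-theoretic approximations across different Lusin compacts while respecting both the convexity of $\Prob(G,\nu)$ and the requirement of uniformity over $g$ in compact subsets of $G$. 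Both the full-support quasi-invariant $\mu$ and the second-countability of $G$ (through metrizability of $\Prob(G,\nu)$ and separability of $L^{1}(G,\nu)$) are essential; without them the continuity upgrade is not known to go through.
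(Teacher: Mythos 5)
This statement is one the paper does not prove at all: it is imported verbatim from Buss--Echterhoff--Willett \cite{BussEchterhoffWillett} and used as a black box in the proof of Theorem~\ref{theorem:amenable.implies.AD-amenable}, so there is no internal argument to compare yours with; your proposal has to stand on its own. Its forward half does: defining $E_{\iota}$ by integrating against $\mu_{\iota}(x)\,d\nu$ and taking a point--weak-$\ast$ cluster point is essentially the same device as Lemma~\ref{lem:amenable-impl-expectation}, and the equivariance estimate via $\sup_{x}\lVert k\mu_{\iota}(x)-\mu_{\iota}(kx)\rVert_{\nu,1}$ is correct. That, however, is the easy implication.

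The reverse implication is where the theorem lives, and there your argument has genuine gaps rather than technical loose ends. First, the disintegration of $E\colon L^{\infty}(G)\mathbin{\overline\otimes}L^{\infty}(X,\mu)\to L^{\infty}(X,\mu)$ into a Borel family of means $(\zeta_{x})_{x}$ is not available off the shelf: $E$ is merely positive (not normal) on a non-separable von Neumann algebra, so one must first cut down to a separable $G$-invariant C$^{*}$-subalgebra and then make a measurable selection of means, and the a.e.\ equivariance $k\cdot\zeta_{x}=\zeta_{kx}$ simultaneously for all $k$ needs a Fubini argument over $G\times X$; none of this is supplied. Second, and decisively, the ``continuity upgrade'' you flag as the main obstacle is exactly the content of the theorem, and the Lusin--Tietze patching you sketch does not obviously deliver it: Definition~\ref{definition:ad.amenability} demands $\sup_{x\in X}$ over \emph{all} of $X$, uniformly for $g$ in compacta, whereas convergence in $\mu$-measure gives no control off the Lusin compacts, and a single full-support quasi-invariant measure (which need not even exist when $X$ is not metrizable) cannot be expected to carry the whole hypothesis -- measurewise amenability quantifies over all quasi-invariant measures, and the known proofs use that. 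The actual argument in \cite{BussEchterhoffWillett} (building on Anantharaman-Delaroche--Renault) proceeds along a different route, via weak containment/injectivity and approximation by \emph{continuous} positive-type functions on $G\ltimes X$, rather than by pointwise regularization of measurable fields. As it stands, your reverse direction is a statement of the difficulty, not a proof of it.
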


\begin{theorem}\label{theorem:amenable.implies.AD-amenable}
 Let $G$ be a second-countable locally compact group. The action $G\curvearrowright X$ is amenable if and only if it is AD-amenable.
\end{theorem}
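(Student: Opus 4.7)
The direction AD-amenable $\Rightarrow$ amenable is already contained in Corollary~\ref{corollary:skew.implies.straight}, so the work lies in the converse. My plan is to route through measurewise amenability and apply Theorem~\ref{thm:buss-echterhoff-willett} of Buss--Echterhoff--Willett: it then suffices to show that, for every quasi-invariant probability measure $\mu$ on $X$, there exists a $G$-equivariant conditional expectation $P\colon L^{\infty}(G\ltimes X,\nu\times\mu)\to L^{\infty}(X,\mu)$.

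The building blocks are already in place. Lemma~\ref{lem:amenable-impl-expectation} provides a unital positive $G$-equivariant contraction $E\colon\RUCB(G\ltimes X)\to L^{\infty}(X,\mu)$, and Lemma~\ref{lem:convolution-becomes-RUCB} regularizes $L^{\infty}$-functions into $\RUCB$ ones via left-convolution with an element of $D(G\ltimes X)$. I would fix an approximate identity $(e_n)_{n\in\N}$ in $C_c(G,\nu)_+$ with $\|e_n\|_1=1$, viewed inside $D(G\ltimes X)$ as densities constant in $x$, and set
\[
P(f) \, \defeq \, \lim\nolimits_{n\to\mathcal{U}}E(e_n\ast f)
\]
as a weak-$\ast$ cluster point in $L^{\infty}(X,\mu)$ along an appropriate free ultrafilter $\mathcal{U}$. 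Unitality, positivity, and contractivity of $P$ descend from $E$ in the weak-$\ast$ limit, and the computation $e_n\ast f=f$, $E(f)=f$ for any $f\in L^{\infty}(X,\mu)$ regarded as constant in the $G$-coordinate (each fibre being a constant function evaluated by a probability measure) shows that $P$ restricts to the identity on $L^{\infty}(X,\mu)$, making $P$ a conditional expectation onto $L^{\infty}(X,\mu)$.

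The main obstacle will be $G$-equivariance. A change of variables in the groupoid convolution gives $g\cdot(e_n\ast f)-e_n\ast(g\cdot f)=\alpha_{n,g}\ast(\tilde g\cdot f)$, where $\alpha_{n,g}(h)=e_n(g^{-1}h)-\Delta(g^{-1})e_n(hg^{-1})$ is the difference of two $L^{1}$-probability densities concentrated near $g$, and $\tilde g$ denotes translation only in the $X$-coordinate. Combining with the $G$-equivariance of $E$, it remains to show that $\lim_{n\to\mathcal{U}}E(\alpha_{n,g}\ast(\tilde g\cdot f))=0$ weakly in $L^{\infty}(X,\mu)$ for every $g\in G$. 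Two routes are available: (a) arrange the approximate identity along a directed set indexed by pairs $(V,F)$ with $V\in\Neigh(G)$ and $F\Subset G$, choosing each $e_{(V,F)}$ (using continuity of conjugation and of $\Delta$) so that $\|\alpha_{(V,F),g}\|_1$ becomes small for every $g\in F$, then diagonalizing along a countable dense subset of $G$ furnished by second-countability; (b) exploit the specific structure of $E$ from Lemma~\ref{lem:amenable-impl-expectation} to show that $E(\alpha_{n,g}\ast\cdot)$ vanishes in the weak-$\ast$ limit directly, without requiring $\|\alpha_{n,g}\|_1\to 0$. Route (a) is transparent for unimodular SIN-type groups but delicate in general, where $\|\alpha_{n,g}\|_1$ need not tend to zero even for nicely chosen approximate identities; in those cases a refinement using (b) and the amenability witnesses in concert is likely needed. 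Once $G$-equivariance is secured, $L^{\infty}(X,\mu)$ is an amenable $G$-von Neumann algebra, measurewise amenability is established, and Theorem~\ref{thm:buss-echterhoff-willett} delivers AD-amenability.
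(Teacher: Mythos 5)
Your overall route is the same as the paper's: AD-amenability $\Rightarrow$ amenability via Corollary~\ref{corollary:skew.implies.straight}, and for the converse, measurewise amenability plus Theorem~\ref{thm:buss-echterhoff-willett}, with the expectation $E$ from Lemma~\ref{lem:amenable-impl-expectation} extended by an ultralimit $P(f)=\lim_{n\to\mathcal U}E(e_n\ast f)$, well defined by Lemma~\ref{lem:convolution-becomes-RUCB}. The genuine gap is exactly where you flag uncertainty: $G$-equivariance of $P$ is never established. Your route (a) would require $\lVert \alpha_{n,g}\rVert_{\nu,1}=\lVert (e_n\circ\lambda_{g^{-1}})-\Delta(g^{-1})\,(e_n\circ\rho_{g^{-1}})\rVert_{\nu,1}\to 0$, i.e.\ an approximate identity of $L^1(G,\nu)$ that is asymptotically central with respect to point masses; such approximate units exist only for a restricted class of groups (this is closely tied to the quasi-SIN property, which for connected groups forces amenability), so the method breaks down precisely in the interesting case of non-amenable $G$ --- as you yourself suspect --- and route (b) is only announced, not carried out. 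The paper closes this hole with a two-step argument that never needs $\lVert\alpha_{n,g}\rVert_{\nu,1}$ to be small: first it shows that $E'$ (your $P$) commutes with convolution by an \emph{arbitrary} $\psi\in L^{\infty}(Y,L^1(G,\nu))$, which only uses $\lVert e_n\ast\psi-\psi\ast e_n\rVert_1\to 0$ (true for any approximate identity, since $\psi$ is an algebra element rather than a point mass) together with the module property $E(\psi\ast h)=\psi\ast E(h)$ inherited from the $G$-equivariance of $E$; then it introduces the twisted kernels $e_n^{g}(h)\defeq\Delta(g^{-1})e_n(hg)$, which satisfy the exact identity $e_n^{g}\ast f=e_n\ast\lambda_g(f)$ and converge to translation, $e_n^{g}\ast\varphi\to\lambda_g(\varphi)$, so that $\lambda_g(E'(f))=\lim_n e_n^{g}\ast E'(f)=\lim_n E'(e_n^{g}\ast f)=\lim_n E(e_n\ast\lambda_g(f))=E'(\lambda_g(f))$. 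This is the mechanism your route (b) would have to reproduce.

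There is a second, smaller omission: you apply Theorem~\ref{thm:buss-echterhoff-willett} directly to an arbitrary compact $X$, whereas the measurewise-amenability machinery behind it requires a second-countable (standard) space. The paper first uses Proposition~\ref{prop:separability-reduction} to pass to a second-countable $G$-quotient $Y$ of $X$ on which the action is still amenable, proves AD-amenability of $G\curvearrowright Y$ as above, and then pulls the AD-amenability witnesses back to $X$ along the equivariant quotient map. Your argument should be run on $Y$, not on $X$. (Your extra check that $P$ restricts to the identity on $L^{\infty}(X,\mu)$ is correct but not needed for the paper's notion of measurewise amenability.)
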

\begin{proof}
Since AD-amenability implies amenability by Corollary \ref{corollary:skew.implies.straight}, we only need to prove the converse. To this end, we  reduce the situation to the case where the base space is second-countable. Indeed, by Proposition \ref{prop:separability-reduction}, there is a second-countable $G$-quotient $Y$ of $X$ on which the action is still amenable. We will now prove that $Y$ is AD-amenable.

By Theorem \ref{thm:buss-echterhoff-willett}, it suffices to show that the action on $Y$ is measurewise amenable. Let $\mu$ be a quasi-invariant measure on $Y$. By Lemma \ref{lem:amenable-impl-expectation}, we get a $G$-equivariant contraction $E\colon \RUCB(G\ltimes Y)\to L^\infty(Y,\mu)$, and we have to extend it to a $G$-equivariant contraction $L^\infty(G)\overline\otimes L^\infty(Y)\to L^\infty(Y)$.

Let $e_n\in D(G\ltimes Y)$ be an approximate identity for the convolution algebra $L^\infty(X,\mu,L^1(G,\nu))$, and take a free ultrafilter $\mc U$ on $\N$. For every $f\in L^\infty(G\ltimes Y)$, we define \begin{displaymath}
    E'(f) \, \defeq \, \lim\nolimits_{n\to\mc U} E(e_n\ast f),
\end{displaymath} the ultralimit being taken in the ultraweak topology. This is well defined, since $e_n\ast f \in \RUCB(G\ltimes Y)$ by Lemma \ref{lem:convolution-becomes-RUCB} and $E$ has norm $1$. By construction, $E'$ is a positive contraction, which has norm $1$ since $E'(1) = E(1) = 1$, as $e_n\ast 1 = 1$.

It remains to check equivariance. We first prove that $E'$ is equivariant with respect to convolution with an arbitrary $\psi\in L^\infty(Y,L^1
(G))$. Since $e_n$ is an approximate identity, we have
\[
\norm{e_n\ast\psi - \psi\ast e_n}_1 \, \longrightarrow \, 0 \quad (n \to \mc{U}),
\]
and therefore for every $f\in L^\infty(G\ltimes Y)$
\[
\norm{E(e_n\ast\psi\ast f) - E(\psi\ast e_n\ast f)} \, \longrightarrow \, 0 \quad (n \to \mc{U}).
\]
We thus get \begin{align*}
E'(\psi\ast f) \, &= \, \lim\nolimits_{n\to \mc U} E(e_n\ast\psi\ast f) \, = \, \lim\nolimits_{n\to \mc U} E(\psi\ast e_n\ast f) \\
&= \, \lim\nolimits_{n\to \mc U} \psi\ast E(e_n\ast f) \, = \, \psi\ast E'(f),
\end{align*} as desired.

Finally, we run another approximation argument to check that $E'$ is indeed $G$-equivariant. Fixing $g\in G$ and letting $e_n\in C_c(G,\nu)_+\subset L^1(G,\nu)$ be an approximate identity with $\norm{e_n}_1 = 1$, we let $e_n^g(h) \coloneqq \Delta(g^{-1})e_n(hg)$ and again embed it into $L^\infty(Y,L^1(G,\nu))$. We notice that for all $f\in L^\infty(Y,L^1(G,\nu))$ we have $e_n^g\ast f = e_n \ast \lambda_g(f)$.

As $e_n$ is an approximate identity for $L^1(G,\nu)$, it is routine to check that $e_n^g\ast \varphi \to \lambda_g(\varphi)$ in $L^\infty(Y,L^1(G,\nu))$ for all $\varphi\in L^\infty(Y,L^1(G,\nu))$. By duality, this implies  $e_n^g\ast f\to \lambda_g(f)$ ultraweakly for all $f\in L^\infty(G\ltimes Y,\mu\circ\nu)$. We now compute \begin{align*}
\lambda_g(E'(f)) \, &= \, \lim\nolimits_{n\to \mc U} e_n^g\ast E'(f) \, = \, \lim\nolimits_{n\to \mc U} E'(e_n^g\ast f) \\
&= \, \lim\nolimits_{n\to \mc U} E(e_n^g\ast f) \, = \, \lim\nolimits_{n\to \mc U} E(e_n\ast \lambda_g(f)) \, = \, E'(\lambda_g(f)),
\end{align*} as desired. We thus have checked that $G\lacts Y$ is measurewise amenable, and by Theorem \ref{thm:buss-echterhoff-willett} is follows to be AD-amenable. Since $Y$ is a $G$-quotient of $X$, we deduce that $G\lacts X$ is AD-amenable. This finishes the proof.
\end{proof}

The removal of separability assumptions can be sometimes technical or even impossible. Anyway, we believe that the answer to the following question should be positive.

\begin{question}
Is AD-amenability equivalent to amenability for actions of a locally compact group $G$ without the assumption that $G$ is second-countable? 
\end{question}


\section{Amenably embedded and well-placed subgroups}
\label{sec:emb}
We start with two definitions.
\begin{definition}[Monod] \label{def:amenemb} A topological subgroup $H$ of a topological group $G$ is said to be \emph{amenably embedded} in $G$ if the action of $H$ by left translations on $\Samuel (G_{\Rsh})$ is amenable. \end{definition}

Clearly, a topological group is exact if it is amenably embedded in itself.
\begin{remark}
Note that in contrast to the previous notions there does not seem to be a skew version of the previous definition, since the left-translation action of $H$ on $\Samuel(G_{\Lsh})$ is usually not even continuous.
\end{remark}

\begin{theorem}
    Let $H$ be an amenably embedded subgroup of a topological group $G$. If a continuous action $G \curvearrowright X$ on a compact space $X$ is amenable, then its restriction $H \curvearrowright X$ is amenable as well. In particular, if $G$ is amenable, then $H$ is amenable.
    \end{theorem}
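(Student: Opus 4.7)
The plan is to compose witnesses of amenability. Using Lemma~\ref{lemma:cut.off}, fix a witness $\mu_\iota \colon X \to \Delta(G)$ $(\iota \in I)$ of amenability of $G \lacts X$ with $\bigcup_{x \in X}\supp(\mu_\iota(x))$ finite for each $\iota$, and a witness $\nu_j \colon \Samuel(G_{\Rsh}) \to \Delta(H)$ $(j \in J)$ of amenability of the amenably embedded action $H \lacts \Samuel(G_{\Rsh})$ with $\bigcup_{\xi}\supp(\nu_j(\xi))$ finite for each $j$. Using the canonical embedding $G \hookrightarrow \Samuel(G_{\Rsh})$, define
\[
\eta_{\iota,j}(x) \, \defeq \, \sum\nolimits_{g \in G} \mu_\iota(x)(g) \, \nu_j(g) \, \in \, \Delta(H).
\]
Since $\mu_\iota(x)$ has uniformly finite support, this is a finite convex combination and $\eta_{\iota,j} \colon X \to \Delta(H)$ is continuous with $\bigcup_x \supp(\eta_{\iota,j}(x)) \subseteq \bigcup_{\xi}\supp(\nu_j(\xi))$ finite, i.e., a candidate witness for amenability of $H \lacts X$ in the sense of Lemma~\ref{lemma:cut.off}.

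Fixing $h \in H$, $B \in \RUEB(H)$, and $f \in B$, introduce the auxiliary function $\Phi_{j,f}(g) \defeq \nu_j(g)(f)$ for $g \in G$. A direct computation yields the splitting
\[
(h\eta_{\iota,j}(x))(f) - \eta_{\iota,j}(hx)(f) \, = \, \sum\nolimits_g \mu_\iota(x)(g)\bigl[(h\nu_j(g))(f) - \nu_j(hg)(f)\bigr] + \bigl((h\mu_\iota(x))(\Phi_{j,f}) - \mu_\iota(hx)(\Phi_{j,f})\bigr).
\]
The first summand is bounded in absolute value by $\sup_{\xi \in \Samuel(G_{\Rsh})}|(h\nu_j(\xi))(f) - \nu_j(h\xi)(f)|$, which tends to $0$ as $j \to J$ uniformly in $x \in X$ and $f \in B$ by amenability of $H \lacts \Samuel(G_{\Rsh})$. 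The principal technical obstacle is the second summand: to apply amenability of $G \lacts X$ one must verify that $\Phi_{j,B} \defeq \{\Phi_{j,f} \mid f \in B\}$ belongs to $\RUEB(G)$. This follows from two uniform-continuity facts. First, $\nu_j$ is continuous on the compact space $\Samuel(G_{\Rsh})$, hence uniformly continuous as a map into $\Meanu(H_{\Rsh})$ endowed with the UEB uniformity (whose defining pseudo-norms include $p_B$). Second, by construction of the Samuel compactification, the inclusion $G_{\Rsh} \hookrightarrow \Samuel(G_{\Rsh})$ is uniformly continuous. Composing, $\nu_j \colon G_{\Rsh} \to \Meanu(H_{\Rsh})$ is uniformly continuous, which unwinds exactly to uniform equicontinuity of $\Phi_{j,B}$ with respect to the right uniformity on $G$; boundedness is immediate since each $\nu_j(g)$ is a probability measure. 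Hence $\Phi_{j,B} \in \RUEB(G)$, and for each fixed $j$ the second summand tends to $0$ uniformly in $x$ and $f \in B$ as $\iota \to I$.

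A standard diagonal selection on the doubly-indexed family $(\eta_{\iota,j})_{(\iota,j)\in I\times J}$, choosing for each finite test-datum $(S \subseteq H,\, \mathcal{B} \subseteq \RUEB(H),\, \eps > 0)$ first a large enough $j$ to dominate the first summand by $\eps/2$ on $S\times \mathcal B$ and then a large enough $\iota$ to dominate the second summand by $\eps/2$, extracts a net witnessing amenability of $H \lacts X$. The ``in particular'' statement follows by applying the theorem to the trivially amenable action of $G$ on a singleton (amenable by Theorem~\ref{thm:amenablegroup}): the resulting amenable action of $H$ on a singleton is, again by Theorem~\ref{thm:amenablegroup} applied to $H$, equivalent to amenability of $H$.
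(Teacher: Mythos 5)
Your proposal is correct and follows essentially the same route as the paper: you compose the two witnesses (your $\eta_{\iota,j}$ is exactly the paper's $\nu_j\circ\mu_\iota$ via the linear extension), split the error into a term controlled by amenability of $H\curvearrowright\Samuel(G_{\Rsh})$ and a term controlled by amenability of $G\curvearrowright X$ after checking that $\{\Phi_{j,f}\mid f\in B\}\in\RUEB(G)$ via right-uniform continuity of $\nu_j|_G$, and finish with a diagonal selection. Your reindexing by finite test data is just a slightly more explicit version of the paper's choice of $j(i)$, so no substantive difference.
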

    \begin{proof}
    Let $\mu_i\colon X\to \Delta(G)$, $i\in I$, be a witness of amenability of the action $G \curvearrowright X$. Since $H$ is amenably embedded, we also get a witness of amenability for the action $H\curvearrowright \Samuel (G_{\Rsh})$ in form of a net $\nu_j\colon \Samuel (G_{\Rsh})\to \Delta(H)$, $j\in J$. By the universal property of the Samuel compactification, each restriction $\nu_j\colon G \to \Delta(H)$ is right-uniformly continuous. Slightly abusing notation, we keep this notation for the natural linear extension $\nu_j\colon \mathbb R[G]\to \mathbb R[H]$, and notice that because of right uniform continuity of $\nu_j$ this map is continuous with respect to the UEB topologies inherited from $\RUCB(G)^\ast$ resp. $\RUCB(H)^\ast$.
    
    We will now construct a map $j\colon I\to J$ such that the net $\nu_{j(i)}\circ \mu_i$ forms a witness of amenability for the action $H\curvearrowright X$.
    
    Since $\mu_i$ is a witness of amenability, for each $h\in H$ we have
    \[
        \left(h\otimes h^{-1}\right)\!\mu_{i} - \mu_{i} \, \longrightarrow \, 0, \quad i\to I.
    \]
    in $C(X,\RUCB(G)^*)$ with the topology of uniform convergence (see Lemma~\ref{lemma:witnesses.in.function.spaces}). Analogously, since $\nu_j$ is a witness of amenability, we get
    \[
        \left(h\otimes h^{-1}\right)\nu_{j} - \nu_{j} \, \longrightarrow \, 0, \quad j\to J
    \]
    in $C(\Samuel (G_{\Rsh}),\RUCB(H)^*)$. Since each $\mu_i$ has uniformly finite support, for each $i\in I$ there exists $j(i)\in J$ such that
    \[
        \left(\left(h\otimes h^{-1}\right)\nu_{j(i)}-\nu_{j(i)}\right)\circ \mu_i \to 0,\quad i\to I
    \]
    and
    \[
        \nu_{j(i)}\circ\left(\left(h\otimes h^{-1}\right)\!\mu_{i} - \mu_{i}\right) \, \longrightarrow \, 0, \quad i\to I
    \]
    in $C(X,\RUCB(H)^*)$. Altogether we get
    \begin{multline*}
     \left(h\otimes h^{-1}\right)(\nu_{j(i)}\circ \mu_i) - \nu_{j(i)}\circ \mu_i \\=     \left(\left(h\otimes h^{-1}\right)\nu_{j(i)}-\nu_{j(i)}\right)\circ \mu_i + \nu_{j(i)}\circ(\left(h\otimes h^{-1}\right)\!\mu_{i} - \mu_{i})\, \longrightarrow \, 0, \quad i\to I,    
    \end{multline*}
    as desired. 
    \end{proof}

\begin{definition} Let $G$ be any topological group. A topological subgroup $H \leq G$ is said to be \emph{well-placed} in $G$ if there exists an $H$-left-equivariant, uniformly continuous map from $G_{\Rsh}$ to $\Meanu(H_{\Rsh})_{\UEB}$. \end{definition} 

\begin{theorem}\label{theorem:amenable.actions.for.well.placed.subgroups}
Let $H$ be a well-placed topological subgroup of a topological group $G$. If a continuous action $G \curvearrowright X$ on a compact space $X$ is amenable, then the induced action $H \curvearrowright X$ is amenable, too.    
\end{theorem}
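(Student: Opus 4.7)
The plan is to push forward a witness of amenability for $G\curvearrowright X$ along the well-placed map. Let $\mu\colon G_{\Rsh}\to\Meanu(H_{\Rsh})_{\UEB}$ be an $H$-left-equivariant, uniformly continuous map extending the inclusion $H\hookrightarrow H$. For each $f\in\RUCB(H)$, define $F_f\colon G\to\R$ by $F_f(g)\defeq\mu(g)(f)$. The UEB-uniform continuity of $\mu$ means precisely that for every $B'\in\RUEB(H)$ the family $\{F_f\mid f\in B'\}$ is right-uniformly equicontinuous and bounded on $G$, so it belongs to $\RUEB(G)$.

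Next, extend $\mu$ linearly to $\tilde\mu\colon\Delta(G)\to\Meanu(H_{\Rsh})$ by $\tilde\mu(\nu)\defeq\sum_{g\in\spt\nu}\nu(g)\mu(g)$, a finite convex combination that again lies in $\Meanu(H_{\Rsh})$ by convexity. Using $H$-left-equivariance of $\mu$ one checks $\tilde\mu(h\nu)=h\tilde\mu(\nu)$ for all $h\in H$, $\nu\in\Delta(G)$. Moreover, for any $\nu,\nu'\in\Delta(G)$ and any $B'\in\RUEB(H)$,
\begin{displaymath}
p_{B'}\bigl(\tilde\mu(\nu)-\tilde\mu(\nu')\bigr) \,=\, \sup_{f\in B'}\bigl\lvert\langle\nu-\nu',F_f\rangle\bigr\rvert \,\leq\, p_{B}(\nu-\nu'),
\end{displaymath}
where $B\defeq\{F_f\mid f\in B'\}\in\RUEB(G)$. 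Hence $\tilde\mu$ is uniformly continuous with respect to the UEB topologies.

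Now let $(\mu_\iota\colon X\to\Delta(G))_{\iota\in I}$ be a witness of amenability for $G\curvearrowright X$ as in Lemma~\ref{lemma:cut.off}, and set $\nu_\iota\defeq\tilde\mu\circ\mu_\iota\colon X\to\Meanu(H_{\Rsh})$. Each $\nu_\iota$ is continuous by the previous paragraph. For $h\in H$, $B'\in\RUEB(H)$ and $B=\{F_f\mid f\in B'\}\in\RUEB(G)$, equivariance of $\tilde\mu$ yields
\begin{displaymath}
\sup_{x\in X}\sup_{f\in B'}\bigl\lvert(h\nu_\iota(x))(f)-\nu_\iota(hx)(f)\bigr\rvert \,=\, \sup_{x\in X}p_{B'}\bigl(\tilde\mu(h\mu_\iota(x))-\tilde\mu(\mu_\iota(hx))\bigr),
\end{displaymath}
which is bounded above by $\sup_{x\in X}p_{B}(h\mu_\iota(x)-\mu_\iota(hx))\to 0$ by amenability of $G\curvearrowright X$. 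Thus $(\nu_\iota)_{\iota\in I}$ is a witness of amenability for $H\curvearrowright X$.

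The only real content is the observation that the well-placed map $\mu$ carries $\RUEB(H)$-families back to $\RUEB(G)$-families via $f\mapsto F_f$, which is exactly the reformulation of UEB-uniform continuity; once this is in hand, everything else is a matter of linear extension and bookkeeping.
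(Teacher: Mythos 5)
Your proposal is correct and follows essentially the same route as the paper: your $F_f(g)=\mu(g)(f)$ is exactly the paper's ${f'}\circ\varphi$, your $\tilde\mu\circ\mu_\iota$ is the paper's pushed-forward witness $\nu_\iota(x)=\sum_{g\in\spt\mu_\iota(x)}\mu_\iota(x)(g)\varphi(g)$, and the key point in both arguments is that UEB-uniform continuity of the well-placed map sends each $B'\in\RUEB(H)$ to $\{F_f\mid f\in B'\}\in\RUEB(G)$, combined with $H$-equivariance. The only cosmetic remark is that the phrase ``extending the inclusion'' is not part of the formal definition of well-placedness used here and is never needed in your argument.
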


\begin{proof} Let us start off with some preparatory observations. For every function $f \in \UCB(H_{\Rsh})$, we let \begin{displaymath}
	f' \colon \, \Meanu(H_{\Rsh}) \, \longrightarrow \, \R, \quad \xi \, \longmapsto \, \xi(f) .
\end{displaymath} By definition of the UEB uniformity on $\Meanu(H_{\Rsh})$, we have \begin{equation}\label{double.ueb}
	\forall B \in \UEB(H_{\Rsh}) \colon \quad \{ f' \mid f \in B \} \in \UEB(\Meanu(H_{\Rsh})_{\UEB}) .
\end{equation} Moreover, for all $h \in H$, $f \in \UCB(H_{\Rsh})$ and $\xi \in \Meanu(H_{\Rsh})$, note that \begin{equation}\label{dual.action}
	(f \circ {\lambda_{h}})'(\xi) \, = \, \xi(f \circ {\lambda_{h}}) \, = \, (h\xi)(f) \, = \, f'(h\xi) .
\end{equation} Since $H$ is well-placed in $G$, there exists an $H$-left-equivariant, uniformly continuous map $\varphi \colon G_{\Rsh} \to \Meanu(H_{\Rsh})_{\UEB}$. From~\eqref{double.ueb} and uniform continuity of $\varphi$, we infer that \begin{equation}\label{composition.ueb}
	\forall B \in \UEB(H_{\Rsh}) \colon \quad \{ {f'} \circ \varphi \mid f \in B \} \in \UEB(G_{\Rsh}) .
\end{equation} Thanks to equivariance of $\varphi$, if $h \in H$ and $f \in \UCB(H_{\Rsh})$, then \begin{displaymath}
	({(f \circ {\lambda_{h}})'} \circ \varphi)(g) = (f \circ {\lambda_{h}})'(\varphi(g)) \stackrel{\eqref{dual.action}}{=} f'(h\phi(g)) = f'(\phi(hg)) = ({f'} \circ \varphi \circ {\lambda_{h}})(g)
\end{displaymath} for all $g \in G$. That is, \begin{equation}\label{crucial.equivariance}
	\forall h \in H \, \forall f \in \UCB(H_{\Rsh}) \colon \quad {(f \circ {\lambda_{h}})'} \circ \varphi = {f'} \circ \varphi \circ {\lambda_{h}} .
\end{equation}

Suppose now that $\mu_{\iota} \colon X \to \Delta(G)$ $(\iota \in I)$ is a witness of amenability for a continuous action $G \curvearrowright X$ on a compact space $X$. For each $\iota \in I$, consider the continuous map \begin{displaymath}
	\nu_{\iota} \colon \, X \, \longrightarrow \, \Meanu(H_{\Rsh}), \quad x \, \longmapsto \, \sum\nolimits_{g \in \spt \mu_{\iota}(x)} \mu_{\iota}(x)(g)\varphi(g) 
\end{displaymath} and note that \begin{equation}\label{rewriting}
	\forall x \in X \, \forall f \in \UCB(H_{\Rsh}) \colon \quad \nu_{\iota}(x)(f) = \mu_{\iota}(x)({f'} \circ \varphi) .
\end{equation} Consequently, if $h \in H$ and $B \in \UEB(H_{\Rsh})$, then $\{ {f'} \circ \varphi \mid f \in B \} \in \UEB(G_{\Rsh})$ by~\eqref{composition.ueb} and therefore \begin{align*}
	&\sup\nolimits_{x \in X} \sup\nolimits_{f \in B} \lvert (h\nu_{\iota}(x))(f) - \nu_{\iota}(hx)(f) \rvert \\
	& \qquad = \, \sup\nolimits_{x \in X} \sup\nolimits_{f \in B} \lvert \nu_{\iota}(x)(f \circ {\lambda_{h}}) - \nu_{\iota}(hx)(f) \rvert \\
	& \qquad \stackrel{\eqref{rewriting}}{=} \, \sup\nolimits_{x \in X} \sup\nolimits_{f \in B} \lvert \mu_{\iota}(x)({(f \circ {\lambda_{h}})'} \circ \varphi) - \mu_{\iota}(hx)({f'} \circ \varphi) \rvert \\
	& \qquad \stackrel{\eqref{crucial.equivariance}}{=} \, \sup\nolimits_{x \in X} \sup\nolimits_{f \in B} \lvert \mu_{\iota}(x)({f'} \circ \varphi \circ {\lambda_{h}}) - \mu_{\iota}(hx)({f'} \circ \varphi) \rvert \\
	& \qquad = \, \sup\nolimits_{x \in X} \sup\nolimits_{f \in B} \lvert (h\mu_{\iota}(x))({f'} \circ \varphi) - \mu_{\iota}(hx)({f'} \circ \varphi) \rvert \\
	& \qquad \longrightarrow \, 0 \quad (\iota \to I) .
\end{align*} Hence, $(\nu_{\iota})_{\iota \in I}$ is a witness of amenability for the action $H \curvearrowright X$, as desired. \end{proof}

\begin{corollary}\label{corollary:well-placed.subgroups.inherit} Every well-placed topological subgroup of an exact (resp., amenable) topological group is exact (resp., amenable). \end{corollary}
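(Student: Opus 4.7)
The plan is to deduce both statements as immediate consequences of Theorem~\ref{theorem:amenable.actions.for.well.placed.subgroups}, which already does all the heavy lifting: it promotes an amenable action of $G$ on a compact space $X$ to an amenable action of any well-placed subgroup $H$ on the same $X$. Given that, the corollary is essentially a translation exercise between the action-theoretic definitions and the group-theoretic ones.

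For the exactness statement, suppose $G$ is exact, meaning by Definition~\ref{def:amenableaction} that there is an amenable continuous action of $G$ on some non-empty compact space $X$. Since $H$ is well-placed in $G$, Theorem~\ref{theorem:amenable.actions.for.well.placed.subgroups} yields that the restricted action $H \curvearrowright X$ is amenable. This in turn witnesses the exactness of $H$.

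For the amenability statement, I would invoke Theorem~\ref{thm:amenablegroup}: $G$ is amenable if and only if the trivial action of $G$ on a one-point space $\{\ast\}$ is amenable. Applying Theorem~\ref{theorem:amenable.actions.for.well.placed.subgroups} to the action $G \curvearrowright \{\ast\}$, we conclude that the restricted action $H \curvearrowright \{\ast\}$ is amenable as well. A second application of Theorem~\ref{thm:amenablegroup}, this time to $H$, gives that $H$ is amenable.

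There is no real obstacle here; the entire technical content sits in Theorem~\ref{theorem:amenable.actions.for.well.placed.subgroups}, where the equivariant uniformly continuous map $\varphi \colon G_{\Rsh} \to \Meanu(H_{\Rsh})_{\UEB}$ was used to pass from witnesses in $\Delta(G)$ to witnesses in $\Meanu(H_{\Rsh})$. The corollary is simply the specialization of that theorem to the two canonical situations (an arbitrary compact space, respectively a singleton) that characterize exactness and amenability.
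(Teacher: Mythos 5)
Your proposal is correct and follows exactly the intended deduction: the paper states this corollary without separate proof precisely because it is the immediate specialization of Theorem~\ref{theorem:amenable.actions.for.well.placed.subgroups} to an arbitrary witnessing action (for exactness) and to the singleton action via Theorem~\ref{thm:amenablegroup} (for amenability). Nothing is missing.
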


Here is a natural example.

\begin{proposition}\label{proposition:open.subgroups} Any open subgroup of a topological group is well-placed in the latter. In particular, in a discrete group any subgroup is well-placed. \end{proposition}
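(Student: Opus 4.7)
The strategy is to define $\varphi\colon G\to \Meanu(H_{\Rsh})$ as a Dirac mass located at the ``$H$-part'' of $g$, once we fix a transversal for the action of $H$ on $G$ by left translation.

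Concretely, choose a set $S\subseteq G$ of representatives of the right cosets $H\backslash G$ such that $e\in S$ (so the coset $H$ itself is represented by $e$), and write $s(g)$ for the unique element of $S$ lying in $Hg$. For every $g\in G$ there is then a unique $h_{g}\in H$ with $g=h_{g}s(g)$; explicitly, $h_{g}=gs(g)^{-1}$. Set
\begin{displaymath}
    \varphi\colon\, G\,\longrightarrow\,\Meanu(H_{\Rsh}),\qquad g\,\longmapsto\,\delta_{h_{g}}.
\end{displaymath}
For $h\in H$ one has $s(hg)=s(g)$ and hence $h_{hg}=h h_{g}$, which yields $\varphi(hg)=\delta_{h h_{g}}=h\varphi(g)$, so $\varphi$ is $H$-left-equivariant. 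It also extends the inclusion $H\hookrightarrow \Meanu(H_{\Rsh})$ because $s(h)=e$ and thus $\varphi(h)=\delta_{h}$ for $h\in H$.

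The main point is uniform continuity of $\varphi$ as a map $G_{\Rsh}\to \Meanu(H_{\Rsh})_{\UEB}$. Here is where openness of $H$ enters: since $H$ is open in $G$, the set $E_{U}\defeq\{(x,y)\in G\times G\mid xy^{-1}\in U\}$ is an entourage of $G_{\Rsh}$ for every open neighborhood $U$ of $e$ contained in $H$. If $(x,y)\in E_{U}$, then $xy^{-1}\in H$, so $Hx=Hy$, hence $s(x)=s(y)$ and
\begin{displaymath}
    h_{x}h_{y}^{-1}\,=\,xs(x)^{-1}s(y)x^{-1}\cdot xy^{-1}\,=\,xy^{-1}\,\in\,U.
\end{displaymath}
Now given $B\in \UEB(H_{\Rsh})$ and $\varepsilon>0$, pick an open neighborhood $U$ of $e$ in $H$ witnessing $\varepsilon$-equicontinuity of $B$ with respect to the right uniformity of $H$; then for $(x,y)\in E_{U}$ one has
\begin{displaymath}
    p_{B}(\varphi(x)-\varphi(y))\,=\,\sup\nolimits_{f\in B}|f(h_{x})-f(h_{y})|\,\leq\,\varepsilon,
\end{displaymath}
since $h_{x}h_{y}^{-1}\in U$. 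This shows that $\varphi$ is uniformly continuous with respect to the UEB uniformity on $\Meanu(H_{\Rsh})$, so $H$ is well-placed in $G$.

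The ``in particular'' part is immediate: in a discrete group every subgroup is open. There is no genuine obstacle beyond organizing the cosets correctly; the only subtlety is noticing that openness of $H$ forces the (a priori discontinuous) choice map $g\mapsto h_{g}$ to be locally a right translation on each coset, and hence uniformly continuous for the right uniformity.
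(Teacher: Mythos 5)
Your proof is correct and follows essentially the same route as the paper: both choose a transversal for the right cosets $H\backslash G$, send $g$ to the $H$-component $gs(g)^{-1}$ (equivalently, the Dirac mass at it), and use openness of $H$ to turn a right-uniformity entourage of $G$ into one landing inside $H$, where the computation $h_xh_y^{-1}=xy^{-1}$ gives uniform continuity. The only cosmetic difference is that you spell out the UEB estimate $p_B(\delta_{h_x}-\delta_{h_y})\leq\varepsilon$ explicitly, whereas the paper stops at right-uniform continuity of the map $G\to H$ itself.
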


\begin{proof} Let $H$ be an open subgroup of a topological group $G$. Choose any section $\sigma$ for the quotient map $G \to \{ Hx \mid x \in G \}, \, x \mapsto Hx$. Then the map \begin{displaymath}
    \varphi \colon \, G \, \longrightarrow \, H, \quad x \, \longmapsto \, x\sigma(Hx)^{-1}
\end{displaymath} is $H$-left-equivariant: if $h \in H$, then \begin{displaymath}
    \varphi(hx) \, = \, hx\sigma(Hhx)^{-1} \, = \, hx\sigma(Hx)^{-1} \, = \, h\varphi(x)
\end{displaymath} for all $x \in G$. To see that $\varphi$ is also right-uniformly continuous, let $U \in \Neigh(H)$. Since $H$ is open in $G$, we have $U \in \Neigh(G)$. Now, for all $x,y \in G$ with $xy^{-1} \in U$, we see that $xy^{-1} \in H$ and so $Hx=Hy$, wherefore \begin{displaymath}
    \varphi(x)\varphi(y)^{-1} \, = \, x\sigma(Hx)\sigma(Hy)^{-1}y^{-1} \, = \, xy^{-1} \, \in \, U .
\end{displaymath} This proves right-uniform continuity of $\varphi$, hence completing the argument. \end{proof}

Note that well-placedness does not imply amenable embeddability: while any subgroup of a discrete group is well-placed by Proposition~\ref{proposition:open.subgroups}, non-exact groups are not amenably embedded in any supergroup. It would interesting to know if an exact discrete group is amenably embedded in all of its supergroups.

\vspace{0.2cm}

We now turn to the left uniformity.

\begin{definition} Let $G$ be any topological group. A topological subgroup $H \leq G$ is said to be \emph{skew-well-placed} in $G$ if there exists an $H$-left-equivariant, uniformly continuous map from $G_{\Lsh}$ to $\Meanu(H_{\Lsh})_{\UEB}$. \end{definition} 

\begin{proposition}\label{proposition:cocompact lattices}
    Let $G$ be a locally compact group and $\Gamma < G$ a cocompact lattice. Then $\Gamma$ is skew-well-placed in $G$.
    \end{proposition}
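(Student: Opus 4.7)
The plan is to construct a Bruhat-style continuous partition of unity on $G$ indexed by $\Gamma$ and use it to define a $\Gamma$-left-equivariant map $\varphi \colon G \to \Prob(\Gamma) = \Meanu(\Gamma_\Lsh)$. (By Remark~\ref{remark:discrete.uniformity}, discreteness of $\Gamma$ identifies $\Meanu(\Gamma_\Lsh)_{\UEB}$ with $\Prob(\Gamma)$ equipped with the $\ell^1$-norm uniformity, so it suffices to produce $\varphi$ as an $\ell^1$-uniformly continuous, $\Gamma$-equivariant map.)

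First, since $\Gamma \backslash G$ is compact, I pick a compact set $K_0 \subseteq G$ with $\Gamma K_0 = G$ and a function $\tilde h \in C_c(G)$ with $\tilde h \geq 0$ and $\tilde h > 0$ on $K_0$. The sum $s(g) \defeq \sum_{\gamma \in \Gamma} \tilde h(\gamma^{-1} g)$ is locally finite (as $\supp \tilde h$ is compact and $\Gamma$ is closed and discrete), continuous, strictly positive, and $\Gamma$-invariant under left translation, so $h \defeq \tilde h / s$ lies in $C_c(G)$ and satisfies $\sum_{\gamma \in \Gamma} h(\gamma^{-1} g) = 1$ for every $g \in G$. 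I then set
\[
\varphi(g) \defeq \sum_{\gamma \in \Gamma} h(\gamma^{-1} g)\,\delta_\gamma \in \Delta(\Gamma).
\]
A direct substitution $\gamma \mapsto \gamma_0 \gamma'$ shows $\varphi(\gamma_0 g) = \gamma_0 \varphi(g)$ for every $\gamma_0 \in \Gamma$, yielding $\Gamma$-left-equivariance.

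The crucial point for uniform continuity from $G_{\Lsh}$ to $(\Prob(\Gamma), \|\cdot\|_1)$ is the identity $(\gamma^{-1} g_1)^{-1}(\gamma^{-1} g_2) = g_1^{-1} g_2$: the shifted pairs remain at exactly the same left-uniform ``distance,'' which is precisely why the skew version of well-placedness is natural here. Fix any compact neighborhood $U_0$ of $e \in G$, and let $K \defeq \supp h$. If $g_1^{-1} g_2 \in U_0$, then any $\gamma$ contributing to $\|\varphi(g_1) - \varphi(g_2)\|_1$ lies in $g_1 L$ with $L \defeq (\{e\} \cup U_0) K^{-1}$ compact. Since $\Gamma$ is closed and discrete, $\Gamma \cap L^{-1}L$ is finite (compact and discrete) of some cardinality $N$; a standard left-translation argument yields $|g_1 L \cap \Gamma| \leq N$ uniformly in $g_1$. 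As $h \in C_c(G) \subseteq \LUCB(G)$, I choose a neighborhood $U \subseteq U_0$ of $e$ such that $x_1^{-1} x_2 \in U$ implies $|h(x_1) - h(x_2)| < \varepsilon/N$, and then
\[
\|\varphi(g_1) - \varphi(g_2)\|_1 = \sum_{\gamma \in \Gamma} |h(\gamma^{-1} g_1) - h(\gamma^{-1} g_2)| \leq N \cdot \tfrac{\varepsilon}{N} = \varepsilon
\]
whenever $g_1^{-1} g_2 \in U$, which establishes the desired uniform continuity.

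The main obstacle is securing the uniform finiteness bound $N$: the number of active terms in the sum must be controlled independently of $g_1, g_2$. This is precisely where discreteness plus closedness of $\Gamma$ (so that $\Gamma$ meets any compact set in a finite set) combines with compactness of $\supp h$ (provided by cocompactness) to do the work. Note also that the analogous construction in the straight (right) sense would fail, since $(\gamma^{-1} g_2)(\gamma^{-1} g_1)^{-1} = \gamma^{-1}(g_2 g_1^{-1})\gamma$ varies with $\gamma$, spoiling any uniform estimate.
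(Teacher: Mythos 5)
Your proof is correct and follows essentially the same route as the paper: a Bruhat-type partition of unity on $G$ indexed by $\Gamma$ (compactly supported bump, normalized by the $\Gamma$-invariant sum), $\Gamma$-left-equivariance by reindexing, and left-uniform continuity via the identity $(\gamma^{-1}g_1)^{-1}(\gamma^{-1}g_2)=g_1^{-1}g_2$ together with a uniform bound on the number of active terms coming from discreteness of $\Gamma$ and compactness of the support. Your bookkeeping of the multiplicity bound (using $L=(\{e\}\cup U_0)K^{-1}$ to cover terms active at either point) is in fact slightly more careful than the paper's, but the argument is the same.
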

    \begin{proof}
    
    Since $\Gamma$ is cocompact, there exists a compact subset $K\subseteq G$ such that $\bigcup_{h\in\Gamma} hK = G$. Let $\theta\in C_c(G,[0,1])$ be a function with compact support such that $\theta|_K$ is positive. Then, for each $x\in G$ there exists an $h\in \Gamma$ such that $(\lambda_h \theta)(x) > 0$. We now claim that there exists an $N > 0$ such that for every $x\in G$ there are at most $N$ elements of $\Gamma$ with $(\lambda_h \theta)(x) > 0$. Indeed, denoting $K'$ the support of $\theta$ we observe that $(K')^2$ is compact; since $\Gamma$ is discrete, it contains at most $N$ elements of $\Gamma$. The condition we $(\lambda_h \theta)(x) > 0$ is equivalent to $x\in hK'$, and if $x\in hK'\cap h'K'$, we deduce that $(h')^{-1}h\in (K')^2$; this proves the claim. 
    
    Thus, for $h \in \Gamma$ and $x\in G$, we can define
    \[
    \psi_h(x) \, \defeq \, \frac{(\lambda_h \theta)(x)}{\sum_{h\in \Gamma}(\lambda_h \theta)(x)}.
    \]
    This ensures that the family $(\psi_h)_{h\in\Gamma}$ forms a uniformly finite partition of unity on $G$. Moreover, since $\psi_h = \lambda_h \psi_1$ for all $h\in \Gamma$, we observe that this partition of unity is $\Gamma$-left-equivariant: $\lambda_{h}\psi_{h'} = \psi_{hh'}$ for all $h,h'\in\Gamma$.
    
We now can define the map \begin{displaymath}
    \varphi\colon \, G \, \longrightarrow \, P(\Gamma), \quad x \,\longmapsto \, \sum\nolimits_{h\in\Gamma} \psi_h(x)\delta_h .
\end{displaymath} It is $\Gamma$-left-equivariant since for all $h\in\Gamma$ we have \begin{align*}
    \varphi(hx) \, &= \, \sum\nolimits_{h'\in \Gamma}\psi_{h'}(hx)\delta_{h'} \, = \, \sum\nolimits_{h'\in \Gamma}\psi_{h^{-1}h'}(x)\delta_{h'} \\
    &= \, \sum\nolimits_{h'\in \Gamma}\psi_{h'}(x)\delta_{hh'} \, = \, h\cdot \varphi(x).
\end{align*} Let us now check that the map $\varphi$ is left-uniformly continuous. Indeed, the function $\psi_1$ is compactly supported and therefore left-uniformly continuous, that is, for every $\eps > 0$ there is a neighbourhood $U$ of the identity such that for all $u\in U$ we have $\sup_{x\in G}|\psi_1(xu)-\psi_1(x)| < \eps$. It follows that for every $h\in \Gamma$ we get $\sup_{x\in G}|\psi_h(xu)-\psi_h(x)| < \eps$.
    
    Since as observed above, the sum
    \[
        \sum\nolimits_{h\in\Gamma} \psi_h(x)\delta_h
    \]
    has at most $N$ summands for every $x\in G$, we get 
    \[
    \sup\nolimits_{x\in G}\norm{\varphi(xu)-\varphi(x)}_1 \, \leq \, N \sup\nolimits_{x\in G} |\psi_h(xu) - \psi_h(x)| \, < \, N\eps,
    \]
    as desired.
    \end{proof}

\begin{theorem}\label{theorem:skew.amenable.actions.for.skew.well.placed.subgroups}
Let $H$ be a skew-well-placed topological subgroup of a topological group $G$. If a continuous action $G \curvearrowright X$ on a compact space $X$ is skew-amenable, then the induced action $H \curvearrowright X$ is skew-amenable, too. \end{theorem}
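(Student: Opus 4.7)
The plan is to imitate, mutatis mutandis, the proof of Theorem~\ref{theorem:amenable.actions.for.well.placed.subgroups}, replacing the right uniformities with the left uniformities throughout. By skew-well-placedness of $H$ in $G$, we have at our disposal an $H$-left-equivariant, uniformly continuous map $\varphi \colon G_{\Lsh} \to \Meanu(H_{\Lsh})_{\UEB}$. Given a witness of skew-amenability $\mu_{\iota} \colon X \to \Delta(G)$ for $G \curvearrowright X$ (as in Lemma~\ref{lemma:skew.cut.off}), I would push forward through $\varphi$ to define
\[
\nu_{\iota} \colon \, X \, \longrightarrow \, \Meanu(H_{\Lsh}), \quad x \, \longmapsto \, \sum\nolimits_{g \in \spt \mu_{\iota}(x)} \mu_{\iota}(x)(g)\varphi(g),
\]
and then verify that $(\nu_{\iota})_{\iota \in I}$ is a witness of skew-amenability for $H \curvearrowright X$.

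As in the well-placed case, for each $f \in \UCB(H_{\Lsh})$ I set $f'(\xi) \defeq \xi(f)$ and record two ingredients. First, by the very definition of the UEB uniformity one has $\{ f' \mid f \in B \} \in \UEB(\Meanu(H_{\Lsh})_{\UEB})$ for every $B \in \LUEB(H)$, and so uniform continuity of $\varphi$ upgrades this to $\{ {f'} \circ \varphi \mid f \in B \} \in \LUEB(G)$. Second, I need the equivariance identity ${(f \circ \lambda_{h})'} \circ \varphi = {f'} \circ \varphi \circ \lambda_{h}$ for $h \in H$ and $f \in \UCB(H_{\Lsh})$. This rests on the stability of $\LUCB(H)$ under precomposition with $\lambda_{h}$ (immediate from $\Vert (f \circ \lambda_{h}) \circ \rho_{u} - f \circ \lambda_{h}\Vert_{\infty} = \Vert f \circ \rho_{u} - f\Vert_{\infty}$) together with the $H$-left-equivariance of $\varphi$, and is formally the same as equations~\eqref{dual.action}--\eqref{crucial.equivariance} transposed to the left-uniform setting.

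With the identity $\nu_{\iota}(x)(f) = \mu_{\iota}(x)({f'} \circ \varphi)$ at hand, the final estimate is a one-line computation: for any $h \in H$ and $B \in \LUEB(H)$,
\begin{align*}
\sup\nolimits_{x \in X} \sup\nolimits_{f \in B} \lvert (h\nu_{\iota}(x))(f) - \nu_{\iota}(hx)(f) \rvert \, = \, \sup\nolimits_{x \in X} \sup\nolimits_{f \in B} \lvert (h\mu_{\iota}(x))({f'} \circ \varphi) - \mu_{\iota}(hx)({f'} \circ \varphi) \rvert,
\end{align*}
and the right-hand side tends to $0$ along $I$ because $\{{f'} \circ \varphi \mid f \in B\} \in \LUEB(G)$ and $(\mu_{\iota})_{\iota \in I}$ witnesses skew-amenability of $G \curvearrowright X$. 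I do not anticipate any serious obstacle: the entire argument is a direct symmetric translation of the well-placed case, with the only point requiring care being the (trivial) verification that $\LUCB(H)$ and $\LUEB(H)$ are preserved under precomposition with $\lambda_{h}$ for $h \in H$, which ensures that the dual-action identity used in the computation continues to make sense in the left-uniform framework.
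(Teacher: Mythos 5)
Your proposal is correct and is exactly the argument the paper intends: its proof of this theorem is simply the statement that it is completely analogous to Theorem~\ref{theorem:amenable.actions.for.well.placed.subgroups}, and you have carried out precisely that left-uniform transposition, including the one point genuinely worth checking (stability of $\LUCB(H)$ and $\LUEB(H)$ under precomposition with $\lambda_{h}$, so that the dual-action identity and the UEB estimates go through).
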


\begin{proof} The proof is completely analogous to the one of Theorem~\ref{theorem:amenable.actions.for.well.placed.subgroups}. \end{proof}

\begin{corollary}\label{corollary:skew.exactness} A skew-well-placed topological subgroup of a skew-exact (resp., skew-amenable) topological group is skew-exact (resp., skew-amenable). \end{corollary}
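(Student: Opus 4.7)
The plan is to derive this corollary as a direct application of Theorem~\ref{theorem:skew.amenable.actions.for.skew.well.placed.subgroups}, handling the two parenthetical variants separately but by essentially the same mechanism.

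For the skew-exact statement, I would start from the hypothesis that $G$ is skew-exact, i.e., there exists a non-empty compact space $X$ carrying a continuous action $G \curvearrowright X$ which is skew-amenable. Restricting to $H$ yields a continuous action $H \curvearrowright X$, and since $H$ is skew-well-placed in $G$, Theorem~\ref{theorem:skew.amenable.actions.for.skew.well.placed.subgroups} immediately shows that $H \curvearrowright X$ is skew-amenable. Because $X$ is non-empty and compact, this witnesses that $H$ is skew-exact.

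For the skew-amenable statement, I would invoke the characterization of skew-amenability of a topological group (the analogue of Theorem~\ref{thm:amenablegroup} proved just after Lemma~\ref{lemma:skew.cut.off}): $G$ is skew-amenable if and only if its trivial action on a singleton space $\{\ast\}$ is skew-amenable. Applying Theorem~\ref{theorem:skew.amenable.actions.for.skew.well.placed.subgroups} to $X = \{\ast\}$, we obtain that the trivial action $H \curvearrowright \{\ast\}$ is skew-amenable, and hence, by the same characterization applied now to $H$, the group $H$ is skew-amenable.

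There is no real obstacle here: all the analytic work is packaged inside Theorem~\ref{theorem:skew.amenable.actions.for.skew.well.placed.subgroups} (which in turn mirrors the proof of Theorem~\ref{theorem:amenable.actions.for.well.placed.subgroups}), and the only thing to verify is that the theorem is applied in the correct setting, together with the group-level characterization of skew-amenability via the singleton action. The proof can therefore be stated in one or two sentences.
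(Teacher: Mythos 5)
Your proposal is correct and is exactly the intended derivation: the paper states Corollary~\ref{corollary:skew.exactness} as an immediate consequence of Theorem~\ref{theorem:skew.amenable.actions.for.skew.well.placed.subgroups}, and your two-case argument (restricting a skew-amenable action on a non-empty compact space for skew-exactness, and using the singleton-action characterization of skew-amenability for the group-level statement) is precisely how that deduction goes. No gaps.
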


\begin{question} Is every closed topological subgroup of a locally compact group well-placed resp. skew-well-placed? \end{question}

\section{Exactness of Polish groups with metrizable UMF}
\label{sec:UMF}
In this section, we show that, if a Polish group $G$ has a metrizable universal minimal flow, then the action of $G$ on the latter is amenable. 

We start off by clarifying some notation and terminology. Given a uniform space $X$, we let $\widehat{X}$ denote the \emph{Hausdorff completion} of $X$ (see~\cite[II, \S3.7]{bourbaki} or~\cite{robertson} for details), and we recall that precompactness of $X$ is equivalent to compactness of $\widehat{X}$. Now, let $H$ be a subgroup of a topological group $G$. In the following, the set $G/H = \{ xH \mid x \in G\}$ will be endowed with the \emph{right uniformity} \begin{displaymath}
	\left\{ E \subseteq (G/H) \times (G/H) \left\vert \, \exists U \in \Neigh(G) \, \forall x,y \in G \colon \, xy^{-1}\! \in U \Longrightarrow \, (xH,yH) \in E \right\} \! , \right.\! 
\end{displaymath} which is the finest uniformity on $G/H$ such that \begin{displaymath}
	G \, \longrightarrow \, G/H, \quad x \, \longmapsto \, xH
\end{displaymath} is uniformly continuous with respect to the right uniformity on $G$. We say that the subgroup $H$ is \emph{co-precompact} in $G$ if the uniform space $G/H$ is precompact, which means precisely that \begin{displaymath}
	\forall U \in \Neigh(G) \ \exists C \in \Pfin(G) \colon \qquad G = UCH .
\end{displaymath} If $H$ is precompact in $G$, then the action of of $G$ by by left translations on $G/H$ naturally induces a continuous action of the topological group $G$ on the compact space $\widehat{G/H}$.

\begin{theorem}\label{theorem:main} Let $G$ be a topological group and let $H$ be an amenable topological subgroup of $G$ such that $G/H$ is precompact. Then $G \curvearrowright \widehat{G/H}$ is amenable. \end{theorem}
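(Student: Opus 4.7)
The plan is to combine amenability of $H$ with precompactness of $G/H$ to produce a witness of amenability for $G \curvearrowright \widehat{G/H}$ in the sense of Lemma~\ref{lemma:cut.off}. The witness will be built by gluing approximately $H$-invariant, finitely supported probability measures on $G$ via an $H$-invariant, right-uniformly continuous partition of unity on $G$ coming from precompactness of $G/H$.

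By Theorem~\ref{thm:amenablegroup} together with Lemma~\ref{lemma:cut.off} applied to the trivial $H$-action on a singleton, I extract a net $(\mu_j)_{j \in J}$ in $\Delta(H)$ with $p_B(h\mu_j - \mu_j) \to 0$ for every $h \in H$ and every $B \in \RUEB(H)$. For each symmetric $U \in \Neigh(G)$, precompactness of $G/H$ yields a finite $C = C_U \subseteq G$ with $G = UCH$. I fix a right-uniformly continuous $\phi \colon G \to [0,1]$ with $\phi > 0$ on $U$ and support in a slightly larger symmetric neighborhood, and I set $\hat\phi_c(x) := \sup_{h \in H} \phi(xhc^{-1})$. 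This is right-$H$-invariant in $x$ (reindex $h \to hh_0$) and right-uniformly continuous (its right-uniform modulus is inherited from that of $\phi$), with $\supp \hat\phi_c \subseteq U'cH$ and $\sum_{c \in C} \hat\phi_c(x) > 0$ for every $x \in G$ (from $G = UCH$). Normalizing gives an $H$-right-invariant, right-uniformly continuous partition of unity $(\psi_c)_{c \in C}$ on $G$.

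Each $\psi_c$ descends to a uniformly continuous $\bar\psi_c \colon G/H \to [0,1]$ extending to a continuous function $\bar\psi_c \colon \widehat{G/H} \to [0,1]$ with $\sum_c \bar\psi_c = 1$. I then define
\[
\bar\nu_{U,j} \colon \widehat{G/H} \longrightarrow \Delta(G), \quad \xi \longmapsto \sum_{c \in C_U} \bar\psi_c(\xi) \cdot (c\mu_j),
\]
noting that $\bigcup_\xi \supp(\bar\nu_{U,j}(\xi)) \subseteq C_U \cdot \supp(\mu_j)$ is finite. I order the net by pairs $(U,j)$, with $U$ shrinking in $\Neigh(G)$ and $j$ chosen sufficiently advanced in $J$ after $U$. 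To verify asymptotic equivariance, fix $g \in G$ and $B \in \RUEB(G)$ and expand
\[
g\bar\nu_{U,j}(\xi) - \bar\nu_{U,j}(g\xi) \, = \, \sum_{c \in C_U} \bigl(\bar\psi_c(\xi)(gc\mu_j) - \bar\psi_c(g\xi)(c\mu_j)\bigr).
\]
For each $c$ I decompose $gc = u_c c' h_c$ with $u_c \in U$, $c' = c'(g,c) \in C_U$ and $h_c \in H$, so that $gc\mu_j = u_c c'(h_c\mu_j)$. For fixed finite $C$ the family $\{(f \circ \lambda_{c'})|_H \colon f \in B,\, c' \in C\}$ lies in $\RUEB(H)$ (by continuity of conjugation $v \mapsto c'vc'^{-1}$ at $e$ and finiteness of $C$), so Step~1 gives $p_B(c'(h_c\mu_j - \mu_j)) \to 0$ uniformly over $c \in C_U$; meanwhile $u_c \in U$ together with $B \in \RUEB(G)$ gives $p_B(u_c c'\mu_j - c'\mu_j)$ as small as desired for $U$ shrunk. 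Thus $gc\mu_j$ is UEB-close to $c'\mu_j$ uniformly in $c$. After reindexing the first sum via $c \mapsto c'(g,c)$, the remaining discrepancy is $\sum_{\tilde c}(\alpha_{\tilde c}(\xi) - \bar\psi_{\tilde c}(g\xi))\,\tilde c\mu_j$, where $\alpha_{\tilde c}(\xi) := \sum_{c \colon c'(g,c) = \tilde c} \bar\psi_c(\xi)$; this is controlled by the uniform continuity of the partition of unity on $\widehat{G/H}$, which absorbs the $U$-shift relating the $c$-cover and the $g$-translated $c'$-cover.

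The main obstacle is the uniform bookkeeping in Step~4: as $U$ shrinks the set $C_U$ and the shifts $c \mapsto c'(g,c)$ grow and change, so one must ensure that the partition-of-unity matching $\alpha_{\tilde c}(\xi) \approx \bar\psi_{\tilde c}(g\xi)$ remains uniform in $\xi \in \widehat{G/H}$. This is made possible by the uniform construction of the $\hat\phi_c$'s as $c$-translates of a single right-uniformly continuous bump $\phi$, which furnishes a common modulus of continuity for all $\bar\psi_c$'s relative to the right-uniformity on $G/H$ descending to $\widehat{G/H}$, together with the finiteness of $C_U$ at each stage that allows the amenability of $H$ to produce UEB-convergence uniformly across the finitely many cosets in play.
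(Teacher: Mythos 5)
Your overall strategy (translated F\o lner measures $c\mu$ glued along a partition of unity on $G/H$ obtained from precompactness) is the same as the paper's, but the crucial estimate is not justified. After replacing $gc\mu_j$ by $c'(g,c)\mu_j$ you are left with $\sum_{\tilde c}\bigl(\alpha_{\tilde c}(\xi)-\bar\psi_{\tilde c}(g\xi)\bigr)\,\tilde c\mu_j$ and you claim $\alpha_{\tilde c}(\xi)\approx\bar\psi_{\tilde c}(g\xi)$ uniformly in $\xi$, citing uniform continuity of the partition of unity. That does not follow: distinct indices $\tilde c_1\neq\tilde c_2$ can have overlapping or nearby supports, and a partition of unity may split its mass among such indices in an uncontrolled way; a common modulus of continuity for each individual $\bar\psi_c$ says nothing about how the mass at $g\xi$ is distributed over the index set, and your matching $c\mapsto c'(g,c)$ only records a decomposition of the single point $gc$, not of the points $gx$ with $x$ in the support of $\psi_c$, so it does not track where that mass actually lands. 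In the simplest failure, $\bar\psi_c(\xi)=1$ while at $g\xi$ all the mass sits on some $\tilde c_2\neq c'(g,c)$; the error term is then $c'\mu_j-\tilde c_2\mu_j$, and making it small against $B$ requires a further comparison of translated measures through elements of $H$ that are not in the finite list you controlled. The paper's proof avoids index matching altogether: it inserts $1=\sum_{c'}\psi_{c'}(gxH)$, writes the discrepancy as $\sum_{c,c'}\psi_c(xH)\psi_{c'}(gxH)\bigl(\langle gc\mu,f\rangle-\langle c'\mu,f\rangle\bigr)$, and proves that positivity of the weight forces $gc\in Uc'H$ --- for which the cover is built at the conjugation-adjusted scale $W=\bigcap_{g\in E}g^{-1}Vg$ --- so that each such term is at most $\varepsilon$ via a finite set $H_0\subseteq H$ fixed in advance and a single F\o lner measure $\mu\in\Delta(H)$ for the UEB set $\{f\circ\lambda_c\mid f\in B,\ c\in C\}$.

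There is a second genuine problem in the bookkeeping of your net. A witness must be one net that works for every $g$ and every $B$, but with your index set ``$(U,j)$, $j$ sufficiently advanced after $U$'' the finite set of elements $h_c\in H$ and the family $\{(f\circ\lambda_{c'})\vert_H\}$ over which $\mu_j$ must be almost invariant depend on $g$ and grow as $U$ shrinks; hence for fixed $U$ there is no $j$-threshold valid for all $g$, and no single choice $j(U)$ serves all $g$ simultaneously. The paper resolves this by reducing to a statement quantified over triples $(\varepsilon,E,B)$ with $E\subseteq G$ finite, and choosing $U,V,W,W_0,C,H_0$ and then $\mu\in\Delta(H)$ only after such a triple is fixed; the witness net is indexed by these triples. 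A minor further point: to normalize your $\hat\phi_c$'s into a uniformly continuous partition of unity you need the denominator bounded away from $0$, so $\phi>0$ on $U$ is not enough (take $\phi\equiv 1$ on $U$); the paper sidesteps this by invoking Isbell's theorem on uniformly continuous partitions of unity subordinate to finite uniform covers of $G/H$.
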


\begin{proof} By a standard completion argument, it suffices to show that, for all $\varepsilon \in \R_{>0}$, $E \in \Pfin(G)$, and $B \in \RUEB(G)$, there exists a uniformly continuous map $\varphi \colon G/H \to \Delta(G)$ with relatively compact range such that \begin{displaymath}
	\forall g \in E \ \forall x \in G \ \forall f \in B \colon \qquad \lvert \langle g \varphi(xH),f\rangle - \langle \varphi(gxH), f \rangle \rvert \leq \varepsilon .
\end{displaymath} For this, let $\varepsilon \in \R_{>0}$, $E \in \Pfin(G)$, and $B \in \RUEB(G)$. Without loss of generality, we may and will assume that $e \in E$. As $B \in \RUEB(G)$, there is $U \in \Neigh(G)$ such that $U^{-1} = U$ and \begin{equation}\label{ueb}
	\forall f \in B \ \forall u \in U \colon \qquad \lVert f - (f \circ {\lambda_{u}}) \rVert_{\infty} \leq \tfrac{\varepsilon}{2} .
\end{equation} Pick any $V \in \Neigh(G)$ with $V^{-1} = V$ and $VV \subseteq U$. Clearly, \begin{displaymath}
    W \, \defeq \, \bigcap\nolimits_{g \in E} g^{-1}Vg \, \in \, \Neigh(G)
\end{displaymath} and $W^{-1} = W$. Fix any $W_{0} \in \Neigh(G)$ with $W_{0}^{-1} = W_{0}$ and $W_{0}W_{0} \subseteq W$. Since $G/H$ is precompact, there exists a finite subset $C \subseteq G$ such that $G = W_{0}CH$. As both $C$ and $E$ are finite, there exists a finite subset $H_{0} \subseteq H$ such that \begin{equation}\label{distance}
	\forall c,c' \in C \ \forall g \in E \colon \qquad \bigl(gc \in Uc'H \ \Longrightarrow \ \exists h \in H_{0}\colon \, gc \in Uc'h\bigr) .
\end{equation} Note that $\{ f \circ {\lambda_{c}} \mid f \in B, \, c \in C \} \in \RUEB(G)$. According to~\cite[Theorem~3.2]{MR3809992}, amenability of the topological group $H$ thus implies the existence of $\mu \in \Delta(H)$ such that \begin{equation}\label{folner}
	\forall f \in B \ \forall c \in C \ \forall h \in H_{0} \colon \qquad \lvert \langle ch\mu, f\rangle - \langle c\mu,f \rangle \rvert \leq \tfrac{\varepsilon}{2} .
\end{equation} We observe that $(\{ wcH \mid w \in W \})_{c \in C}$ constitutes a finite uniform cover of $G/H$: indeed, if $x \in G$, then there exists $c \in C$ with $x \in W_{0}cH$, whence \begin{displaymath}
	\{ wxH \mid w \in W_{0} \} \, \subseteq \, \{ wcH \mid w \in W \} .
\end{displaymath} Therefore, by~\cite[IV.11, Theorem, p.~62]{isbell}, there exists a uniformly continuous partition of unity \begin{displaymath}
	\psi_{c} \colon \, G/H \, \longrightarrow \, [0,1] \qquad (c \in C)
\end{displaymath} subordinate to $(\{ wcH \mid w \in W \})_{c \in C}$. Since $(\psi_{c})_{c \in C}$ is a uniformly continuous partition of unity, the map \begin{displaymath}
	\varphi \colon \, G/H \, \longrightarrow \, \Delta(G), \quad z \, \longmapsto \, \sum\nolimits_{c \in C} \psi_{c}(z)\cdot c\mu 
\end{displaymath} is well defined and uniformly continuous. Moreover, the range of $\varphi$ is contained in the compact subset \begin{displaymath}
	\Delta (C \spt(\mu)) \, \subseteq \, \Delta(G) .
\end{displaymath} It remains to show that \begin{equation}\label{claim}
	\forall g \in E \ \forall x \in G \ \forall f \in B \colon \qquad \lvert \langle g \varphi(xH),f \rangle - \langle \varphi(gxH),f \rangle \rvert \leq \varepsilon .
\end{equation} To this end, let $g \in E$. We first verify that \begin{equation}\label{step}
    \begin{split}
	&\forall x \in G \ \forall c,c' \in C \colon \\
    & \qquad \psi_{c}(xH)\psi_{c'}(gxH) > 0 \ \Longrightarrow \ \sup\nolimits_{f \in B} \lvert \langle gc\mu,f \rangle - \langle c'\mu,f \rangle \rvert \leq \varepsilon .
    \end{split}
\end{equation} Let $x \in G$ and $c,c' \in C$ with $\psi_{c}(xH)\psi_{c'}(gxH) > 0$. Then $x \in WcH$ and $gx \in Wc'H$, from which we infer that \begin{displaymath}
	c \, \in \, WxH \, = \, g^{-1}gWg^{-1}gxH \, \subseteq \, g^{-1}VgxH \, \subseteq \, g^{-1}VWc'H \, \subseteq \, g^{-1}Uc'H ,
\end{displaymath} that is, $gc \in Uc'H$. Due to~\eqref{distance}, thus there exists $h \in H_{0}$ with $gc \in Uc'h$. Consequently, for every $f \in B$, \begin{align*}
	&\lvert \langle gc\mu,f \rangle - \langle c'\mu,f \rangle \rvert \, \leq \, \lvert \langle gc\mu,f \rangle - \langle c'h\mu,f \rangle \rvert + \lvert \langle c'h\mu,f \rangle - \langle c'\mu,f \rangle \rvert \\
    &\qquad \leq \sum\nolimits_{x \in H} \mu(x)\lvert f(gcx) - f(c'hx) \rvert + \lvert \langle c'h\mu,f \rangle - \langle c'\mu,f \rangle \rvert \\
    &\qquad \stackrel{\eqref{ueb}+\eqref{folner}}{\leq} \, \tfrac{\varepsilon}{2} + \tfrac{\varepsilon}{2} \, = \, \varepsilon .
\end{align*} This proves~\eqref{step}. We conclude that, for every $x \in G$ and every $f \in B$, \begin{align*}
	&\lvert \langle g\varphi(xH),f \rangle - \langle \varphi(gxH),f \rangle \rvert \\
    & \qquad \qquad \leq \, \sum\nolimits_{c \in C} \sum\nolimits_{c' \in C} \psi_{c}(xH)\psi_{c'}(gxH) \lvert \langle gc\mu,f \rangle - \langle c'\mu,f \rangle \rvert \, \stackrel{\eqref{step}}{\leq} \, \varepsilon .
\end{align*} This proves~\eqref{claim} and hence completes the argument. \end{proof}

If $G$ is a topological group, then $\UMF(G)$ denotes its universal minimal flow.

\begin{corollary}\label{corollary:main} If $G$ is a Polish group and $\UMF(G)$ is metrizable, then $G \curvearrowright \UMF(G)$ is amenable. \end{corollary}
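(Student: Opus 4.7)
The plan is to reduce Corollary~\ref{corollary:main} to Theorem~\ref{theorem:main}. What I need is to exhibit $\UMF(G)$ as a completion $\widehat{G/H}$ for some closed subgroup $H\leq G$ that is simultaneously amenable and co-precompact in $G$; once this is in place, Theorem~\ref{theorem:main} immediately produces amenability of $G\curvearrowright \UMF(G)$.

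The ingredient that makes this work is the structure theorem for metrizable universal minimal flows of Polish groups due to Melleray--Nguyen Van Thé--Tsankov (building on Kechris--Pestov--Todor\v{c}evi\'{c} and the comeager-orbit theorem of Ben Yaacov--Melleray--Tsankov). It asserts that if $G$ is Polish and $\UMF(G)$ is metrizable, then there exists a closed, \emph{extremely amenable} subgroup $H\leq G$ that is co-precompact in $G$ such that the $G$-flow $\widehat{G/H}$ (with $G/H$ equipped with the right uniformity) is isomorphic to $\UMF(G)$. In particular, $H$ is amenable (extreme amenability trivially implies amenability via Theorem~\ref{thm:amenablegroup}, applied to the singleton action) and $G/H$ is precompact, which is exactly the hypothesis of Theorem~\ref{theorem:main}.

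With these two inputs in hand, the argument is then a one-line application: by Theorem~\ref{theorem:main}, the action $G\curvearrowright \widehat{G/H}$ is amenable, and transporting this along the $G$-flow isomorphism $\widehat{G/H}\cong \UMF(G)$ yields amenability of $G\curvearrowright \UMF(G)$.

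The only substantive step here is the citation of the structure theorem; the rest is a direct invocation of Theorem~\ref{theorem:main}. Thus the proof is essentially a dictionary between the metrizability assumption on $\UMF(G)$ and the precompactness hypothesis in Theorem~\ref{theorem:main}, with extreme amenability of the point stabilizer providing the amenability of $H$ for free.
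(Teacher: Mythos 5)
Your proposal is correct and is exactly the paper's argument: the paper's proof consists of citing the Ben Yaacov--Melleray--Tsankov structure theorem (which provides a closed, extremely amenable, co-precompact subgroup $H \leq G$ with $\UMF(G) \cong \widehat{G/H}$) and then applying Theorem~\ref{theorem:main}. Your additional remark that extreme amenability of $H$ gives amenability is the only elaboration, and it matches the intended reading.
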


\begin{proof} This follows by~\cite[Theorem~1.1]{BMT} and Theorem~\ref{theorem:main}. \end{proof}

The study of Polish groups with metrizable UMF was initiated in~\cite{GlasnerWeiss02,KPT}. Examples of non-amenable Polish groups with metrizable UMF include \begin{itemize}
    \item[$\bullet$\,] the homeomorphism group of the Cantor space (for non-amenability see~\cite[p.~70]{KechrisSokic}, for a description of the UMF see~\cite{GlasnerWeiss03}), and
    \item[$\bullet$\,] the automorphism group of the random distributive lattice (see~\cite[Theorems~0.1 and~0.2]{KechrisSokic}).
\end{itemize} While containment of a co-precompact amenable topological subgroup does not imply amenability of the ambient topological group, it does imply exactness of the latter according to Theorem~\ref{theorem:main}. In particular, Polish groups with metrizable UMF are exact by Corollary~\ref{corollary:main}.

\section{Products and inverse limits}

\subsection{Products of actions}

The aim of this subsection is to prove the following proposition.

\begin{proposition}\label{prop: exacproduct}
    Let $G_1 \curvearrowright X_1$ and $G_2 \curvearrowright X_2$ be actions of topological groups on compact spaces. If both actions are amenable, then the product action $G_1 \times G_2 \curvearrowright X_1 \times X_2$ is amenable. 
\end{proposition}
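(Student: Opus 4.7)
The plan is to build a witness of amenability for the product action by taking the tensor product of witnesses for the factor actions. Concretely, using Lemma~\ref{lemma:cut.off}, pick witnesses $\mu^{(1)}_{\iota}\colon X_{1}\to \Delta(G_{1})$ $(\iota\in I)$ and $\mu^{(2)}_{\kappa}\colon X_{2}\to \Delta(G_{2})$ $(\kappa\in K)$ of amenability with uniformly finite combined support, and set
\[
\nu_{(\iota,\kappa)}\colon X_{1}\times X_{2}\longrightarrow\Delta(G_{1}\times G_{2}), \quad (x_{1},x_{2})\longmapsto \mu^{(1)}_{\iota}(x_{1})\otimes \mu^{(2)}_{\kappa}(x_{2}),
\]
where $(\mu\otimes \nu)(g_{1},g_{2})\defeq \mu(g_{1})\nu(g_{2})$. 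Continuity of $\nu_{(\iota,\kappa)}$ and the uniform finiteness of $\bigcup_{(x_{1},x_{2})}\spt \nu_{(\iota,\kappa)}(x_{1},x_{2})$ are immediate from the corresponding properties of $\mu^{(1)}_{\iota}$ and $\mu^{(2)}_{\kappa}$. The claim will then be that the net $(\nu_{(\iota,\kappa)})_{(\iota,\kappa)\in I\times K}$, directed by the product order, is a witness of amenability for $G_{1}\times G_{2}\curvearrowright X_{1}\times X_{2}$.

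The decisive auxiliary observation I would establish first is a \emph{slice property} for the UEB bornology on $G_{1}\times G_{2}$: for every $B\in \RUEB(G_{1}\times G_{2})$, the sets
\[
B_{1}\defeq \{f(\,\cdot\,,y)\mid f\in B,\, y\in G_{2}\},\qquad B_{2}\defeq \{f(x,\,\cdot\,)\mid f\in B,\, x\in G_{1}\}
\]
belong to $\RUEB(G_{1})$ and $\RUEB(G_{2})$ respectively. This follows because a basic entourage of the right uniformity on $G_{1}\times G_{2}$ has the form $U_{1}\times U_{2}$ with $U_{i}\in \Neigh(G_{i})$, so that plugging $u=(u_{1},e)$ into the defining RUEB condition of $B$ yields the RUEB condition for $B_{1}$, and similarly for $B_{2}$.

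With this in hand, the verification of the amenability condition for $(\nu_{(\iota,\kappa)})$ is a standard telescoping. Given $(g_{1},g_{2})\in G_{1}\times G_{2}$, $f\in B\in \RUEB(G_{1}\times G_{2})$ and $(x_{1},x_{2})\in X_{1}\times X_{2}$, write
\begin{align*}
&((g_{1},g_{2})\nu_{(\iota,\kappa)}(x_{1},x_{2}))(f)-\nu_{(\iota,\kappa)}(g_{1}x_{1},g_{2}x_{2})(f)\\
&\quad=\bigl((g_{1}\mu^{(1)}_{\iota}(x_{1})-\mu^{(1)}_{\iota}(g_{1}x_{1}))\otimes (g_{2}\mu^{(2)}_{\kappa}(x_{2}))\bigr)(f)\\
&\quad\phantom{=}\,+\bigl(\mu^{(1)}_{\iota}(g_{1}x_{1})\otimes (g_{2}\mu^{(2)}_{\kappa}(x_{2})-\mu^{(2)}_{\kappa}(g_{2}x_{2}))\bigr)(f).
\end{align*}
Integrating out the second factor in the first line and using that $g_{2}\mu^{(2)}_{\kappa}(x_{2})$ is a probability measure bounds the first summand by $\sup_{f_{1}\in B_{1}}|\langle g_{1}\mu^{(1)}_{\iota}(x_{1})-\mu^{(1)}_{\iota}(g_{1}x_{1}),f_{1}\rangle|$, which tends to $0$ uniformly in $(x_{1},x_{2})$ as $\iota\to I$ because $B_{1}\in \RUEB(G_{1})$ and $\mu^{(1)}_{\iota}$ is a witness of amenability. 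An analogous bound holds for the second summand, uniformly in $(x_{1},x_{2})$ as $\kappa\to K$, using $B_{2}\in \RUEB(G_{2})$. Taking the product-directed limit concludes the proof.

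The mildly delicate point — hence the step I would be most careful with — is the slice property, since the definition of $\RUEB$ is phrased via a single entourage rather than a product of entourages. Once one records that $\Neigh(G_{1}\times G_{2})$ has a base consisting of products $U_{1}\times U_{2}$ and that specialising the perturbation to $(u_{1},e)$ (respectively $(e,u_{2})$) yields the one-sided RUEB condition, the rest is routine.
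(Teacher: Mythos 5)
Your proposal is correct and follows essentially the same route as the paper: the same slice lemma showing that $B_1$ and $B_2$ are RUEB in the factors, the same tensor-product witness built from Lemma~\ref{lemma:cut.off}, and the same two-term telescoping estimate (the paper merely indexes both witnesses by a common directed set rather than the product $I\times K$, an immaterial difference). No gaps.
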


There is a somewhat subtle notion of tensor product of uniform measures, see \cite[$\S$9]{PachlBook} for more details. In our setting, Lemma \ref{lemma:cut.off} allows us to work with witnesses of amenability that lie in $\Delta(G_i)$. For those measure all the more delicate properties of the tensor product such as Fubini's theorem are obvious. The key observation in order to prove Proposition \ref{prop: exacproduct} is the following lemma:

\begin{lemma}\label{lem tensor map is continuous}
Let $G_1,G_2$ be topological groups. 
For $B\in \RUEB(G_1\times G_2)$, we have 
$B_1:=\{f(\cdot,g_2)\mid f\in B,\,g_2\in G_2\} \in \RUEB(G_1)$ and, similarly,
     $B_2:=\{f(g_1,\cdot)\mid f\in B,\,g_1\in G_2\} \in \RUEB(G_2)$.
\end{lemma}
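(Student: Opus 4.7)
The plan is to unpack both the definition of $\RUEB$ and the product structure of the right uniformity on $G_1\times G_2$. Recall that a fundamental system of neighborhoods of the identity in $G_1\times G_2$ is given by products $U_1\times U_2$ with $U_i\in\Neigh(G_i)$, and that $(u_1,u_2)\in G_1\times G_2$ acts on $(g_1,g_2)$ via $\lambda_{(u_1,u_2)}(g_1,g_2)=(u_1g_1,u_2g_2)$. The goal is to show that the ``slice family'' $B_1$ inherits both a uniform norm bound and uniform equicontinuity from $B$.

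The boundedness part is immediate: for every $f\in B$ and every $g_2\in G_2$, $\|f(\cdot,g_2)\|_\infty\le\|f\|_\infty\le\sup_{h\in B}\|h\|_\infty<\infty$, so $B_1$ is norm-bounded by the same constant as $B$. For the uniform equicontinuity, let $\varepsilon>0$. Using that $B\in\RUEB(G_1\times G_2)$, I would fix a product neighborhood $U_1\times U_2\in\Neigh(G_1\times G_2)$ such that
\[
\forall (u_1,u_2)\in U_1\times U_2\ \forall f\in B\colon\qquad \|f-f\circ\lambda_{(u_1,u_2)}\|_\infty\le\varepsilon.
\]
The key observation is that this conclusion in particular holds for $u_2=e\in U_2$: for every $u_1\in U_1$ and $f\in B$,
\[
\|f-f\circ\lambda_{(u_1,e)}\|_\infty\le\varepsilon.
\]

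Now I would rewrite the left translate of a slice in terms of a slice of a left translate. Concretely, for any $g_2\in G_2$ and $u_1\in U_1$,
\[
\bigl(f(\cdot,g_2)\circ\lambda_{u_1}\bigr)(g_1)=f(u_1g_1,g_2)=\bigl(f\circ\lambda_{(u_1,e)}\bigr)(g_1,g_2),
\]
so that
\[
\sup_{g_1\in G_1}\bigl|f(g_1,g_2)-f(u_1g_1,g_2)\bigr|\le\|f-f\circ\lambda_{(u_1,e)}\|_\infty\le\varepsilon.
\]
Taking the supremum over $f\in B$ and $g_2\in G_2$ establishes the required uniform equicontinuity of $B_1$ along the neighborhood $U_1$. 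A completely symmetric argument (using $u_1=e\in U_1$ and varying $u_2\in U_2$) yields $B_2\in\RUEB(G_2)$. There is no serious obstacle here; the only point to get right is the translation from the two-variable uniform equicontinuity of $B$ to a one-variable statement by plugging in $e$ for the inert coordinate, which is permitted because every neighborhood of the identity contains the identity.
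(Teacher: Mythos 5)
Your proof is correct and follows essentially the same route as the paper's: both fix $\varepsilon$, use a product neighborhood $U_1\times U_2$ of the identity witnessing uniform equicontinuity of $B$ on $G_1\times G_2$, and then freeze the inert coordinate (your $u_2=e$ corresponds to the paper taking $g_2=h_2$), with the norm bound on slices being immediate. Nothing further is needed.
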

\begin{proof}
Let $\varepsilon>0$. Then there exist $U_i\in \Neigh(G_i)\,(i=1,2)$ such that for all $f\in B$, $g_1,h_1\in G_1$  and $g_2,h_2\in G_2$, the following implication hold: \[g_1h_1^{-1}\in U_1,\,\,g_2h_2^{-1}\in U_2\implies |f(g_1,g_2)-f(h_1,h_2)|<\varepsilon.\]
In particular, if $g_1,h_1\in G_1$ satisfy $g_1h_1^{-1}\in U_1$, then for every $g_2\in G_2$, $|f(g_1,g_2)-f(h_1,g_2)|<\varepsilon$. This shows that $B_i$ belongs to  $\RUEB(G_i)$ for $i \in \{1,2\}$.
\end{proof}
\begin{proof}[Proof of Proposition \ref{prop: exacproduct}]
Let $\mu_{i,\iota}\colon X_i\to \Delta(G_i)$ be a witness of amenability of the $G_i$ action on $X_i$ for $i=1,2$. Define for each $\iota\in I$ a map 
$\mu_{\iota}\colon X_1 \times X_2\to \Delta(G_1 \times G_2)$ by 
\[\mu_{\iota}(x_1,x_2)=\mu_{1,\iota}(x_1) {\otimes} \mu_{2,\iota}(x_2)\in \Delta(G_1 \times G_2).\]

From the properties of $\mu_{i,\iota} \colon X_i \to \Delta(G_i)$ guaranteed in Lemma \ref{lemma:cut.off}, it is easy to see that $\mu_{\iota} \colon X_1 \times X_2 \to \Delta(G_1 \times G_2)$ is continuous.
 
We show that $(\mu_{\iota})_{\iota\in I}$ is a witness for the amenability of the action $G_1 \times G_2 \curvearrowright X_1 \times X_2$. Let $B\in \RUEB(G)$ and $g^o=(g^o_1,g^o_2)\in G$ be given. 
Since $(\mu_{1,\iota})_{\iota\in I}$ is a witness for the amenability of the action $G_1\curvearrowright X_1$ and the set $B_1$ is in $\RUEB(G_1)$, it follows that the net of functions on $G_2$ given by 
    \[g_2 \mapsto \sup_{x_1\in X_1}\sup_{f\in B}| g_1^o \mu_{1,\iota}(x_1)(f(\cdot,g_2)) -  \mu_{1,\iota}(g_1^o x_1)(f(\cdot,g_2)) |\]
    converges to $0$ uniformly on $G_2$. Therefore, as uniform means have norm one, we can integrate with respect to $\mu_{2,\iota}(g_2^o x_2)$ and obtain that the net
    \eqa{
        \sup_{(x_1,x_2)\in X_1\times X_2}\sup_{f\in B}| (g^o_1,1)\mu_{\iota}(x_1,g^o_2x_2)(f) - \mu_{\iota}(g^o_1x_1,g^o_2x_2)(f)|
    }
    converges to $0$ as $\iota \to I$. By symmetry, we also obtain that
    \eqa{
        \sup_{(x_1,x_2)\in X_1\times X_2}\sup_{f\in B}| (1,g^o_2)\mu_{\iota}(x_1,x_2)(f) - \mu_{\iota}(x_1,g^o_2x_2)(f)|
    }
converges uniformly to $0$ as $\iota \to I.$ Multiplying the measures in this expression by $(g_1^o,1)$ and putting all the above together, we have shown that 
    \[\lim_{\iota \to I}\sup_{(x_1,x_2)\in X_1\times X_2}\sup_{f\in B}|g^o\mu_{\iota}(x)(f)-\mu_{\iota}(g^ox)(f)|=0\]
as required. Therefore, the action $G_1 \times G_2 \curvearrowright X_1 \times X_2$ is amenable.
\end{proof} 
\subsection{Inverse limits}
The aim of this section is to show that an inverse limit of exact groups is again exact. As a corollary we obtain Kirchberg's theorem stating that a locally compact group $G$ is exact if $G/G^{\circ}$ is compact, where $G^{\circ}$ denotes the connected component of the neutral element in $G$.

Let $(I,\leq)$ be a poset, let $(G_i)_{i \in I}$ be a family of topological groups and $f_{ij} \colon G_j \to G_i$ be continuous homomorphisms for $i \leq j$, such that $f_{ik} = f_{ij} \circ f_{jk}$  whenever $i \leq j \leq k$. In this situation, we can define the inverse limit
$$G:= \lim_{(I,\leq)} G_i := \left\{ (a_i)_i \in \prod_{i \in I} G_i \mid a_i = f_{ij}(a_j) \ \forall i \leq j \right\}.$$
By definition $G$ is a closed subgroup the infinite product $\prod_{i \in I} G_i$ and inherits a natural group topology as such. We will assume throughout this section that the natural projections $f_i \colon G \to G_i$ have dense image. This is a subtle assumption, indeed even if all $f_{ij}$ are surjective, the inverse limit can be trivial in general.

\begin{lemma} \label{lem:martin}
Consider the inverse limit $G = \lim_{i \in I} G_{i}$ of an inverse system of topological groups $(G_{i})_{i \in I}$. For each $i \in I$, let $\pi_{i} \colon G \to G_{i}$ denote the corresponding projection. Then, for every $B \in \RUEB(G)$ and every $\varepsilon \in \R_{>0}$, there exist $i \in I$ and $B' \in \RUEB(G_{i})$ such that \begin{displaymath}
    \forall f \in B \ \exists f' \in B' \colon \quad \Vert f-(f' \circ \pi_{i}) \Vert_{\infty} \, \leq \, \varepsilon.
\end{displaymath} \end{lemma}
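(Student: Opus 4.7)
The plan is to identify an index $i \in I$ at which the $\varepsilon$-equicontinuity neighbourhood of $B$ is realised as the preimage of a neighbourhood in $G_i$, and then to descend each $f \in B$ to $G_i$ by a partition-of-unity construction on $G_i$, in the spirit of Proposition~\ref{proposition:cocompact lattices} and Theorem~\ref{theorem:main}.

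First, using $B \in \RUEB(G)$, I choose $U \in \Neigh(G)$ with $\|f - f \circ \lambda_u\|_\infty \leq \varepsilon$ for every $u \in U$ and every $f \in B$. Since $G$ inherits the subspace topology from $\prod_{j \in I} G_j$, the neighbourhood $U$ contains a finite intersection $\bigcap_{j \in J}\pi_j^{-1}(W_j)$ with $J \subseteq I$ finite and $W_j \in \Neigh(G_j)$. Directedness of $(I,\leq)$ allows me to pick $i \in I$ with $i \geq j$ for all $j \in J$ and to set $V_0 \defeq \bigcap_{j \in J} f_{ji}^{-1}(W_j) \in \Neigh(G_i)$, whence $\pi_i^{-1}(V_0) \subseteq U$. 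I then select a symmetric $V \in \Neigh(G_i)$ with $V V \subseteq V_0$.

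Next, I would apply Isbell's theorem (as in the proof of Theorem~\ref{theorem:main}) to the uniform open cover $\{V y : y \in G_i\}$ of $G_i$ to obtain a uniformly continuous partition of unity $\{\psi_\alpha\}_{\alpha \in \mathcal{A}}$ with $\spt(\psi_\alpha) \subseteq V y_\alpha$ for suitable $y_\alpha \in G_i$. Density of $\pi_i(G)$ in $G_i$ then permits picking $x_\alpha \in G$ with $\pi_i(x_\alpha) \in V y_\alpha$ for each $\alpha$. For each $f \in B$ I define $f'(y) \defeq \sum\nolimits_{\alpha}\psi_\alpha(y) f(x_\alpha)$ and set $B' \defeq \{f' : f \in B\}$. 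The approximation property is then immediate: whenever $\psi_\alpha(\pi_i(x)) > 0$, both $\pi_i(x)$ and $\pi_i(x_\alpha)$ lie in $V y_\alpha$, so $\pi_i(x x_\alpha^{-1}) \in V V \subseteq V_0$, giving $x x_\alpha^{-1} \in U$ and $|f(x) - f(x_\alpha)| \leq \varepsilon$; summing against the partition of unity yields $\|f - f' \circ \pi_i\|_\infty \leq \varepsilon$.

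The main obstacle --- and the step I would execute most carefully --- is verifying that $B' \in \RUEB(G_i)$. Sup-norm boundedness by $\sup_{f \in B}\|f\|_\infty$ is clear from $\sum_{\alpha}\psi_\alpha = 1$, but the uniform equicontinuity of $B'$ hinges on the inequality
\[
|f'(wy) - f'(y)| \, \leq \, \bigl(\sup\nolimits_{f \in B}\|f\|_\infty\bigr)\sum\nolimits_{\alpha} |\psi_\alpha(wy) - \psi_\alpha(y)|
\]
being uniformly small in $y$ as $w$ ranges over a sufficiently small neighbourhood of the identity in $G_i$. This calls for the partition $\{\psi_\alpha\}_\alpha$ to be \emph{uniformly} locally finite and to enjoy a uniform modulus of uniform continuity across $\alpha$, a strengthening of the raw output of Isbell's theorem. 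On the topological group $G_i$ such a partition can be built by hand by selecting, via Zorn's lemma, a maximal $V$-separated subset of $G_i$ and taking translates of a uniformly continuous bump function (available by Urysohn's lemma for uniform spaces), in the spirit of the construction in Proposition~\ref{proposition:cocompact lattices} but without the cocompactness crutch that there supplied uniform local finiteness for free.
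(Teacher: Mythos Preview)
Your reduction to an index $i$ with $\pi_i^{-1}(V_0)\subseteq U$ is fine, and the approximation estimate $\|f - f'\circ\pi_i\|_\infty\leq\varepsilon$ would indeed follow from your partition-of-unity construction. The genuine gap is exactly where you flag it: establishing $B'\in\RUEB(G_i)$. Your proposed repair --- a maximal $V$-separated set together with translates of a bump --- does not yield uniformly bounded multiplicity in a general topological group. If $S\subseteq G_i$ is maximal $V_1$-separated and $\theta$ is supported in $V$, then $\theta(\,\cdot\,s^{-1})$ contributes at $y$ only when $s\in Vy$, and two such contributors satisfy $s_1 s_2^{-1}\in VV^{-1}$; this set need not be contained in $V_1$, so nothing prevents infinitely many $s\in S\cap Vy$. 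The finiteness in Proposition~\ref{proposition:cocompact lattices} came from local compactness (finitely many lattice points in a compact set), and in Theorem~\ref{theorem:main} the partition was finite because $G/H$ is precompact; neither mechanism is available here. Even the alternative estimate $f'(wy)-f'(y)=\sum_{\alpha,\beta}\psi_\alpha(wy)\psi_\beta(y)[f(x_\alpha)-f(x_\beta)]$ only gives oscillation $\leq\varepsilon$ on a fixed neighbourhood of $e_i$, not an arbitrarily small modulus, so $B'$ would merely be ``$\varepsilon$-equicontinuous'' rather than genuinely UEB.

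The paper avoids this obstruction entirely by replacing the partition-of-unity descent with a metric one. It builds a right-invariant continuous pseudo-metric $d_i$ on $G_i$ whose $2r$-ball ($r=\sup_{f\in B}\|f\|_\infty$) sits inside $U_i$, and takes $B'=\Lip_1(G_i,d_i;[-r,r])$, which lies in $\RUEB(G_i)$ automatically since $d_i$ is right-invariant and continuous. The point is that each $f\in B$ then satisfies $|f(x)-f(y)|\leq d_i(\pi_i(x),\pi_i(y))+\varepsilon$ for all $x,y\in G$ (large distances are handled by the diameter bound $2r$, small ones by the equicontinuity entourage), after which a McShane-type extension lemma produces $f'\in\Lip_1(G_i,d_i)$ with $\|f-f'\circ\pi_i\|_\infty\leq\varepsilon$. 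This sidesteps any local-finiteness issue: the single pseudo-metric $d_i$ encodes the required uniform equicontinuity of $B'$ from the outset.
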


\begin{proof} As $B$ is right-uniformly equicontinuous, there exists an identity neighborhood $U$ in $G$ such that \begin{displaymath}
	\forall x, y \in G\colon \quad xy^{-1} \in U \, \Longrightarrow \ \lvert f(x) - f(y) \rvert < \varepsilon.
\end{displaymath} By definition of the topology on $G$, there exist $i \in I$ and an identity neighborhood $U_{i}$ in $G_{i}$ such that $\pi_{i}^{-1} (U_{i}) \subseteq U$. Since $B$ is norm-bounded, we see that $r \defeq \sup\nolimits_{f \in B} \Vert f \Vert_{\infty} < \infty$. Now, we find a right-invariant continuous pseudo-metric $d_{i}$ on~$G_{i}$ with $B_{d_{i}}(e_{i},2r) \subseteq U_{i}$: indeed, by Urysohn's lemma for uniform spaces (see, e.g.,~\cite[pp.~182--183]{MR884154}), there exists a right-uniformly continuous function $F_{i} \colon G_{i} \to [0,3r]$ with $F_{i}(e_{i}) = 0$ and $F_{i}^{-1}([0,2r]) \subseteq U_{i}$, wherefore \begin{displaymath}
	d_{i} \colon \, G_{i} \times G_{i} \, \longrightarrow \, \R_{\geq 0}, \quad (x,y) \, \longmapsto \, \sup\{ \vert F_{i}(xg) - F_{i}(yg) \vert \mid g \in G_{i} \}
\end{displaymath} is a continuous right-invariant pseudo-metric on $G_{i}$ such that $B_{d_{i}}(e_{i},2r) \subseteq U_{i}$. Note that $B' \defeq \Lip_{1}(G_{i},d_{i};[-r,r]) \in \RUEB(G_{i})$. We will show that \begin{displaymath}
	\forall f \in B \ \exists f' \in B' \colon \quad \Vert f-(f' \circ \pi_{i}) \Vert_{\infty} \, \leq \, \varepsilon.
\end{displaymath} To this end, let $f \in B$. If $x,y \in G$, then either $xy^{-1} \in U$ and so $\lvert f(x) - f(y) \rvert < \varepsilon$, or $xy^{-1} \not\in U \supseteq \pi_{i}^{-1} (U_{i}) \supseteq \pi_{i}^{-1} (B_{d_{i}}(e_{i},2r))$ and therefore \begin{align*}
    d_{i}(\pi_{i}(x),\pi_{i}(y)) \, = \, d_{i}\!\left(\pi_{i}\!\left(xy^{-1}\right)\!,e_{i}\right) \, \geq \, 2r \, \geq \, 2\lVert f\rVert_{\infty} \, \geq \, \vert f(x) - f(y) \vert .
\end{align*} Hence, for all $x,y \in G$, \begin{displaymath}
	\lvert f(x) - f(y) \rvert \, \leq \, \max\{ d_{i}(\pi_{i}(x),\pi_{i}(y)), \varepsilon \} \, \leq \, d_{i}(\pi_{i}(x),\pi_{i}(y)) + \varepsilon .
\end{displaymath} In turn, \cite[Lemma~5.2]{MR4152622} asserts that the existence of $f' \in \Lip_{1}(G_{i},d_{i}) \cap \ell^{\infty}(G_{i})$ such that $\lVert f - ({f'} \circ {\pi_{i}}) \rVert_{\infty} \leq \varepsilon$. It now follows that $f'' \defeq (f' \wedge r) \vee (-r) \in B'$ and $\lVert f - ({f''} \circ {\pi_{i}}) \rVert_{\infty} \leq \varepsilon$, as desired. \end{proof}

\begin{lemma} \label{lem:inverse}
If $G_i \curvearrowright X_i$ is an amenable action on a compact space for all $i \in I$, then the action $G \curvearrowright \prod_{i \in I} X_i$ is amenable. In particular, if $G_i$ is exact for all $i \in I$, then $G$ is exact.
\end{lemma}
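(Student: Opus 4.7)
The plan is to verify the criterion of Lemma~\ref{lemma:cut.off}. Given a finite $E \subseteq G$, a set $B \in \RUEB(G)$, and $\varepsilon \in \R_{>0}$, I must produce a continuous $\mu \colon \prod_{j \in I} X_j \to \Delta(G)$ with uniformly finite support that is $\varepsilon$-equivariant with respect to $E$ and $B$.

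First, Lemma~\ref{lem:martin} reduces the problem to a single index: it provides $i \in I$ and $B' \in \RUEB(G_i)$ such that each $f \in B$ lies uniformly within $\varepsilon$ of some $f' \circ \pi_i$ with $f' \in B'$. Amenability of $G_i \curvearrowright X_i$, invoked through Lemma~\ref{lemma:cut.off}, furnishes a continuous $\nu \colon X_i \to \Delta(G_i)$ with uniformly finite support $S \subseteq G_i$ that is $\varepsilon$-equivariant with respect to $\pi_i(E)$ and $B'$. I then lift $\nu$ to a map $\tilde\nu \colon X_i \to \Delta(G)$ using density of $\pi_i(G)$ in $G_i$: for each of the finitely many $s \in S$ I pick $\tilde s \in G$ with $\pi_i(\tilde s) s^{-1}$ in a suitably small identity neighborhood of $G_i$, and set
\[
\tilde\nu(x) \, \defeq \, \sum\nolimits_{s \in S} \nu(x)(s)\delta_{\tilde s} \, \in \, \Delta(G).
\]
Continuity of the coefficient functions $x \mapsto \nu(x)(s)$ follows by testing $\nu(x)$ against Urysohn-type separating elements of $\UCB((G_i)_{\Rsh})$, which exist because $S$ is finite and $(G_i)_{\Rsh}$ is a Hausdorff uniform space. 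The desired witness is then $\mu(x) \defeq \tilde\nu(p_i(x))$, where $p_i \colon \prod_{j \in I} X_j \to X_i$ denotes the factor projection.

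The main obstacle is to coordinate the lifts $\tilde s$ with all translations by $E$ simultaneously. To verify approximate equivariance, I expand
\[
g\mu(x)(f) - \mu(gx)(f) \, = \, \sum\nolimits_{s \in S} \bigl(\nu(p_i(x))(s) f(g\tilde s) - \nu(\pi_i(g) p_i(x))(s) f(\tilde s)\bigr)
\]
for $g \in E$ and $f \in B$, and replace $f$ by $f' \circ \pi_i$ at the cost of $\varepsilon$. What remains is to verify $f'(\pi_i(g)\pi_i(\tilde s)) \approx f'(\pi_i(g) s)$ and $f'(\pi_i(\tilde s)) \approx f'(s)$ uniformly in $f' \in B'$ and $g \in E$. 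Since $B'$ is right-uniformly equicontinuous, this reduces to placing $\pi_i(\tilde s) s^{-1}$ into $W \defeq \bigcap_{g \in E} \pi_i(g)^{-1} U \pi_i(g)$, where $U \in \Neigh(G_i)$ is an identity neighborhood controlling $B'$. Since $E$ is finite, $W$ is again an identity neighborhood, so density of $\pi_i(G)$ in $G_i$ yields lifts $\tilde s$ with the desired property. The remaining discrepancy is absorbed by the $(\pi_i(E), B')$-equivariance of $\nu$ via a triangle inequality. The ``in particular'' statement is immediate: taking each $X_i$ to be a compact space witnessing exactness of $G_i$, the first part produces an amenable action of $G$ on $\prod_i X_i$, whence $G$ is exact.
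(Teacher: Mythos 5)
Your proposal is correct and takes essentially the same route as the paper: project to a single factor $X_i$, transport a witness for $G_i \curvearrowright X_i$ into $\Delta(G)$ by lifting the finitely many support points along the dense-image projection (the paper's approximate section $s_{i,U}$), and handle an arbitrary $B \in \RUEB(G)$ via Lemma~\ref{lem:martin}. Your conjugation-adjusted neighborhood $W$ simply spells out the equivariance verification that the paper leaves as ``follows easily by Lemma~\ref{lem:martin}''.
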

\begin{proof} Clearly $X: = \prod_{i \in I} X_i$ is compact and $G \curvearrowright X$ is continuous. We need to provide a net of continuous maps 
$\mu_{\iota} \colon X \to \Meanu(G_{\Rsh})$ $(\iota \in I)$ such that, for every $g \in G$ and every $B \in \RUEB(G)$, \begin{displaymath}
    \sup\nolimits_{x \in X} \sup\nolimits_{f \in B} \vert (g\mu_{\iota}(x))(f) - \mu_{\iota}(gx)(f) \vert \, \longrightarrow \, 0 \quad (\iota \to I) .
\end{displaymath}
By Lemma \ref{lemma:cut.off}, for each $i \in I$, there exist nets of continuous maps 
$\mu_{\iota} \colon X_i \to \Delta(G_{i,\Rsh})$ $(\iota \in I_i)$ such that, for every $g \in G_i$ and every $B \in \RUEB(G_i)$, \begin{displaymath}
    \sup\nolimits_{x \in X_i} \sup\nolimits_{f \in B} \vert (g\mu_{\iota}(x))(f) - \mu_{\iota}(gx)(f) \vert \, \longrightarrow \, 0 \quad (\iota \to I_i),
\end{displaymath}
such that $\cup_{x \in X_i} {\rm spt}(\mu_{\iota}(x)) \subseteq G_i$ is finite for every $\iota \in I_i.$

Since the image $f_i \colon G \to G_i$ is dense we may further consider an approximate section $s_{i,U} \colon G_i \to G$, i.e.\ a map, such that $(f_i \circ s_{i,U})(g) g^{-1} \in U$, for all $g \in G$ and some fixed neighborhood $U$ of $1 \in G_i.$

We consider now the composition
\begin{center}
\begin{tikzpicture}
    \node (A) at (0,0) {$X$};
    \node (B) at (2,0) {$X_i$};
    \node (C) at (4,0) {$\Delta(G_{i,\Rsh})$};
    \node (D) at (7,0) {$\Delta(G_{\Rsh}).$};

    \draw[->] (A) -- (B) node[midway, above] {$\pi_i$};
    \draw[->] (B) -- (C) node[midway, above] {$\mu_{i,\iota}$};
    \draw[->] (C) -- (D) node[midway, above] {$\Delta(s_{i,U})$};

    \draw[->, bend right=20] (A) to node[midway, below] {$\mu_{i,\iota,U}$} (D);
\end{tikzpicture}
\end{center}
and claim that they provide a witness of amenability for the action $G \curvearrowright X.$ Indeed, this follows easily by Lemma \ref{lem:martin}.
\end{proof}

\begin{corollary}
Arbitrary products of amenable actions are amenable. In particular, an arbitrary product of exact groups is exact.
\end{corollary}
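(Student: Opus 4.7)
The plan is to realise an arbitrary product as an inverse limit of its finite partial subproducts and then combine Proposition~\ref{prop: exacproduct} with Lemma~\ref{lem:inverse}. Write $G \defeq \prod_{i \in I} G_i$ acting coordinatewise on $X \defeq \prod_{i \in I} X_i$. Let $\mathcal{F}(I)$ denote the poset of finite subsets of $I$ ordered by inclusion, and for each $F \in \mathcal{F}(I)$ set $H_F \defeq \prod_{i \in F} G_i$ and $Y_F \defeq \prod_{i \in F} X_i$, with the obvious coordinate projections as bonding maps. Then $G$ is canonically the inverse limit of the system $(H_F)_{F \in \mathcal{F}(I)}$, and every projection $G \to H_F$ is surjective (in particular, has dense image), so the hypothesis of Lemma~\ref{lem:inverse} is satisfied.

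By induction on $|F|$ using Proposition~\ref{prop: exacproduct}, each action $H_F \curvearrowright Y_F$ is amenable. Applying Lemma~\ref{lem:inverse} to the inverse system $(H_F)_F$ equipped with the amenable actions $H_F \curvearrowright Y_F$ then yields an amenable action $G \curvearrowright \prod_{F \in \mathcal{F}(I)} Y_F$.

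The target space $X$ embeds $G$-equivariantly into $\prod_F Y_F$ via the diagonal $x \mapsto (\pi_F(x))_F$, where $\pi_F \colon X \to Y_F$ denotes projection onto the $F$-coordinates; the image is the closed $G$-invariant subspace cut out by the compatibility relations $\pi_{F,F'}(y_{F'}) = y_F$ for $F \subseteq F'$. Restricting a witness $\mu_\iota \colon \prod_F Y_F \to \Meanu(G_{\Rsh})$ of amenability to this subspace yields a witness for $G \curvearrowright X$, since the relevant uniform suprema in Definition~\ref{def:amenableaction} can only shrink when taken over a $G$-invariant subset. The second assertion follows immediately: if each $G_i$ is exact, pick a non-empty compact $X_i$ with $G_i \curvearrowright X_i$ amenable (e.g., $X_i = \Samuel((G_i)_{\Rsh})$), and the above then shows that $G$ acts amenably on the non-empty compact space $\prod_i X_i$.

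The only mildly delicate ingredient is the observation that amenability descends to a closed invariant subspace, but this is immediate from the definition: the restricted witness maps remain continuous, and the pertinent uniform suprema are monotone in the domain. Everything else is a formal assembly of Proposition~\ref{prop: exacproduct} and Lemma~\ref{lem:inverse}.
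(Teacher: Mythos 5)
Your argument is correct and is essentially the paper's own proof, which likewise handles finite products by induction from Proposition~\ref{prop: exacproduct} and deduces the infinite case from Lemma~\ref{lem:inverse} by viewing $\prod_{i \in I} G_i$ as the inverse limit of its finite subproducts $H_F$ (whose projections are surjective, hence have dense image). The only difference is that you spell out the final step --- passing from the amenable action on $\prod_{F} Y_F$ provided by Lemma~\ref{lem:inverse} to the product space $\prod_{i} X_i$ by restricting the witness to its closed $G$-invariant diagonal copy --- a detail the paper leaves implicit, and your justification of it (continuity of the restricted maps and monotonicity of the suprema) is correct.
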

\begin{proof} The finite case follows by induction from Proposition \ref{prop: exacproduct}. The infinite case is then a consequence of Lemma \ref{lem:inverse}.
\end{proof}

\begin{corollary}[Kirchberg]\label{corollary:kirchberg} Let $G$ be a locally compact group such that $G/G^{\circ}$ is compact. Then, $G$ is exact.
\end{corollary}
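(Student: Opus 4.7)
The plan is to combine the Gleason--Yamabe structure theorem with Theorem~\ref{theorem:main} and Lemma~\ref{lem:inverse} (the inverse-limit stability result for exactness). Since $G/G^{\circ}$ is compact, $G$ is almost connected, so by Gleason--Yamabe there is a downward directed family $(K_{\alpha})_{\alpha \in A}$ of compact normal subgroups of $G$ with $\bigcap_{\alpha} K_{\alpha} = \{e\}$ such that each $G_{\alpha} \defeq G/K_{\alpha}$ is a Lie group. The canonical projections $G \to G_{\alpha}$ are surjective, hence have dense image, and the natural map $G \hookrightarrow \varprojlim_{\alpha} G_{\alpha}$ is a topological isomorphism. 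By Lemma~\ref{lem:inverse}, it therefore suffices to prove that each $G_{\alpha}$ is exact.

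Each $G_{\alpha}$ inherits almost connectedness from $G$, and since $G_{\alpha}$ is a Lie group this forces $G_{\alpha}/G_{\alpha}^{\circ}$ to be finite. The next step is to exhibit an amenable closed subgroup $P_{\alpha} \leq G_{\alpha}$ with compact homogeneous space $G_{\alpha}/P_{\alpha}$. Using the Levi--Malcev decomposition of $G_{\alpha}^{\circ}$, let $R_{\alpha}$ denote its solvable radical (a closed, normal, amenable subgroup) and let $Q_{\alpha}$ be a minimal parabolic subgroup of the semisimple Lie group $G_{\alpha}^{\circ}/R_{\alpha}$; by classical Lie theory, $Q_{\alpha}$ is amenable and $(G_{\alpha}^{\circ}/R_{\alpha})/Q_{\alpha}$ is compact. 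The preimage $P_{\alpha}$ of $Q_{\alpha}$ in $G_{\alpha}^{\circ}$ is thus an extension of two amenable groups, hence amenable, and $G_{\alpha}^{\circ}/P_{\alpha}$ is compact. Since $G_{\alpha}/G_{\alpha}^{\circ}$ is finite, it follows that $G_{\alpha}/P_{\alpha}$ is compact as well.

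With this $P_{\alpha}$ at hand, Theorem~\ref{theorem:main} applies with $H = P_{\alpha}$: the quotient $G_{\alpha}/P_{\alpha}$ is already compact so the Hausdorff completion is unnecessary, and the theorem yields amenability of the action $G_{\alpha} \curvearrowright G_{\alpha}/P_{\alpha}$. This witnesses exactness of $G_{\alpha}$. Lemma~\ref{lem:inverse} then delivers exactness of $G = \varprojlim_{\alpha} G_{\alpha}$, completing the proof.

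The main obstacle is neither the Gleason--Yamabe reduction nor the inverse-limit step, both of which are invoked as black boxes, but rather the construction of the amenable cocompact subgroup $P_{\alpha}$ in each almost connected Lie group. This ultimately relies on the classical Lie-theoretic facts (Levi--Malcev and Iwasawa/Langlands decompositions) that minimal parabolic subgroups of semisimple Lie groups are amenable with compact homogeneous quotient, and that amenability is preserved under group extensions; everything else is essentially bookkeeping around the tools already developed earlier in the paper.
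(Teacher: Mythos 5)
Your proposal follows exactly the paper's route: Gleason--Yamabe to realize $G$ as an inverse limit of virtually connected Lie groups, an amenable closed cocompact subgroup in each Lie quotient to get exactness via Theorem~\ref{theorem:main}, and then Lemma~\ref{lem:inverse} to pass to the inverse limit. The only difference is that you spell out the classical Lie-theoretic construction (radical plus minimal parabolic, with amenability preserved under extensions) that the paper simply asserts, and that part is correct.
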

\begin{proof}
By the Gleason-Yamabe theorem, any such $G$ is an inverse limit of virtually connected Lie groups. Since any virtually connected Lie group admits an amenable closed subgroup such that the quotient space is compact, we conclude that $G$ is an inverse limit of exact groups by Theorem~\ref{theorem:main}. Hence, $G$ is itself exact according to Lemma~\ref{lem:inverse}.
\end{proof}

Affirmative answers to the following two questions, the first of which has been suggested to the authors by the anonymous referee, would provide a shorter proof of Corollary~\ref{corollary:kirchberg}.

\begin{question}\label{question:extension} Let $N$ be a normal subgroup of a topological group~$G$. \begin{enumerate}
    \item[$(1)$] If $N$ is compact and $G/N$ is exact, is then $G$ exact, too?
    \item[$(2)$] If $N$ is exact and $G/N$ is compact, is then $G$ exact, too? 
\end{enumerate} \end{question}

\section{Further examples and applications}

\subsection{Exactness of unitary groups}

In this section, we study exactness of unitary groups of Hilbert spaces and show that they are non-exact if the dimension is infinite. Our analysis relies on Gromov's work providing non-exact discrete groups \cite{Gromov00,Gromov03} and we were unable to find a more direct proof of our non-exactness result. It would be desirable to find a more direct argument. For comparison, the reader is referred to~\cite{DeLaHarpe73,GiordanoPestov} for subtleties concerning the amenability of topological groups of unitaries.

Given a Hilbert space $H$ and a subgroup $G$ of its unitary group $\UU(H)$, we let $G_{\mathrm{sot}}$ denote the topological group obtained by endowing $G$ with the corresponding strong operator topology, and we let $G_{\mathrm{norm}}$ denote the topological group obtained by equipping $G$ with the norm topology.

\begin{theorem}\label{theorem:skew.exactness.for.unitary.groups} Let $\nu$ be a left Haar measure on a locally compact group~$G$. Consider $G$ as a topological subgroup of $\UU(H)_{\mathrm{sot}}$ for $H \defeq L^{2}(G,\nu)$, embedded via the left regular representation. Then $G$ is skew-well-placed in $\UU(H)_{\mathrm{sot}}$. \end{theorem}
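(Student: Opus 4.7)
The plan is to exhibit an explicit $G$-left-equivariant map $\Phi \colon \UU(H)_{\mathrm{sot}} \to \Meanu(G_{\Lsh})$ and verify directly that it is left-uniformly continuous. Fix once and for all a positive unit vector $\eta \in L^{2}(G,\nu)$ — for instance $\eta \defeq \nu(K)^{-1/2}\mathbf{1}_{K}$ for some compact $K \subseteq G$ with $\nu(K) > 0$. For each $u \in \UU(H)$ I consider the Borel probability measure $|u\eta|^{2}\,\mathrm{d}\nu$ on $G$ and use the natural inclusion $\Prob(G) \hookrightarrow \Meanu(G_{\Lsh})$ from Section~\ref{section:preliminaries} to view it as an element $\Phi(u) \in \Meanu(G_{\Lsh})$. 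Explicitly,
\[
    \Phi(u)(f) \, = \, \int_{G} f(x)\,|u\eta|^{2}(x)\,\mathrm{d}\nu(x) \qquad (f \in \LUCB(G)).
\]

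\textbf{Equivariance.} For $g \in G$, acting on $\UU(H)$ by left multiplication via $\lambda_{g}$, I compute
\[
    \Phi(\lambda_{g}u)(f) \, = \, \int f(x)\,|(u\eta)(g^{-1}x)|^{2}\,\mathrm{d}\nu(x) \, = \, \int (f \circ \lambda_{g})(y)\,|u\eta|^{2}(y)\,\mathrm{d}\nu(y)
\]
using left-invariance of $\nu$, which equals $\Phi(u)(f \circ \lambda_{g}) = (g\Phi(u))(f)$ by the formula for the $G$-action on $\Meanu(G_{\Lsh})$ recalled in Section~\ref{section:preliminaries}.

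\textbf{Left-uniform continuity.} Given $B \in \LUEB(G)$, set $C_{B} \defeq \sup\nolimits_{f \in B} \Vert f \Vert_{\infty}$, which is finite since $\LUEB$ sets are bounded. Using $|u\eta|^{2}-|v\eta|^{2} = (|u\eta|-|v\eta|)(|u\eta|+|v\eta|)$, Cauchy--Schwarz, the reverse triangle inequality, and unitarity,
\[
    p_{B}(\Phi(u)-\Phi(v)) \, \leq \, C_{B}\,\bigl\Vert |u\eta|^{2}-|v\eta|^{2}\bigr\Vert_{1} \, \leq \, 2C_{B}\,\Vert u\eta - v\eta \Vert_{2} \, = \, 2C_{B}\,\Vert \eta - u^{-1}v\eta \Vert_{2}
\]
for all $u,v \in \UU(H)$. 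So, given $\varepsilon > 0$, the sot-identity neighborhood $U \defeq \{ w \in \UU(H) : \Vert w\eta - \eta\Vert_{2} < \varepsilon/(2C_{B})\}$ guarantees $p_{B}(\Phi(u) - \Phi(v)) \leq \varepsilon$ whenever $u^{-1}v \in U$. This is precisely uniform continuity with respect to the left uniformity on $\UU(H)_{\mathrm{sot}}$ and the UEB uniformity on $\Meanu(G_{\Lsh})$.

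\textbf{Expected obstacle.} There is no significant analytical difficulty: the only conceptual point to highlight is that, because $\LUEB$ sets are automatically norm-bounded, the estimate collapses to a bound in terms of the $L^{1}$-distance of the densities, and hence — via the classical Powers--Størmer-type inequality — to the $L^{2}$-distance of $u\eta$ and $v\eta$. That distance is controlled by the single-vector seminorm $w \mapsto \Vert w\eta - \eta\Vert$, which by construction generates precisely the kind of left-sot neighborhoods needed. The only thing to be mindful of is choosing $\eta$ so that it lies in $L^{2}(G,\nu)$ and is positive, for which any normalized indicator of a compact set of positive Haar measure suffices.
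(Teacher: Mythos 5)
Your proposal is correct and is essentially the paper's own argument: your map $\Phi(u) = \Xi_{\nu}^{\Lsh}\bigl(\lvert u\eta\rvert^{2}\bigr)$ is exactly the paper's composition of the orbit map $u \mapsto u x_{0}$ into the unit sphere, the squaring map into $\Prob(G,\nu)$ (with the same $2$-Lipschitz estimate via $\bigl\lVert \lvert f\rvert^{2}-\lvert f'\rvert^{2}\bigr\rVert_{\nu,1} \leq 2\lVert f-f'\rVert_{\nu,2}$), and the map $\Xi_{\nu}^{\Lsh}$ of Remark~\ref{remark:haar.measure}, merely written out in one step. The equivariance and left-uniform continuity verifications match the paper's, so there is nothing to add.
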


\begin{proof} Consider the sphere $\mathbb{S} \defeq \{ x \in H \mid \Vert x \Vert = 1 \}$ endowed with the metric induced by the norm of $H$ and equipped with the natural isometric action by $\UU(H)$. Fix some $x_{0} \in \mathbb{S}$. The map \begin{displaymath}
    \varphi \colon \, \UU(H) \, \longrightarrow \, \mathbb{S}, \quad g \, \longmapsto \, gx_{0}
\end{displaymath} is $\UU(H)$-equivariant (with respect to the left action of $\UU(H)$ on itself) and left-uniformly continuous with respect to the strong operator topology on $\UU(H)$. Furthermore, the map \begin{displaymath}
    \psi \colon \, \mathbb{S} \, \longrightarrow \, \Prob(G,\nu), \quad f \, \longmapsto \, \vert f \vert^{2}
\end{displaymath} is clearly $G$-equivariant and, moreover, $2$-Lipschitz with respect to the metric induced by $\Vert \cdot \Vert_{\nu,1}$ on $\Prob(G,\nu)$: indeed, if $f,f' \in \mathbb{S}$, then \begin{align*}
	\Vert \psi(f) - \psi(f') \Vert_{\nu,1} \, & = \, \left\lVert \vert f \vert^{2} - \vert f' \vert^{2} \right\rVert_{\nu,1} \, = \, \int \left\lvert \vert f(x)\vert^{2} - \vert f'(x) \vert^{2} \right\rvert \, \mathrm{d}\nu(x) \\
	& = \, \int \left\lvert \vert f(x) \vert + \vert f'(x)\vert \right\rvert \cdot \left\lvert \vert f(x)\vert - \vert f'(x)\vert \right\rvert \, \mathrm{d}\nu(x) \\
	& \leq \, \int ( \vert f(x) \vert + \vert f'(x)\vert) \vert f(x) - f'(x) \vert \, \mathrm{d}\nu(x) \\
	& = \, \left\lvert \langle \vert f\vert + \vert f' \vert ,\vert f - f' \vert \rangle \right\rvert \\
    &\leq \, \left\lVert \vert f \vert + \vert f' \vert \right\rVert_{\nu,2}\cdot \left\lVert f - f' \right\rVert_{\nu,2} \\
    &\leq \, 2 \left\lVert f-f' \right\rVert_{\nu,2} .
\end{align*} Combining these observations with Remark~\ref{remark:haar.measure}, we conclude that the composition ${\Xi_{\nu}} \circ \psi \circ \varphi \colon \UU(H) \to \Meanu(G_{\Lsh})$ is a $G$-left-equivariant, uniformly continuous map from $(\UU(H)_{\mathrm{sot}})_{\Lsh}$ to $\Meanu(G_{\Lsh})$, as desired. \end{proof}

\begin{corollary}\label{corollary:skew.exactness.for.unitary.groups} Let $G$ be a group. Consider $G$ as a subgroup of the unitary group $\UU(H)$ of the Hilbert space $H \defeq \ell^{2}(G)$ via the left regular representation. Then $G$ is skew-well-placed in $\UU(H)_{\mathrm{sot}}$. In particular, $G$ is well-placed in both $\UU(H)_{\mathrm{norm}}$ and $\UU(LG)_{\mathrm{sot}}$. \end{corollary}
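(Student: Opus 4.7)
The plan is to deduce all three assertions from Theorem~\ref{theorem:skew.exactness.for.unitary.groups}. Viewing the abstract group $G$ as a discrete locally compact group with counting measure $\nu$ as a left Haar measure, one has $L^{2}(G,\nu) = \ell^{2}(G) = H$, and the embedding of $G$ into $\UU(H)_{\mathrm{sot}}$ is exactly the left regular representation. Theorem~\ref{theorem:skew.exactness.for.unitary.groups} then immediately gives the first claim, namely that $G$ is skew-well-placed in $\UU(H)_{\mathrm{sot}}$. Moreover, since $G$ itself is discrete, its left and right uniformities coincide (Remark~\ref{remark:discrete.uniformity}), so $\Meanu(G_{\Lsh}) = \Meanu(G_{\Rsh})$, and any (skew-)well-placedness map for $G$ targets the same space.

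The key observation behind the two ``in particular'' statements is that both ambient groups are SIN. For $\UU(H)_{\mathrm{norm}}$ this is clear from the bi-invariance of the operator norm under multiplication by unitaries. For $\UU(LG)_{\mathrm{sot}}$, one uses that $LG$ is a finite von Neumann algebra with canonical faithful normal tracial state $\tau(x) \defeq \langle x \delta_{e},\delta_{e}\rangle$; the $2$-norm $\lVert x \rVert_{2} \defeq \tau(x^{*}x)^{1/2}$ induces the SOT on $\UU(LG)$ by a standard computation, and bi-invariance on unitaries follows from the trace property $\tau(ab) = \tau(ba)$. In a SIN group the left and right uniformities agree, so for these two ambient groups well-placedness and skew-well-placedness of $G$ are equivalent.

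It then suffices to transport the skew-well-placedness map $\varphi \colon (\UU(H)_{\mathrm{sot}})_{\Lsh} \to \Meanu(G_{\Lsh})_{\UEB}$ provided by Theorem~\ref{theorem:skew.exactness.for.unitary.groups} to the two smaller ambient groups. For $\UU(H)_{\mathrm{norm}}$ one precomposes with the identity map $\UU(H)_{\mathrm{norm}} \to \UU(H)_{\mathrm{sot}}$, a continuous group homomorphism, which is automatically left-uniformly continuous. For $\UU(LG)_{\mathrm{sot}}$, which is a topological subgroup of $\UU(H)_{\mathrm{sot}}$ (since $LG \subseteq B(H)$ is SOT-closed), one simply restricts $\varphi$; the left uniformity on $\UU(LG)_{\mathrm{sot}}$ is the subspace uniformity, so both $G$-left-equivariance and left-uniform continuity pass to the restriction.

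I do not anticipate any genuine obstacle: the corollary amounts to packaging Theorem~\ref{theorem:skew.exactness.for.unitary.groups} with two standard facts about $\UU(H)_{\mathrm{norm}}$ and $\UU(LG)_{\mathrm{sot}}$ being SIN and with the observation that continuous homomorphisms are uniformly continuous with respect to the left uniformities. The only step demanding a little care is the verification that the $2$-norm metric on $\UU(LG)$ is bi-invariant and induces the SOT, both of which are immediate from the trace property and the identification $\lVert u\xi - v\xi\rVert = \lVert u-v\rVert_{2}$ for $\xi = \delta_{e}$.
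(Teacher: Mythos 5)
Your argument is correct and follows essentially the same route as the paper: apply Theorem~\ref{theorem:skew.exactness.for.unitary.groups} to $G$ viewed as a discrete locally compact group (so that $\Meanu(G_{\Lsh})=\Meanu(G_{\Rsh})=\Prob(G)$), then transfer the resulting map to $\UU(H)_{\mathrm{norm}}$ via the finer topology and to $\UU(LG)_{\mathrm{sot}}$ by restriction, concluding via the SIN property of both ambient groups. The only difference is that you spell out why $\UU(LG)_{\mathrm{sot}}$ is SIN (via the trace $2$-norm), which the paper simply asserts.
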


\begin{proof} Recall that $\Prob(G) = \Meanu(G_{\Lsh})$ by Remark~\ref{remark:discrete.uniformity}. Due to Theorem~\ref{theorem:skew.exactness.for.unitary.groups}, the subgroup $G$ is skew-well-placed in $\UU(H)_{\mathrm{sot}}$, i.e., there is a $G$-left-equivariant, uniformly continuous map \begin{displaymath}
    \varphi \colon \, (\UU(H)_{\mathrm{sot}})_{\Lsh} \, \longrightarrow \, \Prob(G) .
\end{displaymath} Since the norm topology is finer than the strong operator topology, \begin{displaymath}
    \varphi \colon \, (\UU(H)_{\mathrm{norm}})_{\Lsh} \, \longrightarrow \, \Prob(G)
\end{displaymath} is uniformly continuous, too. Moreover, as $\UU(H)_{\mathrm{norm}}$ is a SIN group, \begin{displaymath}
    \varphi \colon \, (\UU(H)_{\mathrm{norm}})_{\Rsh} \, \longrightarrow \, \Prob(G)
\end{displaymath} is uniformly continuous. Consequently, $G$ is well-placed in $\UU(H)_{\mathrm{norm}}$. Finally, since $\UU(LG)_{\mathrm{sot}}$ is a SIN group as well, the restriction \begin{displaymath}
    \varphi\vert_{\UU(LG)} \colon \, (\UU(LG)_{\mathrm{sot}})_{\Rsh} \, \longrightarrow \, \Prob(G)
\end{displaymath} is uniformly continuous, so that $G$ is well-placed in $\UU(LG)_{\mathrm{sot}}$, too. \end{proof}

\begin{corollary}
    Let $H$ be a Hilbert space. The following are equivalent. \begin{itemize}
        \item[$(1)$] $\UU(H)_{\mathrm{norm}}$ is exact.
        \item[$(2)$] $\UU(H)_{\mathrm{sot}}$ is skew-exact.
        \item[$(3)$] $\dim (H) < \infty$.
    \end{itemize}
\end{corollary}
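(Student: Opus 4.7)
The implications $(3) \Rightarrow (1)$ and $(3) \Rightarrow (2)$ are straightforward: when $\dim(H) < \infty$, the norm and strong operator topologies on $\UU(H)$ coincide and turn it into a compact Lie group, hence amenable. Theorem~\ref{thm:amenablegroup} (and its skew analogue) then yields both exactness and skew-exactness via the trivial action on a singleton.

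For the converse, I assume $\dim(H) \geq \aleph_{0}$. The plan is to embed a non-exact discrete group in a well-placed (resp.\ skew-well-placed) manner and invoke the inheritance results of Section~\ref{sec:emb}. Using Gromov's construction~\cite{Gromov00,Gromov03}, I fix a countable discrete group $G$ that is not exact. Since $H$ is infinite-dimensional, it contains a closed subspace isometric to $\ell^{2}(G)$, and I write $H = \ell^{2}(G) \oplus H'$ and embed $G \hookrightarrow \UU(H)$ via $g \mapsto \lambda_{g} \oplus \id_{H'}$.

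In the separable case $H = \ell^{2}(G)$ (so $H' = \{0\}$), Corollary~\ref{corollary:skew.exactness.for.unitary.groups} directly gives that $G$ is well-placed in $\UU(H)_{\mathrm{norm}}$ and skew-well-placed in $\UU(H)_{\mathrm{sot}}$. If $\UU(H)_{\mathrm{norm}}$ were exact, then by Corollary~\ref{corollary:well-placed.subgroups.inherit} the group $G$ would be exact, contradicting the choice of $G$. Similarly, if $\UU(H)_{\mathrm{sot}}$ were skew-exact, then by Corollary~\ref{corollary:skew.exactness} the group $G$ would be skew-exact; but since $G$ is discrete, its left and right uniformities coincide, so skew-exactness reduces to exactness and the same contradiction ensues.

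The main obstacle is extending the argument to non-separable $H$. The naive extension of the construction underlying Corollary~\ref{corollary:skew.exactness.for.unitary.groups} fails, since a unitary $u \in \UU(H)$ with $u\delta_{e} \in H'$ has trivial $G$-orbit in $H'$, forcing any equivariant probability measure on $G$ to be $G$-invariant---impossible for non-amenable $G$. I expect the right workaround to show that the closed SIN subgroup $\UU(\ell^{2}(G))_{\mathrm{norm}} \oplus \id_{H'} \subseteq \UU(H)_{\mathrm{norm}}$ is well-placed (and its sot counterpart skew-well-placed) in the ambient unitary group, so that composing with the map from the separable case yields a $G$-equivariant uniformly continuous map $\UU(H) \to \Meanu(G_{\Rsh})$ extending the inclusion of $G$, thereby producing the same contradiction in full generality.
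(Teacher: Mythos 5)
Your forward direction and your separable case are fine and match the paper's strategy, but the non-separable case is a genuine gap: you explicitly leave it as something you ``expect'' to work, and the proposed workaround is not established. Showing that $\UU(\ell^{2}(G))\oplus \id_{H'}$ is well-placed in $\UU(H)_{\mathrm{norm}}$ (resp.\ skew-well-placed in $\UU(H)_{\mathrm{sot}}$) is itself a nontrivial claim of exactly the kind the paper treats with care (compare the open question on well-placedness of closed subgroups), and even granted it, you would still need a transitivity statement for (skew-)well-placedness --- i.e.\ that composing an $H$-equivariant uniformly continuous map $G_{\Rsh}\to\Meanu(K_{\Rsh})$ with a $K$-level datum yields a map into $\Meanu(H_{\Rsh})$, which requires pushing forward uniform measures and a barycenter-type argument that you do not supply. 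So as written, the implication $(1)\vee(2)\Rightarrow(3)$ is only proved for separable $H$.

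The paper sidesteps this difficulty with a cardinality trick rather than a direct-sum decomposition: write $H\cong\ell^{2}(X)$, take Gromov's non-exact countable group $\Gamma$, pick any group $\Gamma'$ with $\lvert\Gamma'\rvert=\lvert X\rvert$, and set $G\defeq\Gamma\times\Gamma'$. Then $G$ is non-exact (exactness of discrete groups passes to subgroups, since every subgroup of a discrete group is well-placed by Proposition~\ref{proposition:open.subgroups} and Corollary~\ref{corollary:well-placed.subgroups.inherit}), and $\lvert G\rvert=\lvert X\rvert$, so $H\cong\ell^{2}(G)$ on the nose. Since Theorem~\ref{theorem:skew.exactness.for.unitary.groups} and Corollary~\ref{corollary:skew.exactness.for.unitary.groups} impose no countability or separability hypothesis on $G$, your separable-case argument then applies verbatim to the full, possibly non-separable, $H$: well-placedness of $G$ in $\UU(H)_{\mathrm{norm}}$ rules out exactness, and skew-well-placedness in $\UU(H)_{\mathrm{sot}}$ rules out skew-exactness (using, as you note, that skew-exactness equals exactness for the discrete group $G$). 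Adopting this choice of $G$ closes your gap without any new well-placedness or composition lemmas.
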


\begin{proof} (1)$\vee$(2)$\Longrightarrow$(3). Suppose that $\dim (H) = \infty$. Let $X$ be a set such that $H \cong \ell^{2}(X)$. By Gromov's work \cite{Gromov00,Gromov03}, there is a non-exact countable group $\Gamma$. Choose any group $\Gamma'$ such that $\vert \Gamma' \vert = \vert X \vert$. Then the group $G \defeq \Gamma \times \Gamma'$ is non-exact, and $\vert G \vert = \vert X \vert$, thus $H \cong \ell^{2}(G)$. By Corollary~\ref{corollary:skew.exactness.for.unitary.groups} and Theorem~\ref{theorem:amenable.actions.for.well.placed.subgroups} (or Theorem~\ref{theorem:skew.amenable.actions.for.skew.well.placed.subgroups}), it follows that $\UU(H)_{\mathrm{norm}}$ is not exact. Thanks to Corollary~\ref{corollary:skew.exactness.for.unitary.groups} and Corollary~\ref{corollary:skew.exactness}, we see that $\UU(H)_{\mathrm{sot}}$ is not skew-exact.

(3)$\Longrightarrow$(1)$\wedge$(2). If $\dim (H) < \infty$, then $\UU(H)_{\mathrm{norm}} = \UU(H)_{\mathrm{sot}}$ is compact, thus exact for trivial reasons. \end{proof}

\begin{remark}
The same arguments as above apply to any closed subgroup of $\UU(\ell^2(G))_{\mathrm{sot}}$ containing $\lambda(G)$ for some non-exact discrete group $G$.

In particular, by Corollary~\ref{corollary:skew.exactness.for.unitary.groups} and Corollary~\ref{corollary:well-placed.subgroups.inherit}, the unitary group of the group von Neumann algebra $LG$ is non-exact for any non-exact discrete group~$G$. We do not know if the converse holds, i.e., if $\UU(LG)_{\mathrm{sot}}$ is exact for exact $G$. In particular, we were unable to decide if $\UU(LF_2)_{\mathrm{sot}}$ is exact, where $F_2$ is the free group on two generators. There is a similar situation for unitary groups of reduced C$^*$-algebras of discrete groups.
\end{remark}

\subsection{Well-placedness in $L^{0}$ groups}

We exhibit a natural family of well-placed and skew-well-placed embeddings of Polish groups.

Consider a probability space $(X,\mathcal{B},\mu)$ and a separable metrizable topological space $Y$. We let $L^{0}(\mu,Y)$ denote the set of all equivalence classes of Borel measurable functions from $(X,\mathcal{B})$ to $Y$ up to equality $\mu$-almost everywhere. For any compatible metric $d$ on $Y$, there is a metric $d_{\mu}^{0}$ on $L^{0}(\mu,Y)$ defined by \begin{displaymath}
	d_{\mu}^{0}(f,g) \, \defeq \, \inf \{ \varepsilon \in \R_{>0} \mid \mu(\{ x \in X \mid d(f(x),g(x)) > \varepsilon \}) \leq \varepsilon \}
\end{displaymath} for all $f,g \in L^{0}(\mu,Y)$. For any two compatible metrics $d$ and $\delta$ on $Y$, the two induced metrics $d_{\mu}^{0}$ and $\delta^{0}_{\mu}$ generate the same topology on $L^{0}(\mu,Y)$ (see~\cite[Proposition~6]{moore} and \cite[p.~6, Corollary]{moore}). We equip $L^{0}(\mu,Y)$ with the resulting topology, which is uniquely determined by $\mu$ and the topology of $Y$, and which is called the \emph{topology of convergence in measure with respect to $\mu$}. In case $d$ is a complete compatible metric on $Y$, the metric $d_{\mu}^{0}$ is complete as well. Thus, if $Y$ is Polish, then $L^{0}(\mu,Y)$ is completely metrizable. As established by Moore~\cite[Proposition~7]{moore}, if the $\sigma$-algebra $\mathcal{B}$ is countably generated and $Y$ is Polish, then $L^{0}(\mu,Y)$ is Polish. Suppose that $G$ is a second-countable topological group. Then $L^{0}(\mu,G)$, endowed with the pointwise multiplication (of representatives of equivalence classes) and the topology of convergence in measure with respect to $\mu$, constitutes a topological group. It is straightforward to verify that the map \begin{displaymath}
	G \, \longrightarrow \, L^{0}(\mu,G), \quad g \, \longmapsto \, \tilde{g}
\end{displaymath} with \begin{displaymath}
	\tilde{g} \, \equiv_{\mu} \, g \qquad (g \in G)
\end{displaymath} is a topological group embedding, and henceforth we will view $G$ as a topological subgroup of $L^{0}(\mu,G)$ via this embedding. Finally, we note that, if $d$ is a right-invariant (resp., left-invariant) compatible metric on $G$, then the corresponding compatible metric $d_{\mu}^{0}$ will be right-invariant (resp., left-invariant) on $L^{0}(\mu,G)$.

\begin{proposition}\label{proposition:l0} Let $(X,\mathcal{B},\mu)$ be a probability space and let $G$ be a second-countable topological group. The map \begin{displaymath}
	\varphi \colon \, L^{0}(\mu,G) \, \longrightarrow \, \Prob(G), \quad f \, \longmapsto \, f_{\ast}(\mu)
\end{displaymath} is both $G$-left- and $G$-right-equivariant. For any compatible bounded metric $d$ on $G$ and for all $f,g \in L^{0}(\mu,G)$, \begin{displaymath}
	d_{\mathrm{MT}}(\varphi(f),\varphi(g)) \, \leq \, (\diam (G,d) + 1)d_{\mu}^{0}(f,g) .
\end{displaymath} In particular, $G$ is both well-placed and skew-well-placed in $L^{0}(\mu,G)$. \end{proposition}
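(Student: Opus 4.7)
The proof naturally splits into three tasks: showing $\varphi$ is well defined and bi-equivariant, establishing the explicit Lipschitz bound, and then deducing the (skew-)well-placedness statements.

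For the first task, for any representative $f$ of a class in $L^{0}(\mu,G)$, the push-forward $f_\ast(\mu)$ is a Borel probability measure on $G$ which is independent of the chosen representative; inner regularity (i.e., membership in $\Prob(G)$) is automatic thanks to $G$ being second-countable and metrizable. Left-equivariance follows by observing that $\tilde g f$ represents the pointwise product $x \mapsto g \cdot f(x) = \lambda_{g}(f(x))$, so that $(\tilde g f)_\ast(\mu) = (\lambda_{g})_\ast f_\ast(\mu) = g \cdot \varphi(f)$; right-equivariance is the analogous computation using $\rho_{g}$.

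For the second task, I fix a compatible bounded metric $d$ on $G$ and take $h \in \Lip_{1}(G,d)$ and $f,g \in L^{0}(\mu,G)$. Then
\begin{displaymath}
    \lvert \varphi(f)(h) - \varphi(g)(h) \rvert \,\leq\, \int d(f(x),g(x)) \, \mathrm{d}\mu(x).
\end{displaymath}
For any $\varepsilon > d_{\mu}^{0}(f,g)$, the set $A \defeq \{ x \mid d(f(x),g(x)) > \varepsilon \}$ satisfies $\mu(A) \leq \varepsilon$ by definition of $d_{\mu}^{0}$. Splitting the integral at $A$ and bounding the integrand by $\diam(G,d)$ on $A$ and by $\varepsilon$ on $X \setminus A$ yields the upper bound $(\diam(G,d)+1)\varepsilon$; letting $\varepsilon \downarrow d_{\mu}^{0}(f,g)$ and taking the supremum over $h \in \Lip_{1}(G,d)$ closes the estimate.

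For the final task, since $G$ is second-countable Hausdorff, the Birkhoff--Kakutani theorem supplies a right-invariant (resp., left-invariant) compatible metric, which may be truncated at $1$ to be bounded without losing either property. If $d$ is right-invariant, then so is $d_{\mu}^{0}$ on $L^{0}(\mu,G)$, and hence $d_{\mu}^{0}$ generates the right uniformity of $L^{0}(\mu,G)$. By Remark~\ref{remark:mass.transportation}, the UEB uniformity on $\Meanu(G_{\Rsh})$ is the one induced by $d_{\mathrm{MT}}$, so the Lipschitz bound immediately makes $\varphi \colon L^{0}(\mu,G)_{\Rsh} \to \Meanu(G_{\Rsh})_{\UEB}$ uniformly continuous, witnessing well-placedness. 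Running the same argument with a left-invariant $d$ gives skew-well-placedness. The substantive step is the Lipschitz estimate; the only mildly delicate point is the matching of the right/left uniformities on $L^{0}(\mu,G)$ with those generated by $d_{\mu}^{0}$ for right-/left-invariant $d$, a standard feature of $L^{0}$-groups.
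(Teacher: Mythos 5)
Your proposal is correct and follows essentially the same route as the paper's proof: the identical splitting of the integral over the exceptional set $\{x \mid d(f(x),g(x))>\varepsilon\}$ to get the $(\diam(G,d)+1)$-Lipschitz bound, and the same deduction of (skew-)well-placedness by feeding a bounded right-invariant (resp.\ left-invariant) compatible metric into this estimate and invoking Remark~\ref{remark:mass.transportation} together with the invariance of $d_{\mu}^{0}$ on $L^{0}(\mu,G)$. The extra details you supply on well-definedness and equivariance of $\varphi$ are fine and only elaborate what the paper dismisses as easy.
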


\begin{proof} It is easy to see that $\varphi$ is well defined and both $G$-left- and $G$-right-equivariant. Now, let $d$ be a compatible bounded metric on $G$ and let $f,g \in L^{0}(\mu,G)$. We show that \begin{displaymath}
	d_{\mathrm{MT}}(\varphi(f),\varphi(g)) \, \leq \, (\diam (G,d) + 1)d_{\mu}^{0}(f,g) ,
\end{displaymath} i.e., \begin{displaymath}
	\forall \varepsilon \in \R_{>0} \colon \quad d_{\mu}^{0}(f,g) < \varepsilon \ \Longrightarrow \ d_{\mathrm{MT}}(\varphi(f),\varphi(g)) \leq (\diam (G,d) + 1)\varepsilon .
\end{displaymath} To this end, let $\varepsilon \in \R_{>0}$ with $d_{\mu}^{0}(f,g) < \varepsilon$. Then $\mu(B) \leq \varepsilon$ for \begin{displaymath}
    B \, \defeq \, \{ x \in X \mid d(f(x),g(x)) > \varepsilon \} .
\end{displaymath} For all $h \in \Lip_{1}(G,d)$, we see that \begin{align*}
	&\left\vert \int h \, \mathrm{d}\varphi(f) - \int h \, \mathrm{d}\varphi(g) \right\vert \, = \, \left\vert \int h \circ f \, \mathrm{d}\mu - \int h \circ g \, \mathrm{d}\mu \right\vert \\
	& \qquad \leq \, \int \left\vert (h \circ f) - (h \circ g) \right\vert \, \mathrm{d}\mu \\
	& \qquad = \, \int_{B} \left\vert (h \circ f) - (h \circ g) \right\vert \, \mathrm{d}\mu + \int_{X \setminus B} \left\vert (h \circ f) - (h \circ g) \right\vert \, \mathrm{d}\mu \\
	&\qquad \leq \, \diam(G,d) \varepsilon + \varepsilon \, = \, (\diam(G,d) + 1) \varepsilon .
\end{align*} Hence, $d_{\mathrm{MT}}(\varphi(f),\varphi(g)) \leq (\diam(G,d) + 1)\varepsilon$ as desired. Applying the established inequality to any right-invariant (resp., left-invariant) compatible bounded metric on $G$ and using $G$-left-equivariance of $\varphi$ along with Remark~\ref{remark:mass.transportation}, we conclude that $G$ is both well-placed and skew-well-placed in $L^{0}(\mu,G)$. \end{proof}

Let us point out a note-worthy consequence of the result above. Let $G$ be a second-countable topological group and let $(X,\mathcal{B},\mu)$ be a probability space. Of course, if $G$ is amenable, then by combining density of the subgroup of ($\mu$-equivalence classes of) $G$-valued $\mathcal{B}$-simple functions on $X$ in $L^{0}(\mu,G)$ with some well-known closure properties of the class of amenable topological groups (see~\cite[4.5--4.8]{MR222208}) we see that $L^{0}(\mu,G)$ is amenable, too. Conversely, if $L^{0}(\mu,G)$ is amenable, then Proposition~\ref{proposition:l0} and Theorem~\ref{theorem:amenable.actions.for.well.placed.subgroups} together imply amenability of $G$. This result was first established in~\cite[Theorem~1.1, (2)$\Rightarrow$(1)]{PestovSchneider}.

\section*{Appendix}
Here, we give a proof of Proposition~\ref{proposition:ad.definition}. 
We need two lemmas. 
\begin{lemma}\label{lem Ch is conti}
Let $h\in C_c(X\times G)_{\ge 0}$. Then the map $C\colon X\ni x\mapsto \int_Gh^xd\nu\in \R$ is continuous. Here, $h^x(g)=h(x,g)$ for $(x,g)\in X\times G$. 
\end{lemma}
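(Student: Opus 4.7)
The plan is to exploit the compact support of $h$ to reduce the problem to a uniform continuity statement on a compact product space. First, let $L \defeq \supp(h)$, which is compact in $X \times G$ by assumption, and set $K \defeq \pi_{G}(L)$, where $\pi_{G} \colon X \times G \to G$ is the projection; then $K$ is a compact subset of $G$, so $\nu(K) < \infty$, and $h(x,g) = 0$ whenever $g \notin K$. The restriction $h|_{X \times K}$ is a continuous function on the compact Hausdorff space $X \times K$, hence uniformly continuous with respect to its unique uniformity.

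Now fix $x_{0} \in X$ and $\varepsilon \in \R_{>0}$. By the uniform continuity just noted, there exists a neighborhood $U$ of $x_{0}$ in $X$ such that for all $x \in U$ and all $g \in K$ one has
\begin{equation*}
    \lvert h(x,g) - h(x_{0},g) \rvert \, \leq \, \tfrac{\varepsilon}{\nu(K) + 1} .
\end{equation*}
Because the integrand in $C(x) - C(x_{0}) = \int_{G} (h^{x} - h^{x_{0}}) \, \mathrm{d}\nu$ is supported in $K$, it follows that
\begin{equation*}
    \lvert C(x) - C(x_{0}) \rvert \, \leq \, \int_{K} \lvert h(x,g) - h(x_{0},g)\rvert \, \mathrm{d}\nu(g) \, \leq \, \tfrac{\varepsilon \, \nu(K)}{\nu(K) + 1} \, \leq \, \varepsilon
\end{equation*}
for every $x \in U$, yielding continuity of $C$ at $x_{0}$.

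There is no genuine obstacle here; this is a routine Haar-integral continuity argument. The only point requiring minor care is the reduction to a compact product via $\supp(h) \subseteq X \times K$, which ensures both that $h|_{X \times K}$ is uniformly continuous and that the Haar mass of the relevant region is finite, so that the uniform bound integrates to something small.
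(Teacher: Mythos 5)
Your proof is correct and follows essentially the same route as the paper's: both reduce to uniform continuity of $h$ on the compact product of $X$ with the (compact) projection of $\supp(h)$ to $G$, and then integrate the resulting uniform estimate against the finite Haar mass of that projection. Your version is simply a streamlined form of the paper's net-and-finite-subcover argument, dispensing with its (unnecessary) case distinction according to whether $C(x)=0$; the only implicit hypothesis you use is compactness of $X$, which holds in the context where the lemma is applied.
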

\begin{proof} Let $(x_j)_{j\in J}$ be a net in $X$ converging to $x\in X$. 
    Let $K=\overline{{\rm spt}(h)}$, $L={\rm Proj_G}(K)$, where ${\rm Proj}_G\colon X\times G\ni (x,g)\mapsto g\in G$ is the projection map. Then $K,L$ are compact and $C(y)=\int_Lh(y,g)d\nu(g),\,y\in X$.
    Assume first that $(x,g)\notin K$ for any $g\in L$. 
    Then $C(x)=0$. For each $g\in L$, there exists open neighborhoods $V_g'$ of $x$ in $X$ and $W_g'$ of $g$ in $L$ such that $V_g'\times W_g'\subset X\times L\setminus K$. By the compactness of $L$, there exists a finite set $F\subset L$ for which $L=\bigcup_{g\in F}W_g'$. 
    Set $V'=\bigcap_{g\in F}V_g'$, which is an open neighborhood of $x$ in $X$. 
    Since $x_j\to x$, there exists $j_0\in K$ such that $x_j\in V'$ for $j\ge j_0$. 
    Then $h(x_j,g)=0$ for every $j\ge j_0$ and $g\in L$, whence $C(x_j)=0$. This shows that $\lim_{j \to J} C(x_j) = C(x)$.  

Next, assume that $(x,g)\in K$ for some $g\in L$, and let $\varepsilon>0$. 
    Since $h$ is uniformly continuous on $X\times L$, there exists an open neighborhood $U$ of the diagonal $\Delta_{X\times L}$ in $(X\times L)^2$, such that $|h(y',g')-h(y,g)|<\varepsilon$ for every $((y',g'),(y,g))\in U$.  Let $(y,g)\in X\times L$. Then $((y,g),(y,g))\in \Delta_{X\times L}$, and there exist open neighborhoods $V_{y,g}$ of $y$ in $X$, $W_{y,g}$ of $g$ in $L$, such that $(V_{y,g}\times W_{y,g})^2\subset U$ holds. By compactness, there exists a finite set $F=\{z_1,\dots,z_m\}\subset K\times L$, $z_m=(y_m,g_m)$, such that $X\times L=\bigcup_{k=1}^mV_{z_k}\times W_{z_k}$. Let 
\[I=\{k\in \{1,\dots,m\}\mid \exists g\in L\,{\rm s.t.}\,(x,g)\in V_{z_k}\times W_{z_k}\}\]
Then $I\neq \emptyset$ by assumption. Since $x_j\to x$, there exists $j_0\in J$ such that for every $j\ge j_0$ and for every $k\in I$, $x_j\in V_{z_k}$ holds. 
Now let $g\in L$ be arbitrary. Then 
$(x,g)\in V_{z_k}\times W_{z_k}$ for some $k\in I$. Then we have 
\[((x_j,g),(x,g))\in (V_{z_k}\times W_{z_k})^2\subset U,\]
whence 
\[|h(x_j,g)-h(x,g)|<\varepsilon.\]
Since $g\in L$ is arbitrary, we obtain that $|C(x_j)-C(x)|\le \varepsilon \nu(L)$ for all $j \in J$ with $j \geq j_0$. This shows that $\lim_{j \to J} C(x_j)=C(x)$. Therefore, $C$ is continuous. \end{proof}

\begin{lemma}\label{lem weakstarandUEB}
Let $G$ be a locally compact group, $X$ be a compact space. 
Let $\mu\colon X\to \Prob(G,\nu)$ be a map such that 
$\bigcup_{x\in X}\spt(\mu(x))$ is relatively compact. 
Then the following two conditions are equivalent. 
\begin{list}{}{}
\item[{\rm (i)}] $\mu\colon X\to \Prob(G,\nu)$ is $\sigma(C_0(G)^*,C_0(G))$-continuous. 
\item[{\rm (ii)}] $\Xi_{\nu}^{\Lsh}(\mu)\colon X\ni x\mapsto \Xi_{\nu}^{\Lsh}(\mu(x))\in \Mean_u(G_{\Lsh})$ is {\rm UEB}-continuous. 
\end{list}
\end{lemma}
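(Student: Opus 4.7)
The plan is to treat the two implications separately, with the substance lying in (i)$\Rightarrow$(ii). For (ii)$\Rightarrow$(i), every $f \in C_c(G)$ belongs to $\LUCB(G)$, so the singleton $\{f\}$ is an element of $\LUEB(G)$, and UEB-continuity of $\Xi_\nu^\Lsh \circ \mu$ immediately yields continuity of the map $x \mapsto \int_G f(g)\mu(x)(g)\,d\nu(g)$ for every such $f$. Because $\lVert \mu(x) \rVert_{\nu,1} = 1$ uniformly in $x$, the bound $\lvert\int_G f(g)\mu(x)(g)\,d\nu(g)\rvert \leq \lVert f\rVert_\infty$ lets density of $C_c(G)$ in $C_0(G)$ upgrade the continuity to all $f \in C_0(G)$, which is condition (i).

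For the serious direction (i)$\Rightarrow$(ii), fix $B \in \LUEB(G)$, $x_0 \in X$, and $\varepsilon > 0$, and let $K \subseteq G$ be a compact set containing $\bigcup_{x \in X} \spt(\mu(x))$. The first move is an Arzela-Ascoli argument on $K$: the restrictions $\{f|_K : f \in B\}$ are uniformly bounded and equicontinuous on $K$, because the left uniformity on $G$ induces its topology and therefore restricts to the unique compatible uniformity on the compact Hausdorff subspace $K$, so left uniform equicontinuity of $B$ on $G$ yields ordinary equicontinuity of $B|_K$. Hence $\{f|_K\}_{f \in B}$ is totally bounded in $C(K)$, and one may choose $f_1,\ldots,f_n \in B$ such that every $f \in B$ satisfies $\sup_K|f - f_j| < \varepsilon/3$ for some $j$. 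I would then pick $\chi \in C_c(G)$ with $0 \leq \chi \leq 1$ and $\chi \equiv 1$ on $K$, so that each $\chi f_j$ lies in $C_c(G) \subseteq C_0(G)$ and, by the support condition, $\int (\chi f_j)(g)\mu(y)(g)\,d\nu(g) = \int f_j(g)\mu(y)(g)\,d\nu(g)$ for every $y \in X$. Hypothesis (i) then supplies a neighborhood $V$ of $x_0$ on which $\lvert \int (\chi f_j)(g)(\mu(y)(g)-\mu(x_0)(g))\,d\nu(g) \rvert < \varepsilon/3$ for each $j$. Combining these estimates with the fact that $\mu(y)$ and $\mu(x_0)$ are supported in $K$, I would obtain
\[
\sup\nolimits_{f \in B} \left\lvert \int_G f(g)\bigl(\mu(y)(g)-\mu(x_0)(g)\bigr) d\nu(g) \right\rvert \,<\, \varepsilon \qquad (y \in V),
\]
which is the UEB-continuity of $\Xi_\nu^\Lsh \circ \mu$ at $x_0$.

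The chief obstacle will be the Arzela-Ascoli step: cleanly turning left uniform equicontinuity of $B \subseteq \ell^\infty(G)$ into standard equicontinuity of $B|_K$ on the compact topological space $K$. Once this identification is in hand, the remaining estimates reduce to routine bookkeeping with the $L^1$-bound on the measures and the support condition.
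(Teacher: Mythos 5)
Your argument is correct, and the backward direction is essentially the paper's (the paper notes $C_0(G)\subset\LUCB(G)$ and uses that $\langle\Xi_\nu^{\Lsh}(\mu(x)),f\rangle=\langle\mu(x),f\rangle$ for $f\in C_0(G)$; your variant via $C_c(G)$ singletons plus a density/$3\varepsilon$ upgrade using $\lVert\mu(x)\rVert_{\nu,1}=1$ works just as well). Where you genuinely diverge is in (i)$\Rightarrow$(ii). The paper uses the same cutoff function $\chi\in C_c(G)$ with $\chi\equiv 1$ on $K$ to write $\langle\Xi_\nu^{\Lsh}(\mu(x)),f\rangle=\langle\mu(x),\chi f\rangle$ for each single $f\in\LUCB(G)$, which gives pointwise weak-$\ast$ continuity of $x\mapsto\Xi_\nu^{\Lsh}(\mu(x))$, and then concludes UEB-continuity by invoking the fact that on $\Meanu(G_{\Lsh})$ the UEB topology coincides with the weak-$\ast$ topology (Remark~\ref{remark:weak.star.vs.ueb.topology}, i.e.\ Pachl's result); no uniformity over a UEB set is checked directly. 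You instead establish the uniformity over $B\in\LUEB(G)$ by hand: left uniform equicontinuity of $B$ restricts to ordinary equicontinuity on the compact set $K$ (indeed, for $x\in K$ the neighborhood $xU\cap K$ does the job directly, so the uniqueness-of-uniformity remark is not even needed), Arzel\`a--Ascoli gives a finite $\varepsilon/3$-net $f_1,\dots,f_n$ in $C(K)$, and the support condition plus $\lVert\mu(y)\rVert_{\nu,1}=1$ lets you transfer the estimate from the net to all of $B$. The trade-off: the paper's route is shorter but leans on the nontrivial coincidence of the two topologies on uniform measures, whereas your argument is self-contained modulo Arzel\`a--Ascoli and makes the passage from "continuity against each $C_0$ function" to "uniform continuity over a UEB family" explicit. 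Both proofs hinge on the same two ingredients -- the cutoff $\chi$ and the common compact support $K$ -- so the approaches are compatible, and yours is a valid, slightly more elementary alternative.
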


\begin{proof} (ii)$\implies$(i) follows from the inclusion $C_0(G)\subset \UCB(G_{\Lsh})$, the equality $\ip{\Xi_{\nu}^{\Lsh}(\mu(x))}{f}=\ip{\mu(x)}{f}$ for $f\in C_0(G)$ and the fact that the UEB-topology and the $\sigma(\UCB(G_{\Lsh})^*,\UCB(G_{\Lsh}))$-topology coincide on $\Mean_u(G_{\Lsh})$.

(i)$\implies$(ii) Let $K=\overline{\bigcup_{x\in X}\spt(\mu(x))}$, which is a compact subset of $G$. By Urysohn's lemma, we can choose $\chi\in C_c(G)^+$, $0\le \chi\le 1$, such that $\chi|_K=1$. Then for $f\in \UCB(G_{\Lsh})$, we have 
\eqa{
    \ip{\Xi_{\nu}^{\Lsh}(\mu(x))}{f}&=\int_K\mu(x)(g)f(g)d\nu(g)\\
    &=\int_G\mu(x)(g)\chi(g)f(g)d\nu(g)\\
    &=\ip{\mu(x)}{\chi\cdot f},
}
because $\chi\cdot f\in C_0(G)$. This implies that $x\mapsto \ip{\Xi_{\nu}^{\Lsh}(\mu(x))}{f}$ is continuous, which implies (ii). \end{proof}

\begin{proof}[Proof of Proposition~\ref{proposition:ad.definition}] 
Suppose that the action is AD-amenable. Then by \cite[Proposition 2.2]{AD23amenabilityexactness}, there exists a net $(h_{\iota})_{\iota\in I}$ in $C_c(X\times G)_{\ge 0}$ such that 
\begin{list}{}{}
\item[(a)] $\lim_{\iota \to I }\int_G h_{\iota}^xd\nu=1$ uniformly in $x\in X$. 
\item[(b)] $\lim_{\iota \to I}\sup_{x\in X}\|h_{\iota}^{gx}-gh_{\iota}^x\|_{\nu,1}=0$ compact uniformly in $g\in G$. 
\end{list}
Here, $h_{\iota}^x\in \Prob(G,\nu)$ is defined by $h_{\iota}^x(g)=h_{\iota}(x,g),\, x\in X,\,g\in G$. 
By (a) we may assume that $C_{\iota}^x=\int_Gh_{\iota}^xd\nu>0$ for all $\iota,x$. 

For each $x\in X$ define $\mu_{\iota}(x)\in C_c(G)_{\ge 0}$ by the formula
\[\mu_{\iota}(x)(g)=\frac{h_{\iota}^x(g)}{\int_Gh_{\iota}^xd\nu},\,\,g\in G.\]
Then by Lemma \ref{lem weakstarandUEB}, 
the map $X\ni x\mapsto \Xi_{\nu}^{\Lsh}(\mu_{\iota}(x))\in \Mean_u(G_{\Lsh})$ is UEB-continuous. By the same argument, the map $X\ni x\mapsto \Xi_{\nu}^{\Rsh}(\mu_{\iota}(x))\in \Mean_u(G_{\Rsh})$ is also UEB-continuous.
\end{proof}

\section*{Acknowledgments}
H.A.\ is supported by Japan Society for the Promotion of Sciences (JSPS) KAKENHI 20K03647. A.T.\ thanks Nicolas Monod for inspiring discussions and sharing of Definition \ref{def:amenemb} at the start of the project. Moreover, the authors express their sincere gratitude towards the anonymous referee, in particular for suggesting Question~\ref{question:extension}(1).

\bibliographystyle{plain}

\end{document}